 \numberwithin{equation}{section}
\newtheoremstyle{plainNoItalics}{}{}{\normalfont}{}{\bfseries}{.}{ }{}
\theoremstyle{plain}
\newtheorem{thm}{Theorem}[section]
\theoremstyle{plainNoItalics}
\newtheorem{rem}[thm]{Remark}
\newtheorem{prop}[thm]{Proposition}
\newtheorem{exa}[thm]{Example}
\newcommand{\bx}{{\bf x}}
\newcommand{\bv}{{\bf v}}
\newcommand{\bw}{{\bf w}}
\newcommand{\bE}{{\bf E}}
\newcommand{\bJ}{{\bf J}}
\newcommand{\bU}{{\bf U}}
\newcommand{\ijindex}[1]{{#1}_{i,j}^{ig,jg}}
\newcommand{\beq}{\begin{equation}}
\newcommand{\eeq}{\end{equation}}
\newcommand{\bit}{\begin{itemize}}
\newcommand{\eit}{\end{itemize}}
\newcommand{\be}{\begin{eqnarray}}
\newcommand{\ee}{\end{eqnarray}}
\newcommand{\beno}{\begin{eqnarray*}}
\newcommand{\eeno}{\end{eqnarray*}}
\newcommand\QQ[1]{\textcolor{blue}{#1}}
\newcommand{\Rmnum}[1]{\expandafter\@slowromancap\romannumeral #1@}
\begin{document}

\baselineskip=1.8pc


\begin{center}
{\bf
A Local Macroscopic Conservative (LoMaC) low rank tensor method with the discontinuous Galerkin method for the Vlasov dynamics
}
\end{center}

\vspace{.2in}
\centerline{
 Wei Guo,\footnote{
Department of Mathematics and Statistics, Texas Tech University, Lubbock, TX, 70409. E-mail:
weimath.guo@ttu.edu. Research is supported by NSF grant NSF-DMS-1830838 and NSF-DMS-2111383, Air Force Office of Scientific Research FA9550-22-1-0390.
} 
Jannatul Ferdous Ema, \footnote{Department of Mathematics and Statistics, Texas Tech University, Lubbock, TX, 70409. E-mail: Jannatul-Ferdous.Ema@ttu.edu.}
and 
Jing-Mei Qiu\footnote{Department of Mathematical Sciences, University of Delaware, Newark, DE, 19716. E-mail: jingqiu@udel.edu. Research supported by NSF grant NSF-DMS-1818924 and 2111253, Air Force Office of Scientific Research FA9550-22-1-0390.}
}

\bigskip
\noindent
{\bf Abstract.} In this paper, we propose a novel Local Macroscopic Conservative (LoMaC) low rank tensor method with discontinuous Galerkin (DG) discretization for the physical and phase spaces
for simulating the Vlasov-Poisson (VP) system. The LoMaC property refers to the exact local conservation of macroscopic mass, momentum and energy at the discrete level. The recently developed LoMaC low rank tensor algorithm (arXiv:2207.00518) simultaneously evolves the macroscopic conservation laws of mass, momentum and energy using the kinetic flux vector splitting; then the LoMaC property is realized by projecting the low rank kinetic solution onto a subspace that shares the same macroscopic observables. 

This paper is a generalization of our previous work, but with DG discretization to take advantage of its compactness and flexibility in handling boundary conditions and its superior accuracy in the long term. The algorithm is developed in a similar fashion as that for a finite difference scheme, by observing that the DG method can be viewed equivalently in a nodal fashion. With the nodal DG method, assuming a tensorized computational grid, one will be able to (1) derive differentiation matrices for different nodal points based on a DG upwind discretization of transport terms, and (2) define a weighted inner product space based on the nodal DG grid points. The algorithm can be extended to the high dimensional problems by hierarchical Tucker decomposition of solution tensors and a corresponding conservative projection algorithm. In a similar spirit, the algorithm can be extended to DG methods on nodal points of an unstructured mesh, or to other types of discretization, e.g. the spectral method in velocity direction.  Extensive numerical results are performed to showcase the efficacy of the method. 

\noindent  
{\bf Key Words:} Hierarchical Tucker decomposition; conservative SVD; energy conservation; the discontinuous Galerkin method.

\section{Introduction}

Numerical simulation of the Vlasov-Poisson (VP) system plays a fundamental role in understanding complex dynamics of plasma and has a wide range of applications in science and engineering, such as fusion energy. The well-known challenges for VP simulations include the high dimensionality of the phase space, resolution of multiple scales in time and in phase space, preservation of physical invariants, among many others. In this paper, we develop a novel Local Macroscopic Conservative (LoMaC) low rank tensor method with discontinuous Galerkin (DG) discretization. The LoMaC property means that the algorithm can conserve locally densities of macroscopic observables at the discrete level.

This paper is a generalization of LoMaC low rank tensor method with finite difference discretization in \cite{guo2022local}. In the introduction of  \cite{guo2022local},  we have discussed the application background and existing works on low rank approach for time-dependent dynamics. Below we only highlight several key ingredients to realize accuracy, robustness, computational efficiency and local conservation for macroscopic observables of the newly proposed algorithm. 
\begin{enumerate}
\item {\em Low rank representation of solutions and high order discretizations \cite{guo2022low}.} In this low rank approach, the solution is being written in the form of Schmidt decomposition, where the basis in each dimension are being dynamically updated from a high order discretization of PDEs together with a singular value type truncation for sparsity in function representation and efficiency for computational complexity. The original idea is presented in  \cite{guo2022low}. In this paper, we generalize the algorithm to nodal DG type spatial discretization on tensor product of computational meshes. The nodal DG differentiation operator, as well as the weights in the discrete inner product space, will depends on the mesh spacing and the associated Gaussian quadrature nodes in each computational cell. The new method allows the flexibility in mesh spacing, e.g. using a not smooth nonuniform mesh, yet achieves high order spatial accuracy. Meanwhile the method take advantages of the compactness of the DG discretization in boundary treatment. With the weighted inner product space, we perform a scaling procedure, followed by a standard SVD truncation, and finished by a rescaling procedure to remove redundancy for data sparsity.  For time discretization, we apply the strong-stability-preserving (SSP) multi-step time discretizations \cite{gottlieb2011strong}. 
\item {\em Simultaneous update of macroscopic mass, moment and energy in a locally conservative manner.} This step is the key novelty in \cite{guo2022local} in locally preserving mass, momentum and even energy in an explicit scheme. In this paper, we use a nodal DG scheme for macroscopic conservation laws, with the numerical fluxes from taking moment integration of kinetic probability density functions via the kinetic flux vector splitting (KFVS) fluxes \cite{mandal1994kinetic, xu1995gas}. Meanwhile, the updated macroscopic mass, momentum and energy are used to correct the kinetic solutions via a macroscopic conservative projection. Figure~\ref{f1} from \cite{guo2022local} shows the interplay between numerical solutions for kinetic model and the corresponding macroscopic system. The kinetic solution $f$ is used as the kinetic flux to advance solutions for macroscopic systems, while the updated macroscopic mass, momentum and energy are used to perform a conservative correction to kinetic solution $f$ via a macroscopic conservative projection. 
\begin{figure}[h!]
	\centering
	        {\includegraphics[height=20mm]{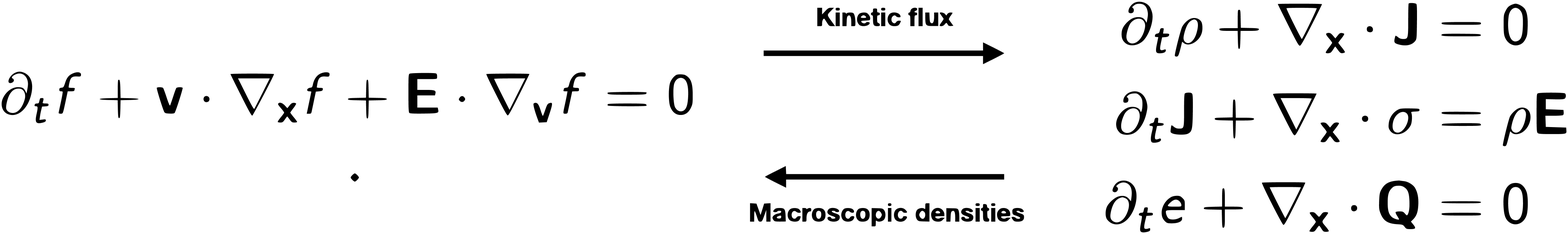}}
	        \caption{Illustration of LoMaC scheme.}
\label{f1}
\end{figure}
The newly developed low rank DG algorithm is theoretically proved and numerically verified to be locally mass, momentum and energy conservative. 
\item {\em Hierarchical Tucker (HT) representation of high dimensional tensors.} We further generalize the algorithm to high-dimensional problems with the HT decomposition, which attains a storage complexity that is linearly scaled with the dimension and polynomial scaled with the rank, mitigating the curse of dimensionality. 
The HT format \cite{hackbusch2009new,grasedyck2010hierarchical} is motivated by the classical Tucker format \cite{tucker1966some,de2000multilinear}, but considering a dimension tree and taking advantage of the hierarchy of the nested subspaces. A hierarchical high order singular value decomposition (HOSVD)\cite{hackbusch2009new,grasedyck2010hierarchical} can be performed to strike a balance between data complexity and numerical feasibility. In this paper, we use the same dimension tree as in our earlier work \cite{guo2022local} for 2D2V Vlasov system, with full rank in the physical spaces, and low rank in velocity spaces, and low rank between physical and velocity spaces. 
\end{enumerate}

As far as we are aware of, this paper is a first paper on coupling the DG discretization with the low rank tensor framework for kinetic simulations. It well combines the merits of DG discretization with that of low rank tensor approach: for the DG method in flexibility and robustness in using nonuniform or unstructured meshes, in treating complex boundary conditions and in realizing superconvergence properties in a long time simulation, and for the low rank tensor approach in reducing computational complexity. Although we haven't extended the algorithm to unstructured triangular meshes or for complex boundary conditions here, this paper serves as a first step in this direction, and shows the proof of concept on the potential of the algorithm for complex and high dimensional problems. 

This paper is organized as follows. In Section 2, we introduce the kinetic Vlasov model and the corresponding macroscopic conservation laws. Section 3 is the main section to introduce the proposed algorithm. We introduce the DG and nodal DG discretization in Section 3.1;  we discuss the low rank framework with tensor product of nodal DG meshes, the weighted inner product spaces, and the corresponding macroscopic conservative projection and weighted SVD truncation in Section 3.2; we propose the LoMaC low rank DG algorithm in Section 3.3 with remarks on further generalization of the algorithm to high dimensional problems with HT format and to unstructured meshes. In Section 4, we present numerical results on an extensive set of 1D1V and 2D2V problems to demonstrate the efficacy the proposed algorithm.  We conclude in Section 5.


\newcommand{\xL}{{x_{i-\frac{1}{2}}}}
\newcommand{\xR}{{x_{i+\frac{1}{2}}}}
\newcommand{\vL}{{v_{j-\frac{1}{2}}}}
\newcommand{\vR}{{v_{j+\frac{1}{2}}}}
\newcommand{\iL}{{i-\frac{1}{2}}}
\newcommand{\iR}{{i+\frac{1}{2}}}
\newcommand{\shalf}{\tiny{\frac12}}

\section{The kinetic Vlasov model and the corresponding macroscopic systems}
In this section, we introduce the Vlasov model and the corresponding macroscopic systems. 
We consider the dimensionless VP system 
\beq
\frac{\partial f}{\partial t}  
+  {\bf{v}} \cdot \nabla_{\bf{x}}  f 
+ {\bf{E}} ({\bf{x}},t) \cdot \nabla_{\bf{v}}  f = 0,
\label{vlasov1}
\eeq
\beq
 {\bf E}( {\bf x},t) = - \nabla_{\bf x} \phi({\bf x},t),  \quad -\triangle_{\bf x} \phi ({\bf x},t) = {{\bf \rho} ({\bf x},t)} - \rho_0,
\label{poisson}
\eeq
which describes the dynamics of  the probability distribution function $f({\bf x}, {\bf v},t)$ of electrons in a collisionless plasma. 
Here ${\bf E}$ is the electric field and $\phi$ is the self-consistent electrostatic potential  determined by  Poisson's equation. $f$ couples to the long range fields via the charge density ${\bf \rho}({\bf x},t) = \int_{\Omega_{\bv}} f({\bf x}, {\bf v},t) d {\bf v}$, where we take the limit of uniformly distributed infinitely massive ions in the background.  

The Vlasov dynamics are well-known to conserve several physical invariants. In particular, let 
\be
\label{eq: mass_d}
\mbox{charge density:}&& \rho (\bx, t) = \int_{\Omega_{\bv}} f(\bx, \bv,t) d \bv, \\
\label{eq: current_d}
\mbox{current density:} &&\bJ (\bx, t) = \int_{\Omega_{\bv}}f(\bx, \bv,t) \bv d \bv,\\
\label{eq: kenergy_d}
\mbox{kinetic energy density:} && \kappa(\bx,t) = \frac{1}{2} \int_{\Omega_{\bv}} |\bv|^{2}  f(\bx, \bv,t) d \bv,\\
\label{eq: energy_d}
\mbox{energy density:} && e(\bx,t)=\kappa(\bx,t)+\frac{1}{2} \bE(\bx)^2.
\ee
Then, by taking the first few moments of the Vlasov equation,
the following conservation laws of mass, momentum and energy can be derived
\begin{align}
\partial_{t} \rho + \nabla_\bx \cdot \bJ &= 0\label{eq:mass}\\
\partial_{t} \bJ +\nabla_{\bx} \cdot \mathbf{\sigma}&= \rho\bE \label{eq:mom}\\
\partial_{t} e +\nabla_{\bx} \cdot \mathbf{Q}& =0,\label{eq:ener} 
\end{align}
where $ \sigma(t, \bx)=\int_{\Omega_{\bv}}(\bv \otimes \bv) f(\bx, \bv,t) d \bv$ and $\mathbf{Q}(\bx,t) =\frac12\int_{\Omega_{\bv}}\bv|\bv|^2 f(\bx, \bv,t) d \bv$. 
It is well-known that local conservation property is essential to capture correct entropy solutions of hyperbolic systems such as \eqref{eq:mass}-\eqref{eq:ener}. 

\section{A LoMaC low rank tensor approach with DG discretizations for the Vlasov dynamics}

For simplicity of illustrating the basic idea, we only discuss a 1D1V example in this section. The low rank tensor approach \cite{guo2021lowrank} is designed based on the assumption that our solution at time $t$ has a low rank representation in the form of 
\begin{equation}
\label{eq: fn1}
f(x, v, t) = \sum_{l=1}^{r} \left(C_l(t) \  U_l^{(1)}(x, t) U_l^{(2)}(v, t)\right),
\end{equation}
where $\left\{U_l^{(1)}(x, t)\right\}_{l=1}^{r}$ and $\left\{U_l^{(2)}(v, t)\right\}_{l=1}^{r}$ are a set of time-dependent low rank orthonormal  basis in $x$ and $v$ directions, respectively, $C_l$ is the coefficient for the basis $U_l^{(1)}(x, t)U_l^{(2)}(v, t)$, and $r$ is the representation rank.  \eqref{eq: fn1} can be viewed as a Schmidt decomposition of functions in $(x, v)$ by truncating small singular values up to rank $r$. 

\subsection{DG discretization with nodal Lagrangian basis functions.}

We perform a DG discretization with a piecewise $Q^k$ polynomial space for $f$ on a truncated 1D1V domain of $\Omega = [x_{\min}, x_{\max}] \times [-v_{\max}, v_{\max}]$. 
We start with a tensor product Cartesian partition of $\Omega$ denoted by $\Omega_h$ with
$$x_{\min}=x_{\frac12}<x_{\frac{3}{2}}<\cdots <x_{N_x+\frac12}=x_{\max},$$ 
$$-v_{\max}=v_{\frac12}<v_{\frac{3}{2}}<\cdots <v_{N_v+\frac12}=v_{\max}.$$
 Denote an element as $I_{ij}=[\xL, \xR]\times[\vL, \vR]\in \Omega_h$  with element size $h_{x, i} h_{v, j}$ and the center $x_{i} = \frac12(x_{i-\frac12}+x_{i+\frac12})$ and $v_{j} = \frac12(v_{j-\frac12}+v_{j+\frac12})$ . Let $h_x=\max_{i=1}^{N_x}h_{x, i}$ and $h_v=\max_{j=1}^{N_v}h_{v, j}$. Given any non-negative integer $k$, we define a finite dimensional discrete space with piecewisely defined $Q^k$ polynomials, 
\begin{equation}
Q_h^k=\left\{p(x, v)\in L^2(\Omega): p|_{I_{ij}}\in Q^k(I_{ij}),\, \forall I_{ij}\in \Omega_h \right\}.
\label{eq:DiscreteSpace}
\end{equation}
The local space $Q^k(I)$ consists of polynomials with terms in the form of $x^m v^n$ with $\max(m, n)\le k$ on $I\in\Omega_h$.
To distinguish the left and right limits of a function $p\in Q_h^k$ at $(x_{i+\frac{1}{2}}, v)$, \QQ{we let
$p_{i+\frac{1}{2}, v}^\pm=\lim_{\delta \rightarrow \pm 0}p(x_{i+\frac{1}{2}}+\delta, v)$.}

A semi-discrete DG method for the Vlasov equation \eqref{vlasov1} is: find $f_h(\cdot, \cdot, t)\in {Q}_h^k$ , such that
$\forall \phi \in Q_h^k$ and $\forall I_{ij}\in\Omega_h$,
\begin{align}
\label{eq:DG}
\int_{I_{ij}} \partial_t f_h \phi dxdv &= \int_{I_{ij}} v f_h \phi_x dxdv - \int_\vL^{\vR} v(\hat{f}_{i+\frac{1}{2}, v} \phi^-_{i+\frac{1}{2}, v}- \hat{f}_{i-\frac{1}{2}, v} \phi^+_{i-\frac{1}{2}, v}) dv \\
& + \int_{I_{ij}} E f_h \phi_v dxdv - \int_\xL^{\xR} E(x) (\hat{f}_{x, j+\frac{1}{2}} \phi^-_{x, j+\frac{1}{2}}- \hat{f}_{x, j-\frac{1}{2}} \phi^+_{x, j-\frac{1}{2}}) dx.\notag
\end{align}

To implement the DG scheme under the low rank framework, we use the nodal basis to represent functions in the discrete space $Q_h^k$,
in conjunction with rewriting and/or approximating the integrals in the schemes by numerical quadratures.
We consider a reference cell $I=[-\frac12, \frac12]\times[-\frac12, \frac12]$ and the tensor product of Gaussian quadrature points in each direction $\{\xi_{ig},\eta_{jg}\}_{ig, jg=0}^{k}$. We further let $\{\omega_l\}^k_{l=0}$ denote the corresponding quadrature weights on the reference element. 
The local nodal Lagrangian basis on the reference cell is $\{L_{ig, jg}(\xi, \eta)\}_{ig, jg=0}^{k}$ in $Q^k(I)$ with 
\begin{equation}
L_{ig, jg} (\xi_{ig'},\eta_{jg'})=\delta_{ig, ig'}\delta_{jg, jg'},\quad ig, ig', jg, jg'=0,\cdots, k.
\end{equation}
Here $\delta_{\cdot, \cdot'}$ is the Kronecker delta function. In fact, 
$
L_{ig, jg} (\xi, \eta) = L_{ig} (\xi) L_{jg} (\eta), 
$
where $L_{ig}$ and $L_{jg}$ are the 1D Lagrangian nodal basis functions associated with the corresponding Gaussian nodes. For a computational cell $I_{ij}$, we can perform a linear transformation to the reference cell, with $\xi = \frac{x-x_i}{h_{x, i}}, \eta = \frac{v-v_j}{h_{v,j}}$, and denote by the shifted Gaussian nodes $x_{i,ig} = x_i + h_{x,i}\xi_{ig}$, $v_{j,jg} = v_j + h_{v,j}\eta_{jg}$.

With the nodal basis functions, the DG scheme \eqref{eq:DG} on a computational cell $I_{ij}$ can be equivalently written with the test functions being taken as $L_{ig', jg'}(\xi, \eta)$, $ig', jg'=0,\cdots,k$. We look for the DG solution expressed in the form of $f_{h, i, j}(x, v, t) = \sum_{ig, jg=0}^{k} f^{ig, jg}_{h, i,j}(t) L_{ig, jg}(\xi(x), \eta(v))$, with its nodal values satisfying the following equations:
 \begin{align}
\label{eq:DG_0}
&h_{x, i}h_{v, j} \omega_{ig}\omega_{jg} \left(\frac{d}{dt}f^{ig, jg}_{h,i,j}(t)\right) \notag\\
=& h_{x, i}h_{v, j} \omega_{jg} v_{j,jg}  \sum_{ig''} \omega_{ig''}\left(\frac{d}{dx} L_{ig}(\xi_{ig''})f^{ig'', jg}_{h,i,j}(t) \right)
 - h_{v, j} \omega_{jg}  v_{j,jg} \left(\hat{f}_{i+\frac12, jg} L_{ig}(\frac12)- \hat{f}_{i-\frac12, jg} L_{ig}(-\frac12)\right) \notag\\
+& h_{x, i}h_{v, j}  \omega_{ig} E_{i,ig}  \sum_{jg''} \omega_{jg''} \left(  \frac{d}{dv} L_{jg}(\eta_{jg''}) f^{ig, jg''}_{h,i,j}(t) \right) - h_{x, i} \omega_{ig}  E_{i,ig} \left(\hat{f}_{ig, j+\frac12} L_{jg}(\frac12)- \hat{f}_{ig, j-\frac12} L_{jg}(-\frac12)\right). 
\end{align}
Dividing by $h_{x, i}h_{v, j} \omega_{ig}\omega_{jg}$, the above equation becomes
 \begin{align}
\label{eq:DG}
\frac{d}{dt}f^{ig, jg}_{h, ij}(t) 
=& \frac{1}{\omega_{ig}}\left(v_{j,jg}  \sum_{ig''} \omega_{ig''}\left(\frac{d}{dx} L_{ig}(\xi_{ig''})f^{ig'', jg}_{h, i,j}(t) \right)
 -   \frac{v_{j,jg}}{h_{x, i}}\left(\hat{f}_{i+\frac12, jg} L_{ig}(\frac12)- \hat{f}_{i-\frac12, jg} L_{ig}(-\frac12)\right)\right) \notag\\
+&  \frac{1}{\omega_{jg}}\left(E_{i,ig}  \sum_{jg''} \omega_{jg''} \left(  \frac{d}{dv} L_{jg}(\eta_{jg''}) f^{ig, jg''}_{h, i,j}(t) \right) - \frac{E_{i,ig}}{h_{v, j}} \left(\hat{f}_{ig, j+\frac12} L_{jg}(\frac12)- \hat{f}_{ig, j-\frac12} L_{jg}(-\frac12)\right)\right),
\end{align}
where $\hat{f}_{i\pm\frac12, jg}$ and $\hat{f}_{ig, j\pm\frac12}$ are taken as monotone upwind fluxes and $E_{i,ig}$ denotes the electric field at $x_{i,ig}$. In particular, let $v^+ = \max(v, 0)$, $v^- = \min(v, 0)$, $E^+ = \max(E, 0)$, $E^- = \min(E, 0)$, \eqref{eq:DG} becomes the following with a simple upwind flux
	\begin{align}
& \partial_t f^{ig, jg}_{h, i,j}(t) \notag\\
=& \frac{v^+_{j,jg}}{\omega_{ig}h_{x,i}} \left(\sum_{ig''} \omega_{ig''}\frac{dL_{ig}}{d\xi}(\xi_{ig'}) f^{ig'', jg}_{h, i,j}
 -  f_{h,i, j}^{ig'', jg} L_{ig''}(\shalf)
  L_{ig}(\shalf)-  f_{h, i-1, j}^{ig'', jg} L_{ig''}(\frac12) L_{ig}(-\frac12)\right)\notag \\
+& \frac{v^-_{j,jg}}{\omega_{ig}h_{x,i}} \left(\sum_{ig''} \omega_{ig''}\frac{d L_{ig}}{d\xi}(\xi_{ig''})f^{ig'', jg}_{h, i,j}
 -  f_{h,i+1, j}^{ig'', jg} L_{ig''}(-\frac12)
  L_{ig}(\frac12) + f_{h, i, j}^{ig'', jg} L_{ig''}(-\frac12) L_{ig}(-\frac12)\right)\notag\\
  +&  \frac{E^+_{i,ig}}{\omega_{jg}h_{v,j}} \left(\sum_{jg''} \omega_{jg''}  \frac{dL_{jg}}{d\eta} (\eta_{jg''}) f^{ig, jg''}_{h, i,j}
   - f_{h,i, j}^{ig, jg''} L_{jg''}(\frac12) L_{jg}(\frac12)+ f_{h,i, j-1}^{ig, jg''} L_{jg''}(\frac12) L_{jg}(-\frac12)\right) \notag\\
     +&  \frac{E^-_{i,ig}}{\omega_{jg}h_{v,j}} \left(\sum_{jg''} \omega_{jg''}  \frac{dL_{jg}}{d\eta} (\eta_{jg''}) f^{ig, jg''}_{h, i,j}
   - f_{h,i, j+1}^{ig, jg''} L_{jg''}(-\frac12) L_{jg}(\frac12)+ f_{h,i, j}^{ig, jg''} L_{jg''}(-\frac12) L_{jg}(-\frac12)\right)
    \label{eq:DG_1}
\end{align}

We denote the first two terms on the RHS of \eqref{eq:DG_1} as 
\beq
\label{eq: x-der-dg}
v^+_{j,jg}\cdot D^{+}_{x, i,ig} {\bf f}^{+,:, jg}_{h, i, j}, \quad v^-_{j,jg} \cdot D^-_{x, i,ig} {\bf f}^{-,:, jg}_{h, i, j},
 \eeq
 as standard 1D upwind DG discretizations of $x$ derivative at the $ig$-th Gaussian node of the $i$-th cell for positive/negative velocity, respectively. Here we assume that the $v$- grid is fixed at $jg$-th Gaussian node of the $j$-th cell, and 
 \begin{align*}
 {\bf f}^{+,:, jg}_{h, i, j}&=(f_{h,i-1,j}^{0,jg},\ldots,f_{h,i-1,j}^{k,jg},f_{h,i,j}^{0,jg},\ldots,f_{h,i,j}^{k,jg}),\\
  {\bf f}^{-,:, jg}_{h, i, j}&=(f_{h,i,j}^{0,jg},\ldots,f_{h,i,j}^{k,jg},f_{h,i+1,j}^{0,jg},\ldots,f_{h,i+1,j}^{k,jg}).
 \end{align*}
 Similarly, the other two terms  are denoted as 
 \beq
\label{eq: v-der-dg}
E^+_{i,ig}\cdot D^{+}_{v, j,jg} {\bf f}^{+,ig, :}_{h, i, j}, \quad E^-_{i,ig}\cdot D^-_{v, i,ig} {\bf f}^{-,ig,:}_{h, i,j},
 \eeq
 where 
\begin{align*}
{\bf f}^{+,ig, :}_{h, i, j}&=(f_{h,i,j-1}^{ig,0},\ldots,f_{h,i,j-1}^{ig,k},f_{h,i,j}^{ig,0},\ldots,f_{h,i,j}^{ig,k}),\\
{\bf f}^{-,ig, :}_{h, i, j}&=(f_{h,i,j}^{ig,0},\ldots,f_{h,i,j}^{ig,k},f_{h,i,j+1}^{ig,0},\ldots,f_{h,i,j+1}^{ig,k}).
\end{align*}
 
\begin{rem}
One observation in the above formulation is that, although DG method formulate the scheme in an element-by-element fashion, the evaluation of solution derivatives in $x$- and $v$-directions, at Gaussian nodal points of each cell, actually occurs in a dimension-by-dimension manner. In other words, we can formulate a DG differentiation operator $D^\pm_{x}$ by concatenating $D^\pm_{x,i,ig}$.
%
%
 Similar comments can be applied to $D^\pm_{v}$ as the DG differentiation operator for the $v$-derivative.
\end{rem}

\subsection{Nodal DG solutions on grid points and weighted SVD}

In this subsection, we first set up the nodal DG solutions at Gaussian grid points on each computational cell, which comes from a tensor product of $x$ and $v$ discretizations. Then we introduce several basic tools for performing the LoMaC DG low rank tensor approach in the next subsection. These tools include the weights and definition for the discrete inner product space, the orthogonal projection for conservation of macroscopic observables in the weighted inner product space,  as well as the weighted singular value truncation.  

The nodal grid points for the DG discretization, as tensor product of $(k+1)N_x \times (k+1)N_v$ points from $N_x \times N_v$ computational cells, are
\beq
\label{eq: x_grid}
x_{\text{grid}}: \quad x_{\min}< \cdots < (x_{i, 0} < \cdots<x_{i, k}) \cdots <  x_{\max}, 
\eeq
\beq
\label{eq: v_grid}
v_{\text{grid}}: \quad -v_{\max}< \cdots < (v_{j, 0} < \cdots<v_{j, k}) \cdots <  v_{\max}.
\eeq
Here $\{x_{i, ig}\}_{ig=0}^{k}$ and $\{v_{j, jg}\}_{jg=0}^{k}$ are the shifted Gaussian points on the cell $[\xL, \xR]$ and $[\vL, \vR]$ respectively. 
DG nodal solutions on the tensor product of grids \eqref{eq: x_grid}-\eqref{eq: x_grid} are organized as ${\bf f} \in \mathbb{R}^{(k+1)N_x \times (k+1) N_v}$ with each of its component $f^{ig, jg}_{h, i,j}(t)$ being an approximation to point values of the solution on the tensor product of grids \eqref{eq: x_grid}-\eqref{eq: v_grid}. It has a corresponding low rank decomposition, similar to \eqref{eq: fn1}, as
\begin{equation}
\label{eq: fn2}
{\bf f} = \sum_{l=1}^{r} \left(C_l \  {\bf U}_l^{(1)}  \otimes {\bf U}_l^{(2)}\right), \quad 
(\mbox{or element-wise:} \quad
\ijindex{f} = \sum_{l=1}^{r}  C_l \ {U}_{l, i, ig}^{(1)} {U}_{l, j, jg}^{(2)}), 
\end{equation}
where ${\bf U}_l^{(1)} \in \mathbb{R}^{(k+1)N_x}$ and ${\bf U}_l^{(2)} \in \mathbb{R}^{(k+1)N_v}$ can be viewed as approximations to corresponding grid point values of the basis functions in \eqref{eq: fn1}. \eqref{eq: fn2} can also be viewed as a weighted SVD of the matrix ${\bf f} \in \mathbb{R}^{(k+1)N_x \times (k+1)N_v}$, where the weight 
\beq
\bm{ \omega} = \bm{ \omega}_x \otimes \bm{ \omega}_v
\label{eq: weight}
\eeq
with  
\[
\bm{ \omega}_{x}\in \mathbb{R}^{(k+1)N_x}, \quad { \omega}_{x, i, ig} = h_{x, i} \omega_{ig}, \quad i = 1, \cdots N_x, \quad ig = 0, \cdots,k,
\]
\[ 
\bm{ \omega}_{v}\in \mathbb{R}^{(k+1)N_v}, \quad { \omega}_{v, j, jg} = h_{v, j} \omega_{jg}, \quad j = 1, \cdots N_v, \quad jg = 0, \cdots, k.
\]
Next, we introduce three basic operations for the discrete weighted inner product spaces: (1) the computation of macroscopic observations; (2) the orthogonal projection of ${\bf f}$ for conservation of macroscopic observables; (3) a weighted singular value truncation.  

\bit
\item {\bf Macroscopic quantities of ${\bf f}$.}
In order to perform the projection, we first compute macroscopic quantities of ${\bf f}$, i.e. the discrete macroscopic charge, current and kinetic energy density ${\boldsymbol \rho}$, ${\bf J}$ and ${\boldsymbol \kappa} \in \mathbb{R}^{N_x}$ by quadrature 
   \begin{align}
\left(\begin{array}{l}
{\boldsymbol \rho}\\
{\bf J}\\
{\boldsymbol \kappa} 
\end{array}
\right )
 = \sum_{l=1}^{r} C_l
 \left 
 \langle \bU^{(2)}_{l}, 
 \left(\begin{array}{l}
 {\bf 1}_v \\
\bv\\
\frac12\bv^2
\end{array}
\right )
\right \rangle_v
\ \bU^{(1)}_l. 
\label{eq:rho_j_kappa}
\end{align}   
and the inner product $\langle \cdot, \cdot \rangle_v$ is defined as
\beq
\label{eq: inner}
\langle {\bf f}, {\bf g} \rangle_v \doteq \sum_{j, jg} f_{j, jg} g_{j, jg} \omega_{v, j,jg}, \quad {\bf f}, {\bf g} \in \mathbb{R}^{(k+1)N_v},
\eeq
in analogue to the continuous inner product $\int_{\Omega_v} f(v)g(v)dv$.
\item {\bf An orthogonal projection with preservation of macroscopic densities.}
Following the conservative projection idea in \cite{guo2022lowrank}, we propose to project a kinetic solution ${\bf f}$ to a subspace 
\beq
\mathcal{N}\doteq \text{span}\{{\bf 1}_v, \bv, \bv^2\},
\eeq
where  ${\bf 1}_v\in \mathbb{R}^{(k+1)N_v}$ is the vector of all ones,  $\bv$ is the v-grid \eqref{eq: v_grid} and $\bv^2$ $\in \mathbb{R}^{(k+1)N_v}$ is the element-wise square of $\bv$. We use a weight function $w_M(v)= \exp(-v^2/2)$ with exponential decay to ensure proper decay of the projected function as $v \to \infty$. 
We introduce the weighted inner product and the associated norm as
\beq
\label{eq: inner_prod_d}
\langle {\bf f},  {\bf g} \rangle_{{\bf w}_M} = \sum_{j, jg} f_{j, jg} g_{j, jg} w_{M, j, jg} \omega_{v, j,jg},  \quad \|{\bf f}\|_{{\bf w}_M} =\sqrt{\langle {\bf f},  {\bf f} \rangle_{{\bf w}_M}}, 
\eeq
where ${\bf w}_M \in \mathbb{R}^{(k+1)N_v}$ with $w_{M, j, jg} = w_M(v_{j, jg})$ and $\omega_{v,j, jg}$ is the quadrature weights for $v$-integration.
Correspondingly, we let
$
l^2_{{\bf w}_M} = \{{\bf f}\in\mathbb{R}^{(k+1)N_v}: \|{\bf f}\|_{{\bf w}_M} < \infty\}.
$
With the weight function, we first scale ${\bf f}$ as 
\beq
\label{eq: rescale}
\tilde{\bf f} = \frac{1}{{\bf w}_M} \star {\bf f} =  \sum_{l=1}^{r} \left(C_l \ \ {\bf U}_l^{(1)}  \otimes \left(\frac{1}{{\bf w}_M} \star  {\bf U}_l^{(2)}\right)\right), 
\eeq
where $\star$ is the element-wise product in the $v$-dimension. 
We perform an orthogonal projection of $\tilde{\bf f}$ with respect to the inner product \eqref{eq: inner_prod_d} onto subspace $\mathcal{N}$, i.e.
\beq
\label{eq: proj}
\langle P_{\mathcal{N}}(\tilde{\bf f}), {\bf g} \rangle_{\bw_M}
= \langle \tilde{\bf f}, {\bf g} \rangle_{\bw_M}, 
\quad \forall {\bf g}\in \mathcal{N}. 
\eeq
It can be shown that ${\bf w}_M\star P_{\mathcal{N}}(\tilde{\bf f})$ preserves the mass, momentum and kinetic energy densities of ${\bf f}$ in the discrete sense. 
With the orthogonal project, a conservative decomposition of ${\bf f}$ \cite{guo2022lowrank} can be performed as 
\beq
\label{eq: f_decom_d}
{\bf f} = {\bw_M} \star (P_{\mathcal{N}}(\tilde{{\bf f}}) + (I-P_{\mathcal{N}})(\tilde{{\bf f}})) 
\doteq {\bw_M} \star (\tilde{{\bf f}}_1 + \tilde{{\bf f}}_2)
\doteq {\bf f}_1 + {\bf f}_2, 
\eeq
where $\mathbf{f}_1$ can be represented as  a rank three tensor
 \begin{align}\label{eq:f1}
 {\bf f}_1 ({\boldsymbol \rho}, {\bf J}, {\boldsymbol \kappa})= & \frac{\boldsymbol \rho}{\|{\bf 1}_v\|_{\bw_M}^2} \otimes ({\bw_M} \star {\bf 1}_v) 
 +  \frac{\bf J}{\|{\bf v}\|_{\bw_M}^2} \otimes ({\bw_M} \star {\bf v})  +  \frac{2 {\boldsymbol \kappa}-c{\boldsymbol \rho}}{\|{\bf v}^2- c {\bf 1}_v\|_{\bw_M}^2} \otimes ({\bw_M} \star ({\bf v}^2- c {\bf 1}_v) ), 
 \end{align} 
 where $c=\frac{\langle \bm{1}_v, {\bf v}^2\rangle_{\bw_M}}{\|{\bf 1}_v\|_{\bw_M}^2}$ is computed so that $\{ {\bf 1}_v, {\bf v}, {\bf v}^2- c {\bf 1}_v\}$ forms an orthogonal set of basis and 
   ${\boldsymbol \rho}$, ${\bf J}$ and ${\boldsymbol \kappa}$ are the discrete mass, momentum and kinetic energy density of ${\bf f}$ from \eqref{eq:rho_j_kappa}. ${\bf f}_1$ preserves the discrete mass, momentum and kinetic energy density of ${\bf f}$, while the remainder part ${\bf f}_2 = {\bf f} -{\bf f}_1$ has zero of them.

\item {\bf Weighted SVD procedure with preservation of macroscopic observables.} The remainder part in the orthogonal decomposition ${\bf f}_2$ can be shown to have zero macroscopic mass, momentum and kinetic energy. In order to perform a singular value truncation to remove redundancy in basis representation, as well as maintain the zero macroscopic observables, we perform a weighted SVD truncation, where the weights comes from the quadrature weights associated with quadrature nodes as well as the weight function $w_M$ at quadrature nodes. 
A weighted SVD procedure assumes a weighted inner product space $\langle \cdot, \cdot \rangle$ in the following sense:
\beq
\label{eq: inner_xv}
\langle {\bf f}, {\bf g} \rangle \doteq \sum_{i, ig; j, jg} \ijindex{f} \ijindex{g}\omega_{x, i, ig} \omega_{v, j, jg} w_{M, j, jg},  \quad {\bf f}, {\bf g} \in \mathbb{R}^{(k+1)N_x \times (k+1)N_v}.
\eeq
The weighted SVD procedure consists of three steps: first a scaling step with element-wise multiplication by $\frac{1}{\bf \sqrt{\bm{ \omega} \star {\bf w}_M}}$ with $\bm{ \omega}$ in \eqref{eq: weight} and ${\bf w}_M$ as in \eqref{eq: inner_prod_d}, followed by a traditional SVD procedure, and finally a rescaling step with element-wise multiplication by ${\sqrt{\bm{ \omega}\star {\bf w}_M}}$. The associated storage cost is $\mathcal{O}(r N)$, where  $N:= \max\{(k+1)N_x, (k+1)N_v\}$. The scaling and rescaling can be performed with respect to the basis in $x$ and $v$ directions with the cost of $\mathcal{O}(r N)$. We denote this weighted SVD truncation procedure as $\mathcal{T}_{\varepsilon, \bm{ \omega} \star {\bf w}_M}$. In the algorithm, it will be applied to the remainder ${\bf f}_2$ in \eqref{eq: f_decom_d}, i.e. $\mathcal{T}_{\varepsilon, \bm{ \omega} \star {\bf w}_M}({\bf f}_2)$ to realize data sparsity. In summary, we have the following weighted SVD truncation procedure for ${\bf f}_2 \in \mathbb{R}^{(k+1)N_x \times (k+1)N_v}$.
\beq
\label{eq: weighted_SVD}
\boxed{{\bf f}_2}\stackrel{scaling}{ \Longrightarrow} \boxed{\tilde{\bf f}_2 \doteq \frac{{\bf f}_2}{\sqrt{\bm{ \omega}\star {\bf w}_M}}} \stackrel{truncation}{\Longrightarrow} \boxed{\mathcal{T}_{\varepsilon}(\tilde{\bf f}_2)} \stackrel{rescaling}{ \Longrightarrow} \boxed{\sqrt{\bm{ \omega}\star {\bf w}_M}\star \mathcal{T}_{\varepsilon}(\tilde{\bf f}_2)}
\eeq
with the output being 
\beq
\label{eq: weighted_T} 
\mathcal{T}_{\varepsilon, \bm{ \omega} \star {\bf w}_M}({\bf f}_2)\doteq \sqrt{\bm{ \omega}\star {\bf w}_M}\star \mathcal{T}_{\varepsilon}(\tilde{\bf f}_2).
\eeq
\eit 

\begin{rem}
We now summarize by recognizing that there are three different discrete inner product spaces we introduced in this subsection: the first is defined by \eqref{eq: inner} as a discrete analog of a standard $L^2$ inner product in $v$ direction only for computing macroscopic observables, the second is defined by \eqref{eq: inner_prod_d} as as a discrete analog of a weighted inner product product space in $v$ direction for projection purpose, and the third is defined by \eqref{eq: inner_xv} as a discrete analog of weighted inner product in $x-v$ directions for weighted SVD truncation for the remainder ${\bf f}_2$ in \eqref{eq: f_decom_d} to realize data sparsity via removing redundancy in basis representation in each dimension.
\end{rem}

\subsection{LoMaC low rank approach with DG discretization}
In this subsection, we introduce the proposed LoMaC low rank approach with DG discretization. The flow chart of the algorithm is in a similar spirit to that we introduced in \cite{guo2022local}. We outline the scheme flow chart with special discussion on the nodal discretization DG spatial discretization and the corresponding weighted orthogonal decomposition and weighted SVD truncation.

Below, we assume the solution in the form of \eqref{eq: fn2} with superscript $n$ for the solution at $t^n$.

\begin{enumerate}
\item [Step 0.] {\em Initialization.} We assume that the analytic initial condition can be written as or approximated by a linear combination of separable functions, then the DG solutions can be constructed directly from those separable functions on Gaussian nodal points. 
\item [Step 1.] {\em Add basis and obtain an intermediate solution ${\bf f}^{n+1, *}$.} 
A second order multi-step discretization of time derivative in \eqref{vlasov1} gives
\begin{equation}
\label{eq: fn3}
{f}^{n+1, *} = \frac14{f}^{n-2}+\frac34 {f}^{n}- \frac32\Delta t \left(v \partial_x ({f}^n) + E^n \partial_v ({f}^n)\right).
\end{equation}
Here the electric field $E^n$ is solved by a Poisson solver. Thanks to the tensor friendly form of the Vlasov equation, assuming the low rank format of solutions at $t^{n-2}$ and $t^n$, ${\bf f}^{n+1, *}$ can be represented in the following low rank format:
\begin{align}
\label{eq:lowrankmethod}
{\bf f}^{n+1, *} =& \frac14 \sum_{l=1}^{r^{n-2}} C_l^{n-2} \left( {\bf U}_l^{(1), n-2} \otimes {\bf U}_l^{(2), n-2}\right) 
+\frac34 \sum_{l=1}^{r^n} C_l^n \left( {\bf U}_l^{(1), n} \otimes {\bf U}_l^{(2), n}\right)  \\
& -\frac32  \Delta t 
\left( D_x {\bf U}_l^{(1), n} \otimes \bv \star {\bf U}_l^{(2), n} + \bE^n \star {\bf U}_l^{(1), n} \otimes D_v {\bf U}_l^{(2), n}
\right),
\end{align}
Here, with a slight abuse of notation, $\bv \in\mathbb{R}^{N_v}$ denotes the coordinates of $v_{grid}$ introduced in \eqref{eq: v_grid}. $D_x$ and $D_v$ represent high order spatial differentiations, and $\star$ denotes an element-wise multiplication operation. 
For example the discretization of $D_x {\bf U}_l^{(1), n} \otimes \bv \star {\bf U}_l^{(2), n}$ follows 
\begin{equation}
D^+_x {\bf U}_l^{(1), n} \otimes \bv^+ \star {\bf U}_l^{(2), n} + D^-_x {\bf U}_l^{(1), n} \otimes \bv^- \star {\bf U}_l^{(2), n}, 
\end{equation}
where $D^+_x$ and $D^-_x$ are a $(k+1)^{th}$ order conservative upwind DG discretization of positive and negative velocities respectively, with $\bv^+ = \max(\bv, 0)$ and $\bv^-=\min(\bv, 0)$. For example, see \eqref{eq: x-der-dg} for the derivative at the $i, ig$-th nodal points. Similar comments can be applied to the $D_v$ operator in $\bE^n \star {\bf U}_l^{(1), n} \otimes D_v {\bf U}_l^{(2), n}$. 
\item [Step 2.] {\em Perform a macroscopic conservative decomposition} as in \eqref{eq: f_decom_d} 
\beq
\label{eq: decomp}
{\bf f}^{n+1, *}  = {\bf f}_1 + {\bf f}_2.
\eeq 
Here ${\bf f}_1$ is computed from \eqref{eq:f1} with the macroscopic observables computed as in \eqref{eq:rho_j_kappa}; 
${\bf f}_2 = {\bf f}-{\bf f}_1$ is the remainder term, where we apply a weight SVD truncation of the remainder term $T_{\epsilon, {\bf w} \star {\bf w}_M}({\bf f}_2)$ as in the previous subsection.
\item [Step 3.]{\em Conservative update of macroscopic variables.} 
Let $U \doteq (\rho, {J}, {e})^\top$,  $F \doteq ({J}, \sigma,  {\bf Q})^\top$ and  $S = (0, \rho E,0)^\top$, then the macroscopic system \eqref{eq:mass}-\eqref{eq:ener} becomes
\beq
\label{eq:U}
U_t + F_x = S.
\eeq
The numerical solutions for $U$ are denoted as $\boldsymbol\rho^{M}$, $\bJ^{M}$, $\boldsymbol\kappa^{M}$, where $M$ is for ``Macroscopic variables". 
In the DG setting, they are nodal values of DG solutions of size $(k+1)N_x$, and are computed with a high order nodal DG spatial discretization coupled with the second order SSP multi-step time integrator for system \eqref{eq:U}: 
\begin{align}
\label{eq:Uupdate_0}
U_{i, ig}^{n+1} &  = \frac14U^{n-2}_{i, ig} + \frac34U^{n}_{i, ig} + \frac32\Delta t (D^{+}_{x, i, ig} {\bf F}^{n, +}_{i,:} + D^{-}_{x, i, ig} {\bf F}^{n, -}_{i,:} + {S}^n_{i, ig})
\end{align}
where $U^{n}_{i, ig} = (\rho_{i, ig}^n, J^{n}_{i, ig} , e^{n}_{i, ig})^\top$ and $S_{i, ig}^n = (0, \rho_{i, ig}^nE_{i, ig}^n,0)^\top$, $i=1,\ldots,N_x$, $ig = 0, \cdots, k$. 
The ${\bf F}^{n, \pm} \in \mathbb{R}^{(k+1)N_x}$ are given by the kinetic flux vector splitting scheme \cite{guo2022local} with
 \begin{align}
{\bf F}^{n, +}
& = \sum_{l=1}^{r^n} C^{n}_l
 \left 
 \langle \bU^{(2), n}_{l}, 
 \left(\begin{array}{c}
\bv^+\\
(\bv^+)^2\\
\frac12 (\bv^+)^3
\end{array}
\right )
\right \rangle_v
\ \bU^{(1), n}_l \\
{\bf F}^{n, -}
& = \sum_{l=1}^{r^n} C^n_l
 \left 
 \langle \bU^{(2), n}_{l}, 
 \left(\begin{array}{c}
\bv^-\\
(\bv^-)^2\\
\frac12 (\bv^-)^3
\end{array}
\right )
\right \rangle_v
\ \bU^{(1), n}_l,
\label{eq:Fpm}
\end{align}
where $\bv^+ = \max(\bv, 0)$, $\bv^- = \min(\bv, 0)$ and the weighted inner product is defined in \eqref{eq: inner}. $D^{\pm}_{x, i, ig}$ are defined in a similar fashion as in \eqref{eq: x-der-dg}, and 
$${\bf F}^{n, +}_{i,:} =(F_{i-1,0}^{n,+},\ldots,F_{i-1,k}^{n,+},F_{i,0}^{n,+},\ldots,F_{i,k}^{n,+}),$$
$${\bf F}^{n, -}_{i,:} =(F_{i,0}^{n,-},\ldots,F_{i,k}^{n,-},F_{i+1,0}^{n,-},\ldots,F_{i+1,k}^{n,-}).$$
 From the updated $U^{n+1}_{i, ig}$, we can compute 
\begin{equation}
\label{eq:kinetic_update}
{\kappa}_{i, ig}^{n+1, M} = {e}_{i, ig}^{n+1, M} - \frac12|E^{n+1, M}_{i, ig}|^2,
\end{equation}
where $\bE^{n+1, M}$ is computed directly from $\boldsymbol \rho^{n+1,M}$ via Poisson's equation using the local DG method \cite{arnold2002unified}.  Finally, we construct ${\bf f}^M_1$ according to \eqref{eq:f1}, with the macroscopic observables from this step of macroscopic update. 
\item [Step 4.] We update the low rank solution as
\beq
\label{eq: f_update}
{\bf f}^{n+1}  = {\bf f}^M_1 + \mathcal{T}_{\varepsilon, {\bm{\omega}} \star {\bf w}_M}({\bf f}_2),
\eeq 
where ${\bf f}^M_1$ computed from Step 3 and the weighted SVD truncation operator $T_{\varepsilon, {\bm{\omega}} \star {\bf w}_M}$ as in defined \eqref{eq: weighted_T}. Here $ {\bf f}^M_1$ is used, as a correction to $ {\bf f}_1$ for local conservation of mass, momentum and energy densities. 
\end{enumerate}

In summary, the proposed LoMaC low rank DG scheme updates the VP solution by first adding basis through traditional high order nodal DG discretizations for spatial/velocity derivatives and an SSP multi-step time integrator. Then we perform an orthogonal decomposition, with respect to a weighted inner product space, for preservation of macroscopic observables. Meanwhile, we update macroscopic conservation laws using KFVS fluxes for local conservation of macroscopic mass, momentum and energy density. Finally, we correct the solution via \eqref{eq: f_update} with macroscopic densities agree with those from macroscopic updates and with a weighted SVD truncation on the remainder term to realize optimal data sparsity. Note that the Step 2 and Step 3 above can be implemented in parallel, i.e. no need to be in a sequential order. We have the following proposition for local and global macroscopic conservation properties of the proposed scheme.  

\begin{prop} (Local mass, momentum and energy conservation.) The proposed LoMaC low rank DG algorithm locally conserves the macroscopic mass, momentum and energy. 
\end{prop}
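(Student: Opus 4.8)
The plan is to reduce the statement to two independent facts and then combine them: (i) the discrete macroscopic mass, momentum and energy densities of the updated kinetic solution ${\bf f}^{n+1}$ in \eqref{eq: f_update} coincide exactly with the macroscopic DG solution $(\boldsymbol\rho^{n+1,M},{\bf J}^{n+1,M},e^{n+1,M})$ produced in Step 3; and (ii) that macroscopic DG solution itself satisfies a local, cell-wise discrete conservation law. Granting both, local conservation of the kinetic moments follows at once, since they are then governed by a locally conservative scheme.

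For (i), first I would argue that the remainder ${\bf f}_2$ in the decomposition \eqref{eq: f_decom_d}, and more importantly its weighted truncation $\mathcal{T}_{\varepsilon,\bm{\omega}\star\bw_M}({\bf f}_2)$, carry no macroscopic mass, momentum or kinetic energy. Since ${\bf f}_2=\bw_M\star(I-P_{\mathcal{N}})(\tilde{\bf f})$ while the moment functionals \eqref{eq:rho_j_kappa} use the unweighted product \eqref{eq: inner}, the weight $\bw_M$ converts \eqref{eq: inner} into the weighted product \eqref{eq: inner_prod_d}; hence at every $x$-node $\langle{\bf f}_2,{\bf g}\rangle_v=\langle(I-P_{\mathcal{N}})(\tilde{\bf f}),{\bf g}\rangle_{\bw_M}=0$ for each ${\bf g}\in\{{\bf 1}_v,\bv,\tfrac12\bv^2\}\subset\mathcal{N}$, by the defining orthogonality \eqref{eq: proj}. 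Thus every $v$-profile of ${\bf f}_2$ is $\langle\cdot,\cdot\rangle_v$-orthogonal to $\mathcal{N}$. Next I would show this property survives the truncation. The crux is that the scaling and rescaling in \eqref{eq: weighted_SVD} cancel exactly: each retained right singular vector ${\bf b}_l$ of the scaled matrix lies in the $v$-row space of ${\bf f}_2/\sqrt{\bm{\omega}\star\bw_M}$, so $\sqrt{\bm{\omega}\star\bw_M}\star{\bf b}_l$ lies in $\mathrm{span}\{{\bf f}_2(x_{i,ig},\cdot)\}$. Orthogonality to $\mathcal{N}$ is a linear constraint preserved under taking spans, so each $v$-profile of $\mathcal{T}_{\varepsilon,\bm{\omega}\star\bw_M}({\bf f}_2)$ is again $\langle\cdot,\cdot\rangle_v$-orthogonal to $\mathcal{N}$, and the truncated remainder contributes zero mass, momentum and kinetic energy.

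Consequently the moments of ${\bf f}^{n+1}$ equal those of ${\bf f}^M_1$ alone. By construction \eqref{eq:f1}, with $\{{\bf 1}_v,\bv,\bv^2-c{\bf 1}_v\}$ orthogonal, ${\bf f}^M_1$ reproduces exactly the densities $\boldsymbol\rho^{n+1,M}$, ${\bf J}^{n+1,M}$, $\boldsymbol\kappa^{n+1,M}$ from Step 3, so mass and momentum conservation reduce to their conservation by the macroscopic update. For energy I would close the loop through Poisson: since the mass of ${\bf f}^{n+1}$ equals $\boldsymbol\rho^{n+1,M}$, the field recovered from ${\bf f}^{n+1}$ is identical to $\bE^{n+1,M}$, whence the total energy density of ${\bf f}^{n+1}$ is $\boldsymbol\kappa^{n+1,M}+\tfrac12|\bE^{n+1,M}|^2=e^{n+1,M}$ by \eqref{eq:kinetic_update}, establishing (i). For (ii) I would invoke the standard local conservation of the nodal DG scheme \eqref{eq:Uupdate_0}: summing the nodal update over the Gaussian points of a fixed cell against the quadrature weights $h_{x,i}\omega_{ig}$ is equivalent to testing the macroscopic weak form with the constant function on that cell, so the interior volume term vanishes and only the interface fluxes remain. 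Since those fluxes, assembled from ${\bf F}^{n,\pm}$ in \eqref{eq:Fpm}, are single-valued at each interface $x_{i+1/2}$, the cell-average update of $(\boldsymbol\rho,{\bf J},e)$ takes the conservative balance form (with source $\rho E$ only in the momentum component); adjacent interface contributions telescope, giving both local per-cell and global conservation.

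I expect the main obstacle to be the truncation step of (i): the weighted SVD $\mathcal{T}_{\varepsilon,\bm{\omega}\star\bw_M}$ is nonlinear and data-dependent, so one must verify carefully that it cannot reintroduce macroscopic moments into the remainder. The argument hinges on the exact cancellation of the scaling and rescaling factors, which keeps the truncated $v$-basis inside the span of the original zero-moment profiles; carrying this bookkeeping through for all three moments simultaneously, and confirming that the energy consistency genuinely follows from reusing the same Poisson solve, is the delicate part, whereas the local conservation of the macroscopic DG scheme in (ii) is essentially classical.
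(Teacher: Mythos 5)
Your proposal is correct and follows essentially the same route as the paper, whose proof is a one-line appeal to ``the construction of the algorithm'' (your point (i): the moments of ${\bf f}^{n+1}={\bf f}_1^M+\mathcal{T}_{\varepsilon,\bm{\omega}\star{\bf w}_M}({\bf f}_2)$ coincide with the Step-3 macroscopic DG solution, since the truncated remainder keeps zero moments and ${\bf f}_1^M$ reproduces $\boldsymbol\rho^{M},{\bf J}^{M},\boldsymbol\kappa^{M}$ exactly) together with the local conservation of the nodal DG/KFVS update for the macroscopic system (your point (ii)). You simply supply the details the paper leaves implicit, including the scaling/rescaling cancellation that keeps the truncated $v$-profiles orthogonal to $\mathcal{N}$, and these details are sound.
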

\begin{proof} The proof follows directly from the construction of the algorithm, and the fact that the DG algorithm for macroscopic systems locally conserve the mass, momentum and energy. 
\end{proof}

Finally, we comment on the algorithm extension of the above proposed LoMaC low rank DG algorithm to a general setting. In a high dimensional setting (e.g. 2D2V), the above DG algorithm can be generalized using the hierarchical Tucker (HT) format as in \cite{guo2022local}. If the mesh for spatial discretization comes from tensor product of 1D discretization, then the algorithm can be directly generalized following the steps in \cite{guo2022local}, but with DG discretization on spatial/velocity derivatives and using a weighted inner product space on DG nodal solutions. We will not repeat the details, but refer to \cite{guo2022local}. Alternatively, one could consider nodal DG solutions on an unstructured mesh for the spatial dimensions for flexibility in geometry and boundary conditions, and use a HT dimension tree with full rank in spatial dimensions, but low rank between spatial and velocity dimensions, and within velocity dimensions. Further, it is possible to use DG for spatial discretization for compact boundary treatment and use spectral methods for high order accuracy in velocity directions. Similar LoMaC property can be achieved for the corresponding high dimensional algorithm.

\section{Numerical results}\label{sec:numerical}
In this section, we present a collection of numerical examples to demonstrate the efficacy of the proposed LoMaC low rank tensor DG methods.  
The second order SSP multi-step method is employed for time integration. We numerically verify the LoMaC property by tracking the time evolution of total mass, total momentum and  total energy.

\subsection{Linear advection: convergence and superconvergence}
\begin{exa}\label{ex:linear} We solve the following simple 2D linear advection problem
$$
u_t + u_{x_1} +  u_{x_2} = 0,\quad x_1,\, x_2 \in [0,2\pi],
$$
with periodic boundary conditions. We choose the initial condition $u(x_1,x_2,t=0) = \sin(x_1+x_2)$, and the exact solution is known as  $$u(x_1,x_2,t) = \sin(x_1+x_2-2t),$$
which is smooth and stays very low rank over time. We make use of this example to investigate the convergence and superconvergence of the proposed low rank DG method. It is well known that the full grid DG solution is superconvergent in the negative-order norm with order $2k+1$, based on which the DG solution over a translation invariant grid can be post-processed so that the convergence order is enhanced from $k+1$ to $2k+1$ in the $L^2$ norm \cite{cockburn2003enhanced}. In the simulation, we let $k=1$ and employ a set of uniform meshes with $N_{x_1}=N_{x_2}$. The time step is chosen as $\Delta t= \left(\frac{h_x}{3} \right)^{1.5}$ to minimize the effect of temporal errors. The truncation threshold is set to be $\varepsilon=10^{-4}$. We solve the problem up to $t=1$. At the end of the computation, we post-processes the low rank DG solutions by convolving the basis $\bU^{(1)}$ and $\bU^{(2)}$ with the kernel given in \cite{cockburn2003enhanced}. The numerical results are summarized in Table \ref{tb:linear}. It is observed that the low rank solution before post-processing is second order accurate ($k+1$); after post-processing the low rank DG solution, the accuracy is enhanced to third order ($2k+1$). The CPU time  approximately scales as $2^{1.5}$ with mesh refinement, indicating that the curse of dimensionality is avoided for this problem. In Figure  \ref{fig:linear_error}, we plot the errors before and after post-processing, and it is observed that the errors of the low rank DG solutions are highly oscillatory before post-processing, implying that the solution is superconvergent in the negative-order norm. After post-processing, the error plots become much smoother, and the magnitude is reduced significantly. Lastly, we plot the time histories of the ranks of the DG solution in Figure \ref{fig:linear_rank}, and we can see that the representation ranks of the solutions stay two during the time evolution for all sets of meshes used. The numerical evidence indicates that the proposed low rank DG method with the adding and removing basis procedure preserves the superconvergence property of the standard DG method. The superconvergence phenomenon due to the DG discretization is preserved well under the low rank truncation setting, if the solution stays low rank.

\begin{table}[!hbp]
	\centering
	\caption{Example \ref{ex:linear}. $t=1$. $k=1$. Convergence study. }
	\label{tb:linear}
	\begin{tabular}{|c|c|c|c|c|c|c|c|c|c|}
		\hline
	 \multirow{2}{*}{$N_{x_1}\times N_{x_2}$}  & \multicolumn{4}{|c|} {Before post-processing} & \multicolumn{4}{|c|} {After post-processing} & \multirow{2}{*}{CPU} \\\cline{2-9}
		  & $L^2$ error & order & $L^\infty$ error & order & $L^2$ error & order & $L^\infty$ error & order &\\\hline
$16\times16$	&	1.59E-01	&		&	5.55E-02	&		&	3.24E-02	&		&	7.33E-03	&	&0.28s	\\\hline
$32\times32$	&	3.73E-02	&	2.09	&	1.34E-02	&	2.05	&	4.20E-03	&	2.95	&	9.48E-04	&	2.95& 0.45s	\\\hline
$64\times64$	&	9.03E-03	&	2.05	&	3.29E-03	&	2.03	&	5.35E-04	&	2.97	&	1.21E-04	&	2.97& 1.22s	\\\hline
$128\times128$	&	2.22E-03	&	2.02	&	8.13E-04	&	2.02	&	6.75E-05	&	2.99	&	1.52E-05	&	2.99&3.06s	\\\hline
	\end{tabular}
\end{table}

 \begin{figure}[h!]
	\centering
       \subfigure[$N_{x_1}\times N_{x_2}=16\times16$]{\includegraphics[height=40mm]{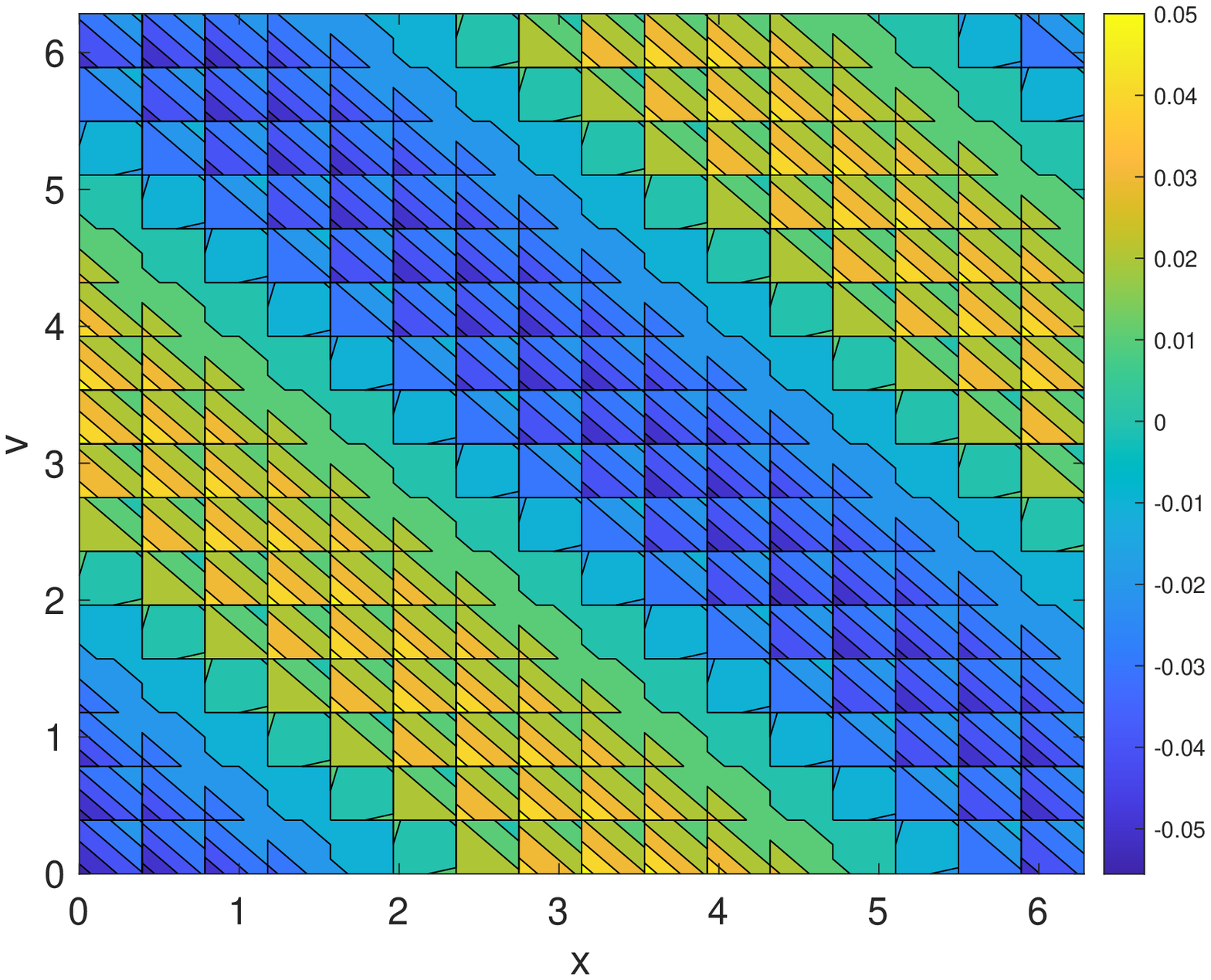}}
	\subfigure[$N_{x_1}\times N_{x_2}=32\times32$]{\includegraphics[height=40mm]{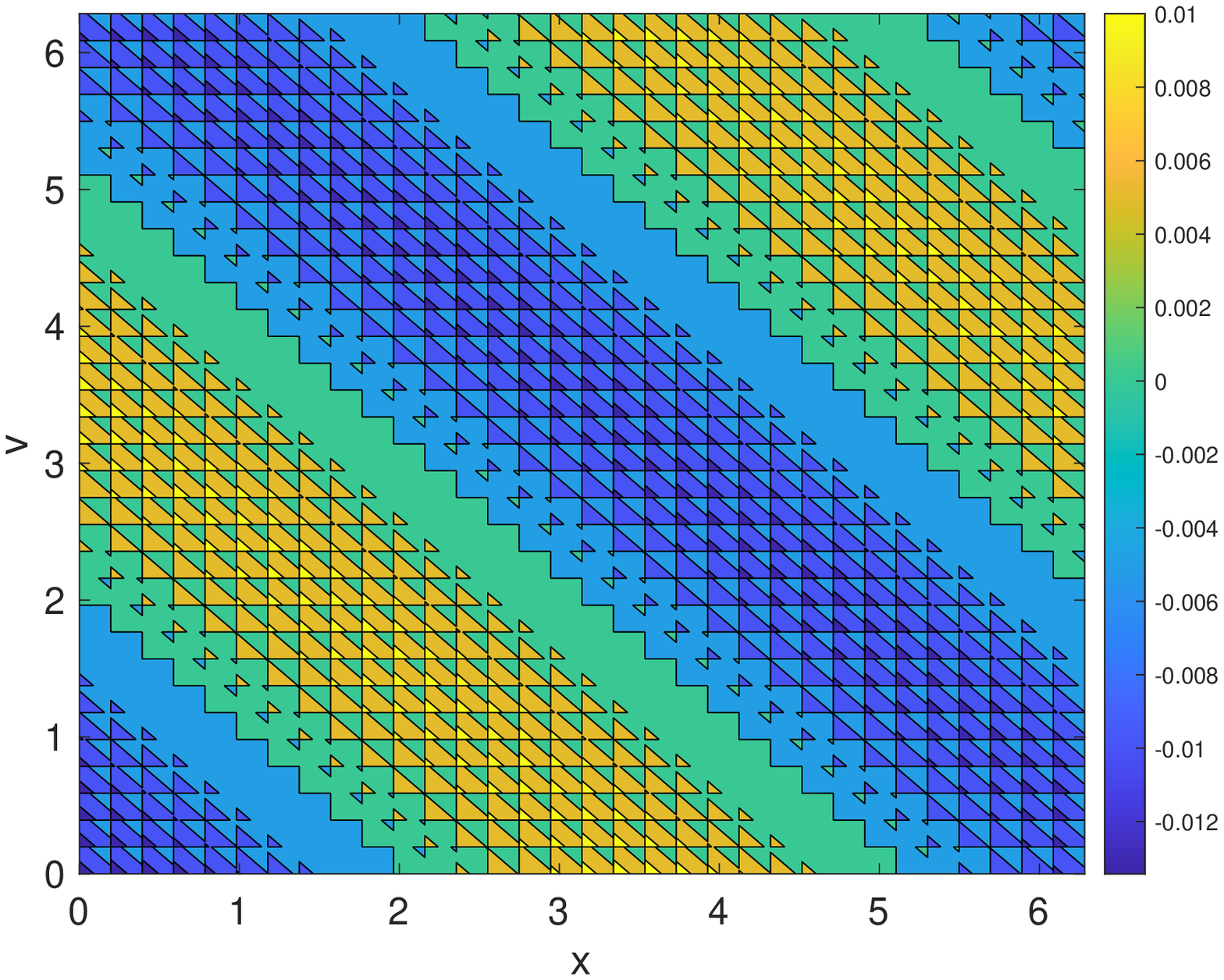}}
	\subfigure[$N_x\times N_v=64\times64$]{\includegraphics[height=40mm]{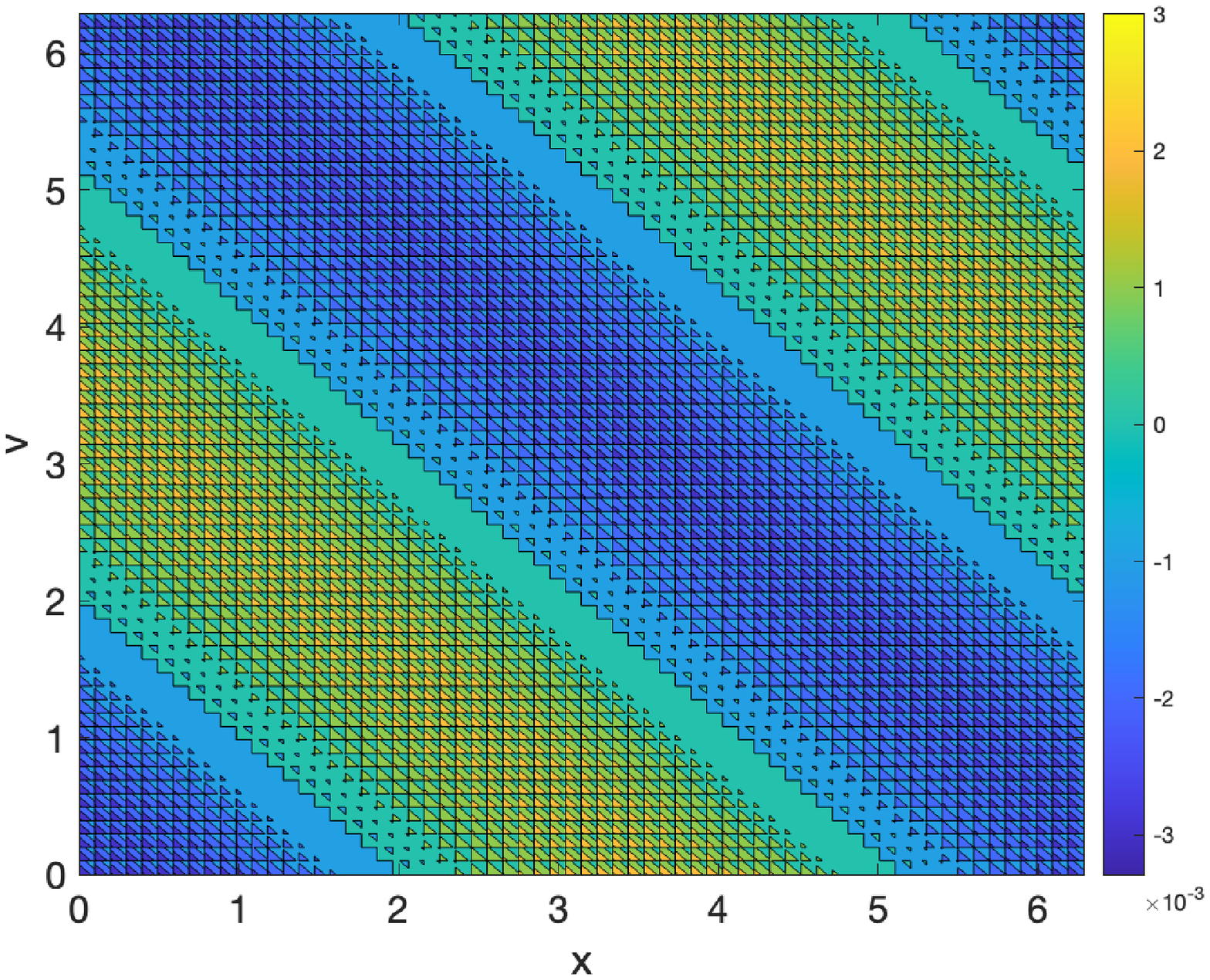}}
       \subfigure[$N_{x_1}\times N_{x_2}=16\times16$]{\includegraphics[height=40mm]{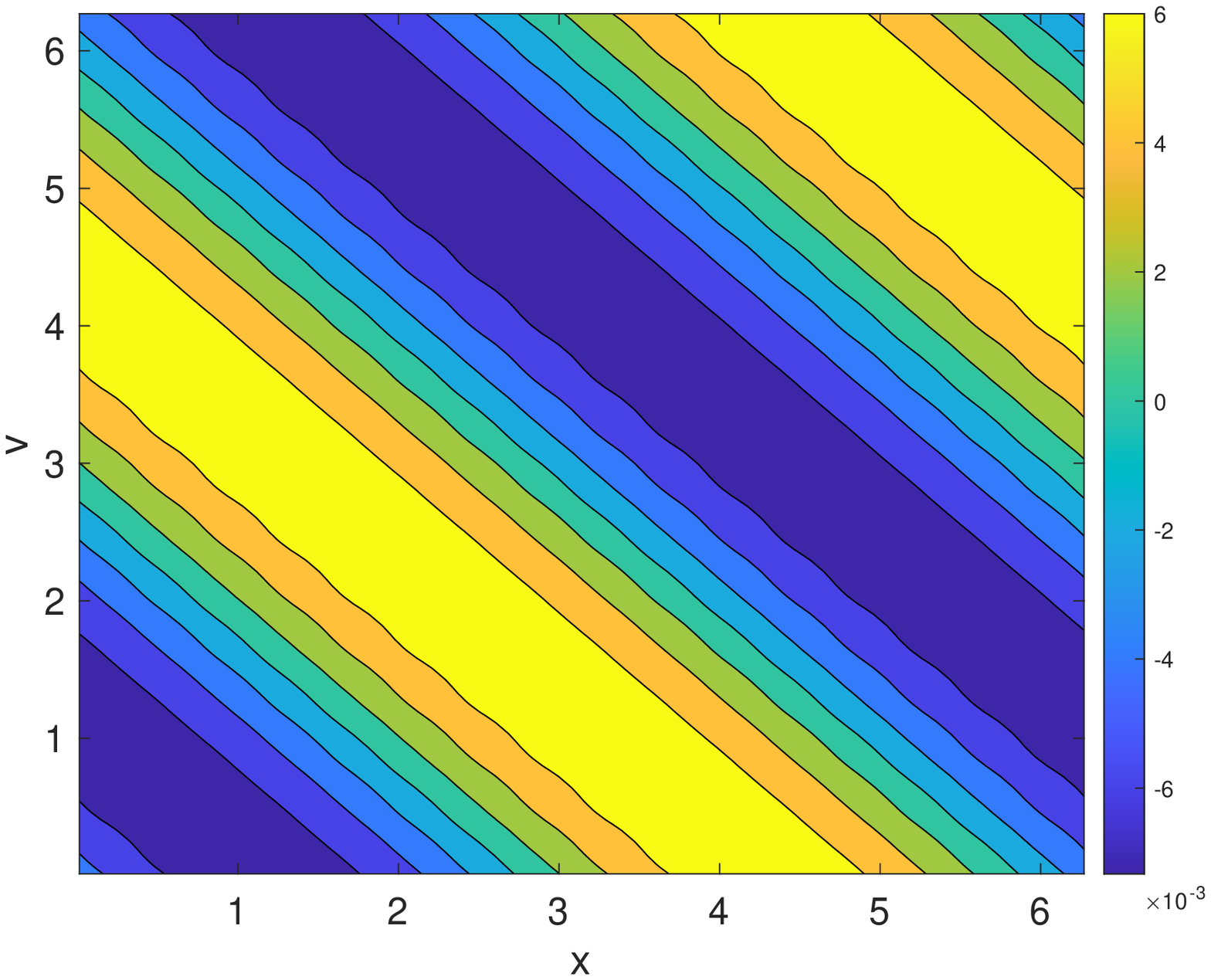}}
	\subfigure[$N_{x_1}\times N_{x_2}=32\times32$]{\includegraphics[height=40mm]{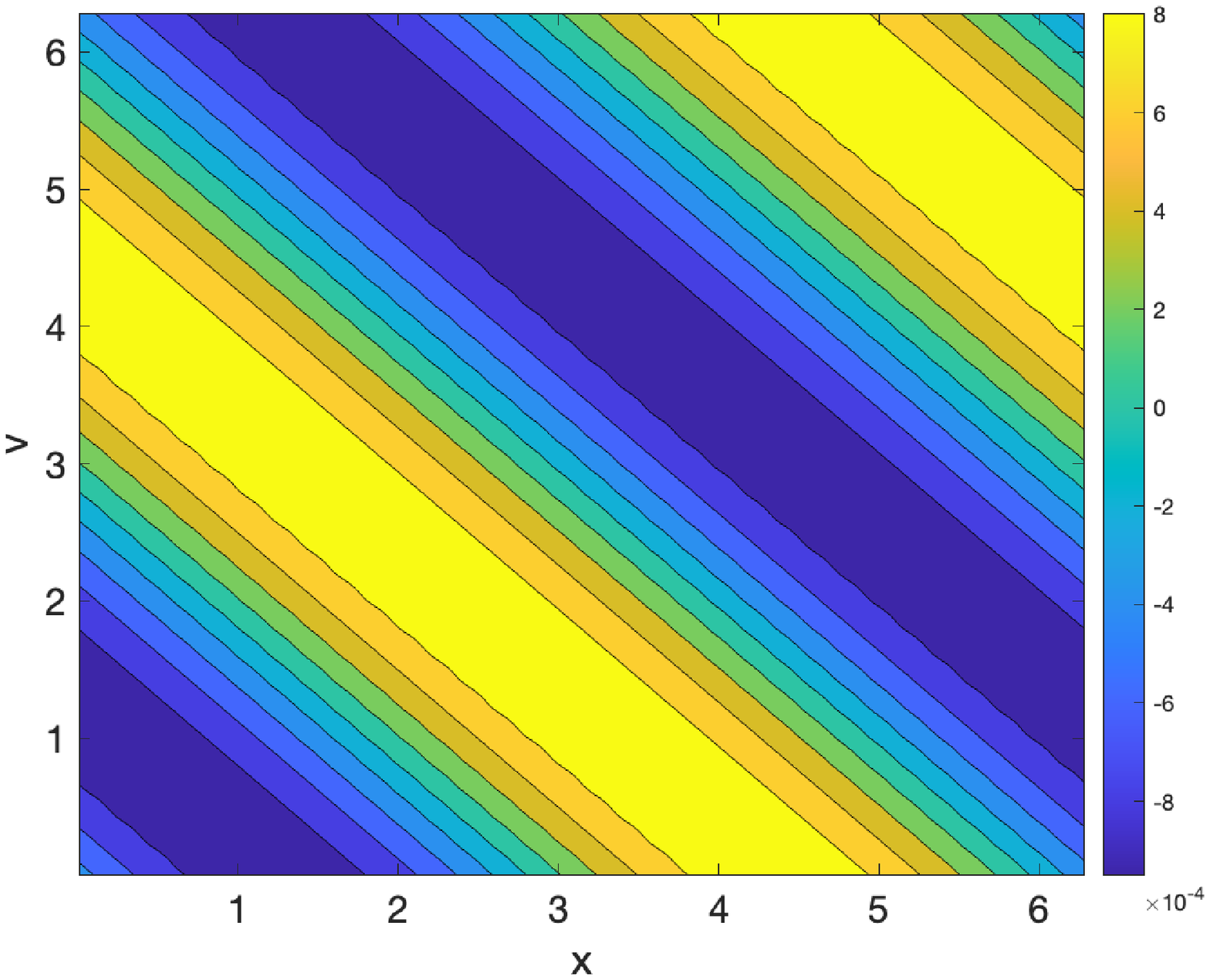}}
	\subfigure[$N_{x_1}\times N_{x_2}=64\times64$]{\includegraphics[height=40mm]{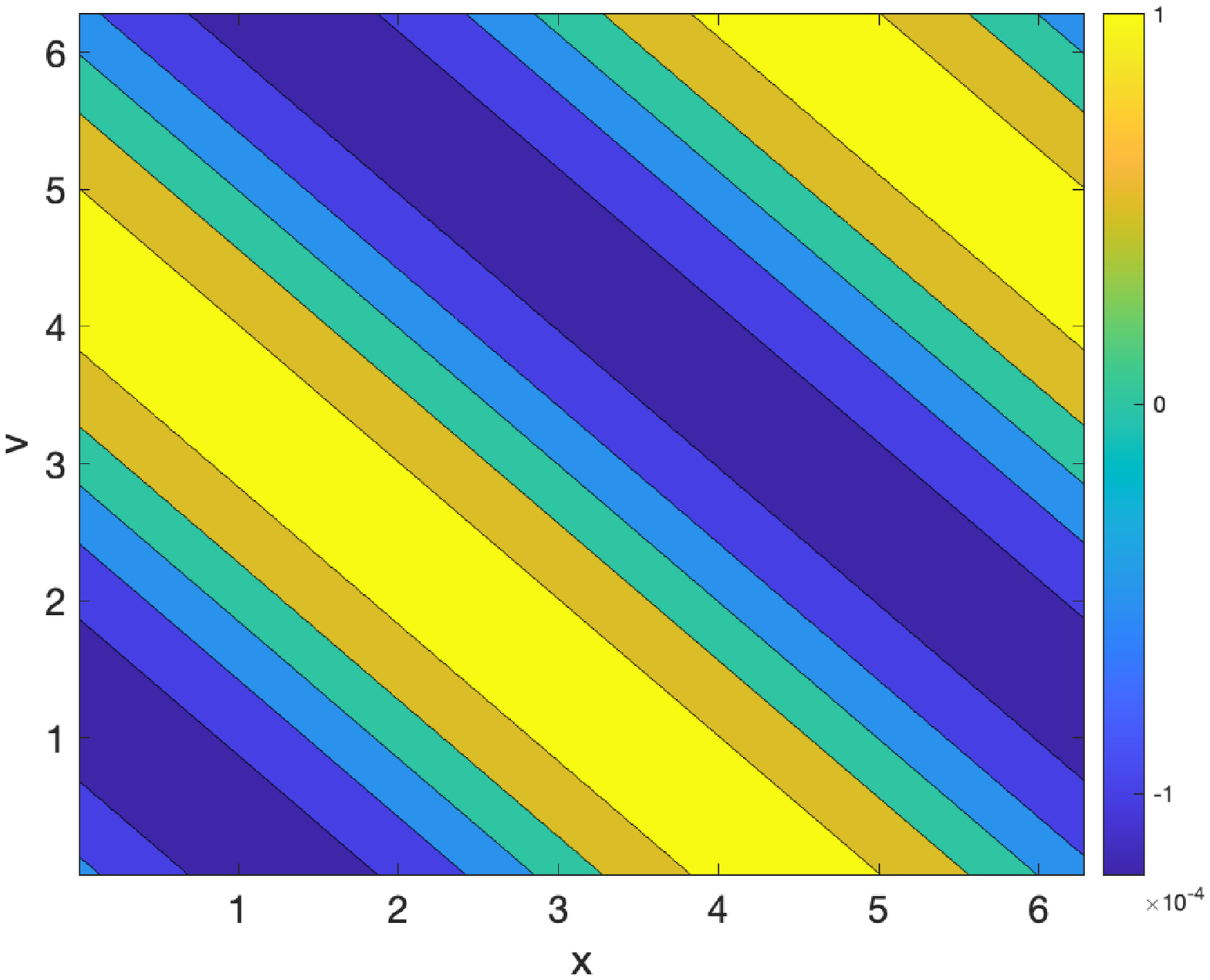}}
			\caption{Example \ref{ex:linear}. Error plots before and after post-processing at $t=1$. $k=1$. $\varepsilon=10^{-4}$.}
	\label{fig:linear_error}
\end{figure}

\begin{figure}[h!]
	\centering
		\includegraphics[height=50mm]{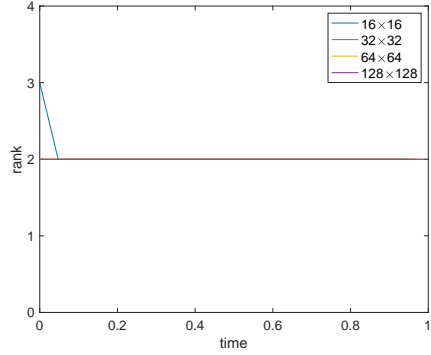}
	\caption{Example \ref{ex:linear}.  The time evolution of  ranks of the DG solutions. $k=1$. $\varepsilon=10^{-4}$.}
	\label{fig:linear_rank}
\end{figure}

\end{exa}

\subsection{1D1V Vlasov-Poisson system}
\begin{exa}\label{ex:forced}(A forced VP system \cite{de2012high}.) We simulate the following forced VP system 
\begin{align*}
\frac{\partial f}{\partial t} + vf_x + Ef_v &= \psi(x,v,t),\\
E(x,t)_x &= \rho(x,t) - \sqrt{\pi},
\end{align*}
where the source $\psi$ is defined as
$$
\psi(x, v, t)=\left(\left(\left(4 \sqrt{\pi}+2\right) v-\left(2 \pi+\sqrt{\pi}\right)\right) \sin (2 x-2 \pi t)
+\sqrt{\pi}(\frac14-v) \sin (4 x-4 \pi t)\right)\exp\left(-\frac{(4 v-1)^{2}}{4}\right) 
$$
so that the system has the exact solution
$$
\begin{aligned}
f(x, v, t) &=\left(2-\cos (2 x-2 \pi t)\right) \exp\left(-\frac{(4 v-1)^{2}}{4}\right), \\
E(x, t) &=-\frac{\sqrt{\pi}}{4} \sin (2 x-2 \pi t).
\end{aligned}
$$
Periodic boundary conditions are imposed. Note that the forced system satisfies the following the macroscopic system
\begin{align*}
\partial_{t} \rho + \bJ_x &= \frac{\sqrt{\pi}}{4}(1-4\pi)\sin(2x-2\pi t)\\
\partial_{t} \bJ +\bm{\sigma}_x&= \rho E + \frac{\sqrt{\pi}}{16}(3+ 4\sqrt{\pi} -4 \pi)\sin(2x-2\pi t) -\frac{\pi}{16}\sin(4x-4\pi t)\\
\partial_{t} e + \mathbf{Q}_x& = \frac{\sqrt{\pi}}{128}(7 + 8\sqrt{\pi}-12\pi)\sin(2x-2\pi t)-\frac{\pi}{64}\sin(4x-4\pi t) \\
& + \frac{\sqrt{\pi}}{8}\left( 2  - (1-4\pi)\cos(2x-2\pi t) \right)E.
\end{align*}
It is easily verified that the total mass, total momentum, and total energy of the system is conserved. For this example, we test the accuracy of  the proposed low rank DG method  and justify its ability  to conserve the physical invariants. In the simulation, we set the truncation threshold $\varepsilon=10^{-3}$ and set the computational domain $[-\pi,\pi]\times[-L_v,L_v]$ with $L_v=4$. The convergence study is summarized in Table \ref{tb:forced}, and $k+1$-th order of convergence is observed for both $L^2$ and $L^\infty$ errors. To showcase the flexibility of DG meshes, we perturb the uniform mesh randomly by up to 10\%. In Figures \ref{fig:forced1d_invar_k1}-\ref{fig:forced1d_invar_k2}, we report the time histories of  numerical ranks of the low rank DG solutions together with relative deviation of the total mass, total momentum and total energy for $k=1$ and $k=2$, respectively. It is observed that for $k=1$, the ranks of the numerical solution over a coarser mesh ($N_x\times N_v=16\times32$) are higher than that over a finer mesh and also increase over time, which is attributed to the large DG discretization error. For $k=2$, the ranks of the numerical solutions stay four during the time evolution. Hence, it is advantageous to employ a higher order DG discretization for this problem. Here the rank four comes from the a rank one from the exact solution, and rank three from conservative projection for mass, momentum and energy. We can observe that total mass, total momentum, and total energy are conserved up to machine precision for both $k=1$ and $k=2$ with all mesh sizes, indicating that the MaLoC property of the proposed method is independent of the degree $k$ and mesh size used.

\begin{table}[!hbp]
	\centering
	\caption{Example \ref{ex:forced}. $t=1$. Convergence study. The non-uniform meshes are obtained by randomly perturbing the element boundaries of uniform meshes up to 10\%.}
	\label{tb:forced}
	\begin{tabular}{|c|c|c|c|c|c|c|c|c|}
		\hline
	 \multirow{2}{*}{$N_x\times N_v$}  & \multicolumn{4}{|c|} {$k=1$} & \multicolumn{4}{|c|} {$k=2$} \\\cline{2-9}
		  & $L^2$ error & order & $L^\infty$ error & order & $L^2$ error & order & $L^\infty$ error & order\\\hline
$16\times32$	&	1.37E-01	&		&	1.33E-01	&		&	6.07E-03	&		&	9.73E-03	&		\\\hline
$32\times64$	&	3.83E-02	&	1.83	&	3.32E-02	&	2.01	&	9.15E-04	&	2.73	&	1.52E-03	&	2.68	\\\hline
$64\times128$	&	4.33E-03	&	3.15	&	6.31E-03	&	2.39	&	1.07E-04	&	3.10	&	1.91E-04	&	2.99	\\\hline
$128\times256$	&	1.12E-03	&	1.95	&	1.57E-03	&	2.01	&	1.23E-05	&	3.11	&	2.26E-05	&	3.08	\\\hline
	\end{tabular}
\end{table}

\begin{figure}[h!]
	\centering
		\subfigure[]{\includegraphics[height=50mm]{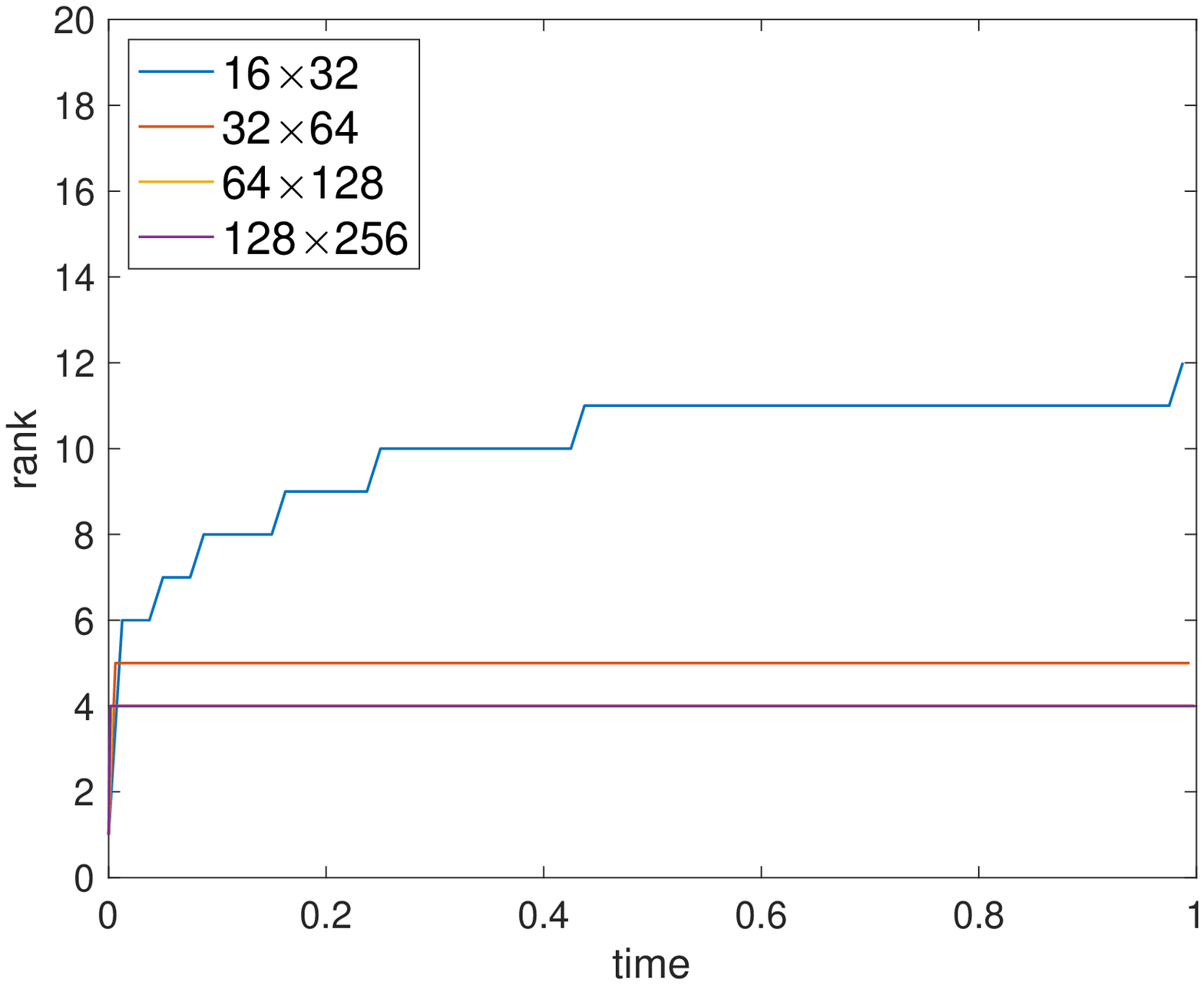}}
		\subfigure[]{\includegraphics[height=50mm]{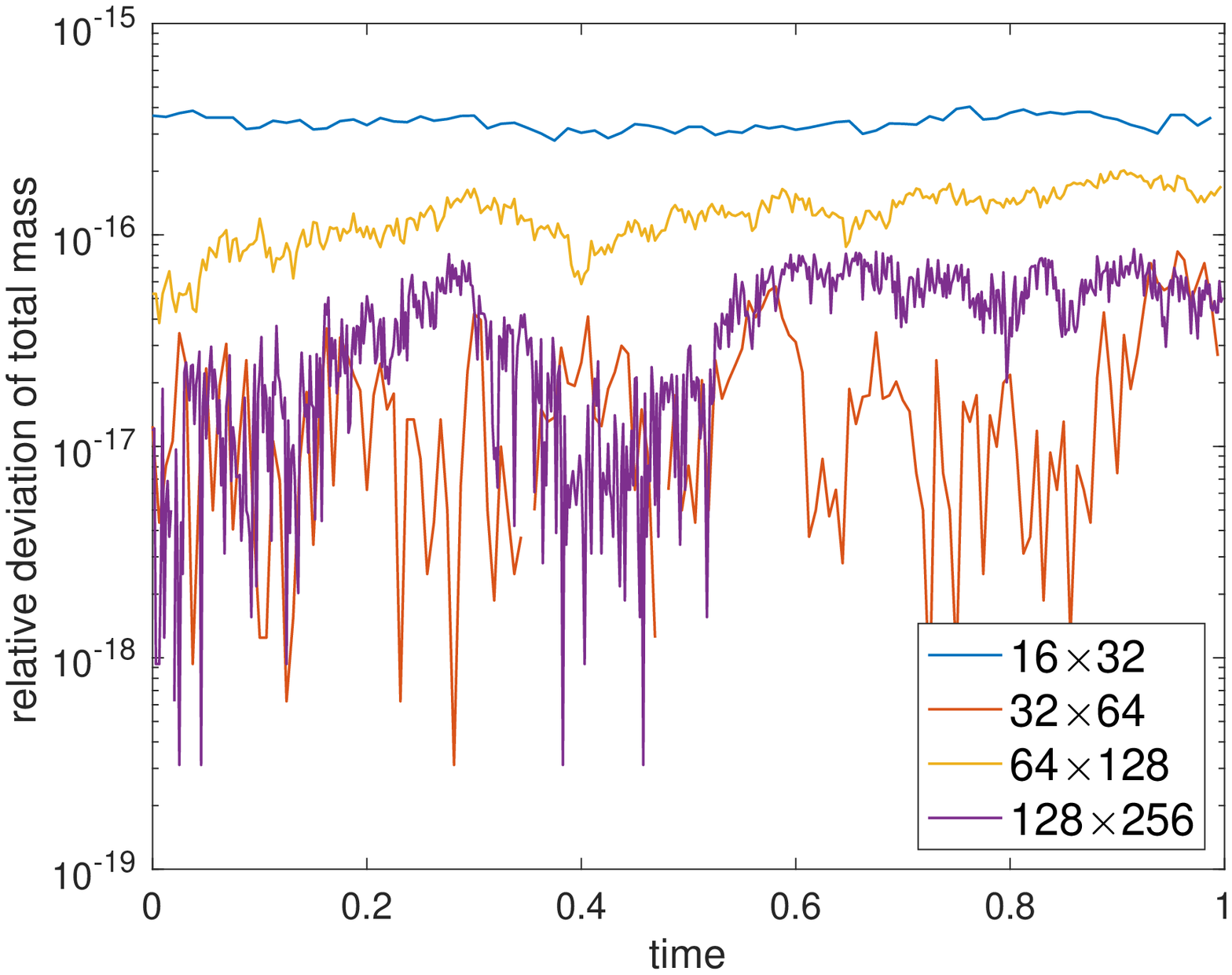}}
	       \subfigure[]{\includegraphics[height=50mm]{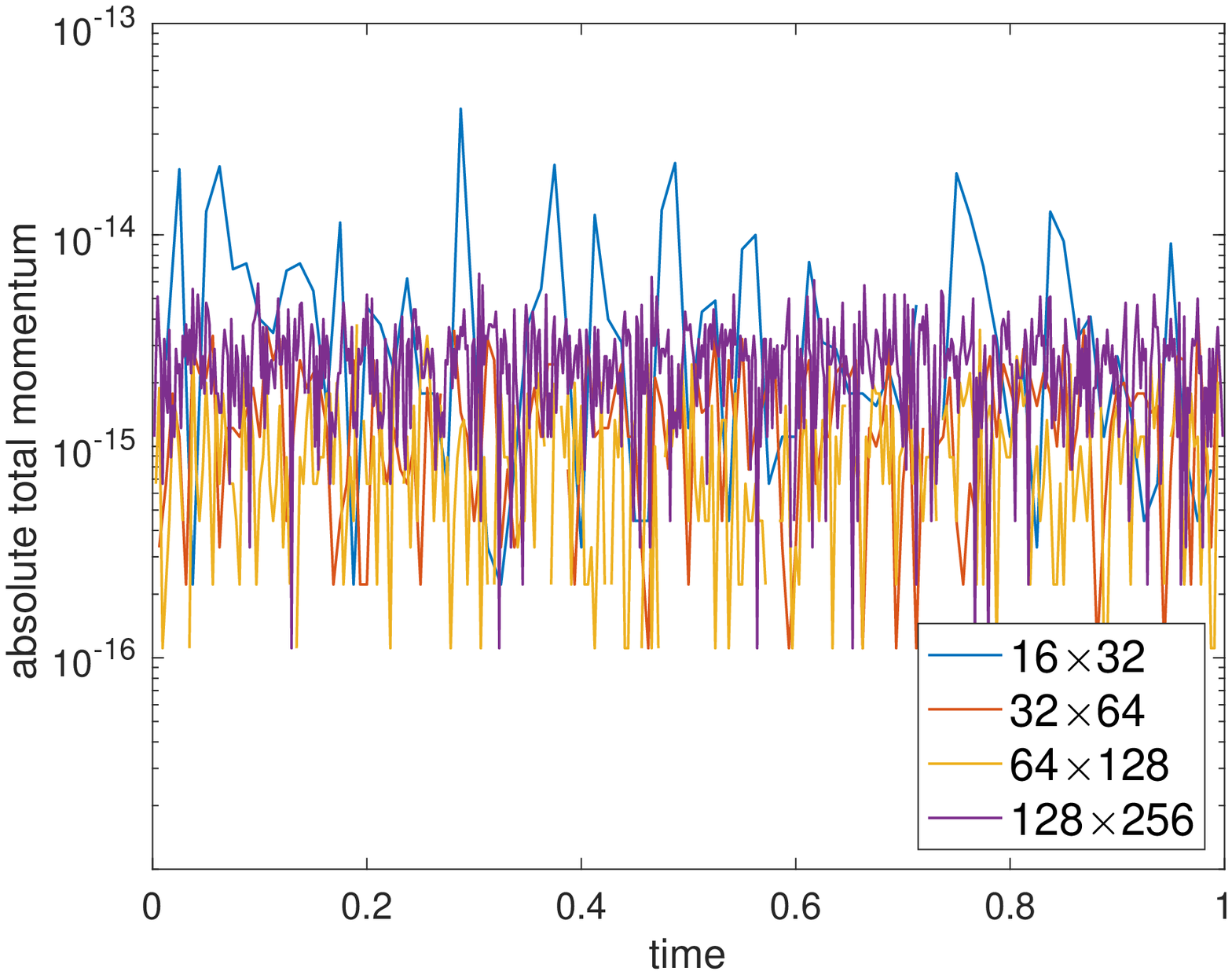}}
		\subfigure[]{\includegraphics[height=50mm]{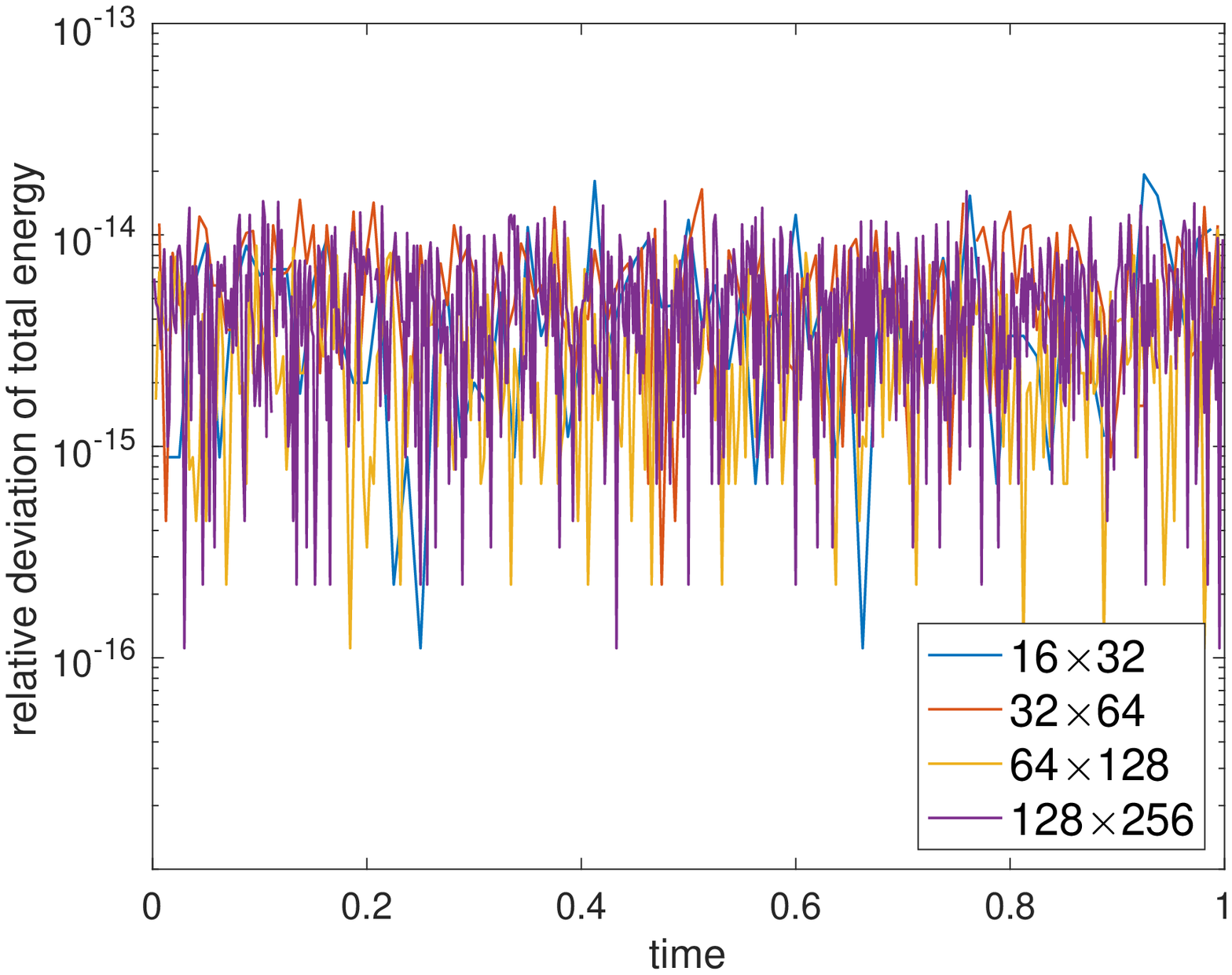}}
	\caption{Example \ref{ex:forced}.  The time evolution of  ranks of the numerical solutions (a), relative deviation of total mass (b),  total momentum (c), and total energy (d). $k=1$. $\varepsilon=10^{-3}$.}
	\label{fig:forced1d_invar_k1}
\end{figure}	

\begin{figure}[h!]
	\centering
		\subfigure[]{\includegraphics[height=50mm]{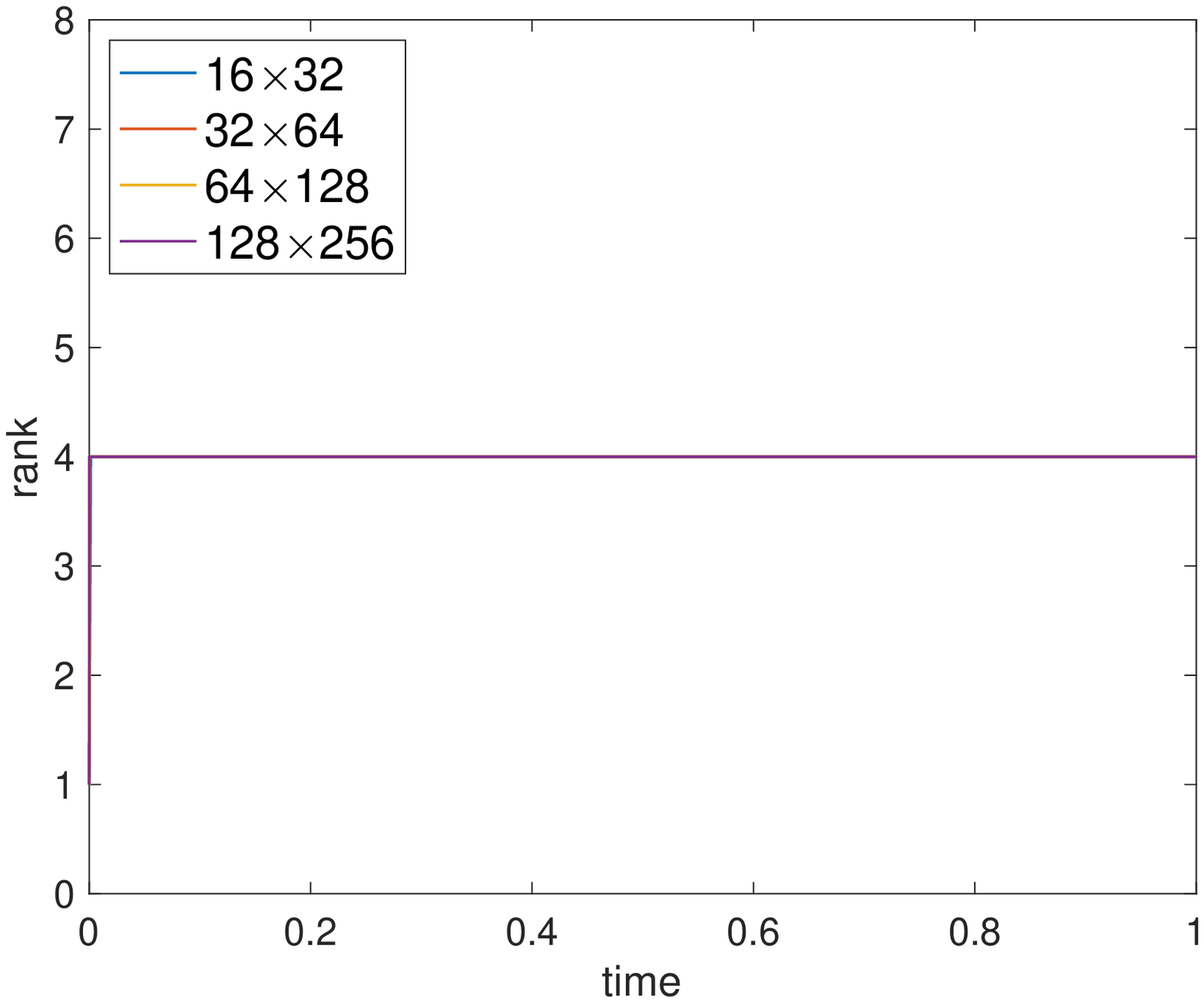}}
		\subfigure[]{\includegraphics[height=50mm]{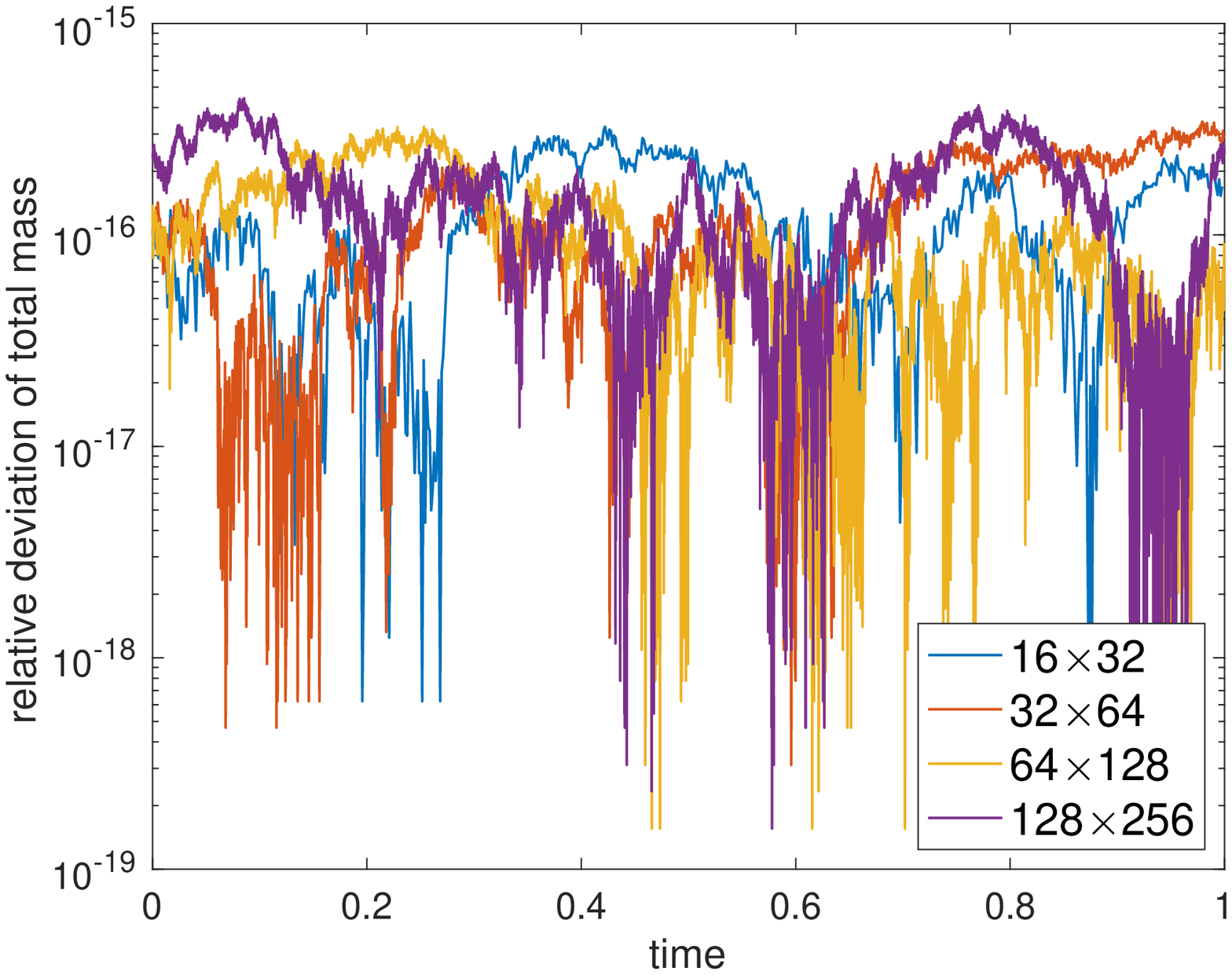}}
	       \subfigure[]{\includegraphics[height=50mm]{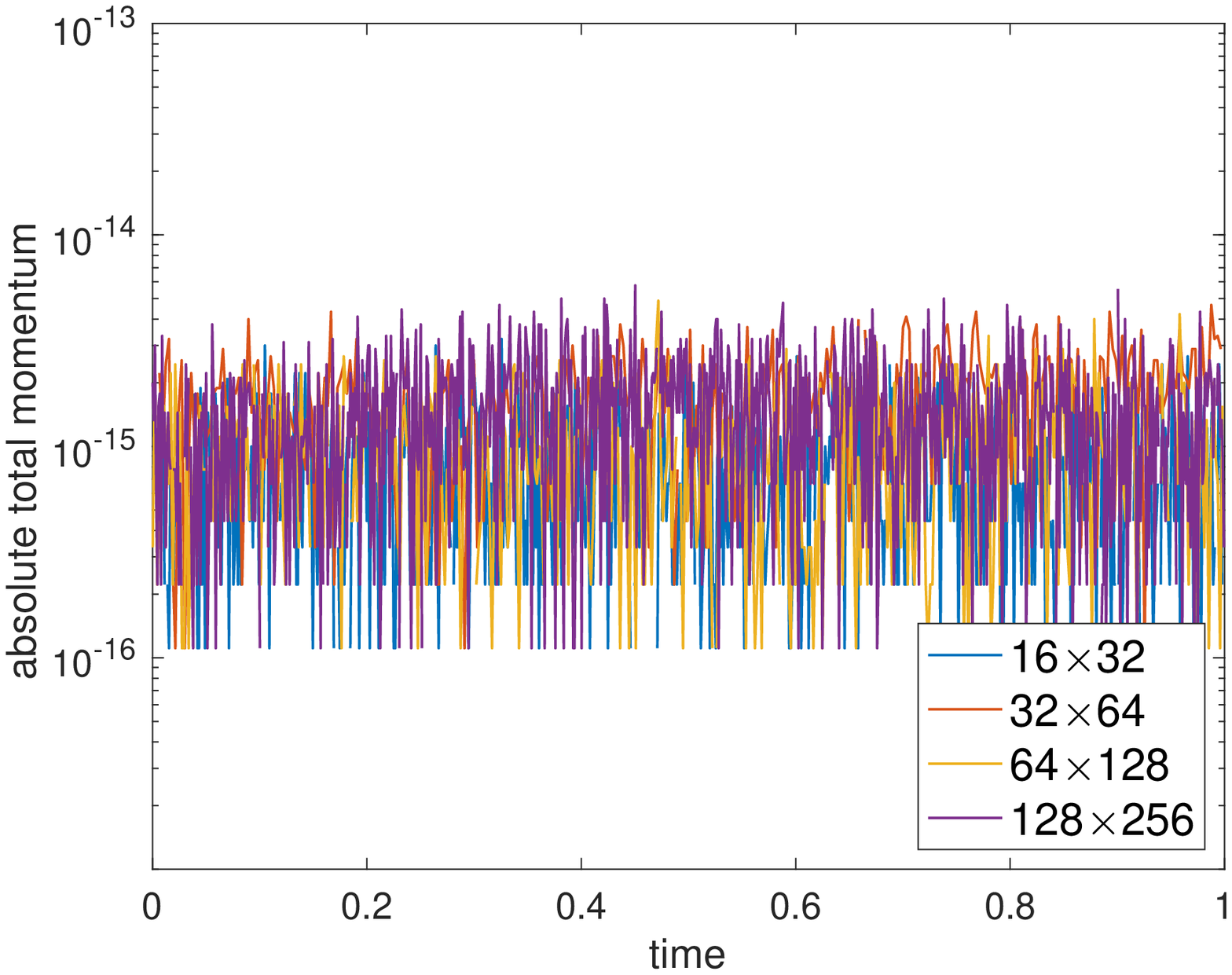}}
		\subfigure[]{\includegraphics[height=50mm]{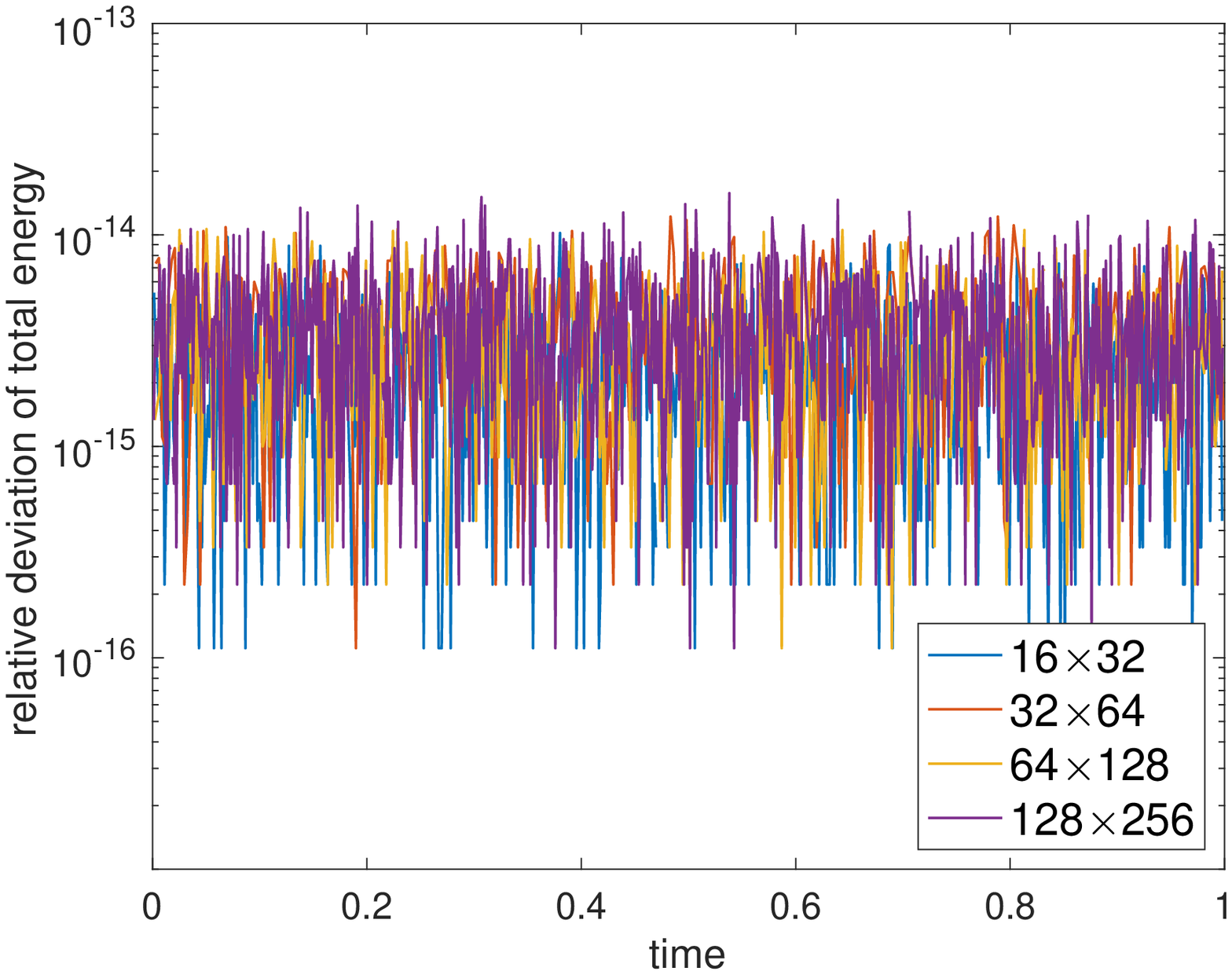}}
	\caption{Example \ref{ex:forced}.  The time evolution of  ranks of the numerical solutions (a), relative deviation of total mass (b),  total momentum (c), and total energy (d). $k=2$. $\varepsilon=10^{-3}$.}
	\label{fig:forced1d_invar_k2}
\end{figure}	

\end{exa}

\begin{exa}
	\label{ex:weak1d}(Weak Landau damping.) We simulate the weak Landau damping test with initial condition 
	\begin{equation}
		\label{eq:landau1d}
		f(x,v,t=0) =\frac{1}{\sqrt{2 \pi}} \left(1+\alpha  \cos \left(k x\right)\right)\exp\left(-\frac{v^2}{2}\right),
	\end{equation}
	where $\alpha=0.01$ and $k=0.5$. The computational domain is set to be $[0,L_x]\times[-L_v,L_v]$ with $L_x=2\pi/k$ and $L_v=6$. We set $\varepsilon=10^{-5}$ for truncation. In the simulation, we employ a set of non-uniform meshes by randomly perturbing uniform meshes up to 10\%. In Figure \ref{fig:weak1d_elec}, we report the time histories of the electric energy and numerical ranks of the low rank DG solutions for $k=1$ and $k=2$. It is observed that the low rank method is able to predict the correct damping rate of the electric energy. In addition, the method of larger $k$ over a finer mesh can better track the damping phenomenon with lower numerical ranks, justifying the computational advantages of using higher order DG discretization. In Figure \ref{fig:weak1d_invar}, we further report the  time histories of relative deviation of the total mass and total energy, together with absolute derivation of total momentum. We can see that the method is able to conserve the total mass, momentum and energy up to the machine precision.

\begin{figure}[h!]
	\centering
	\subfigure[$k=1$]{\includegraphics[height=60mm]{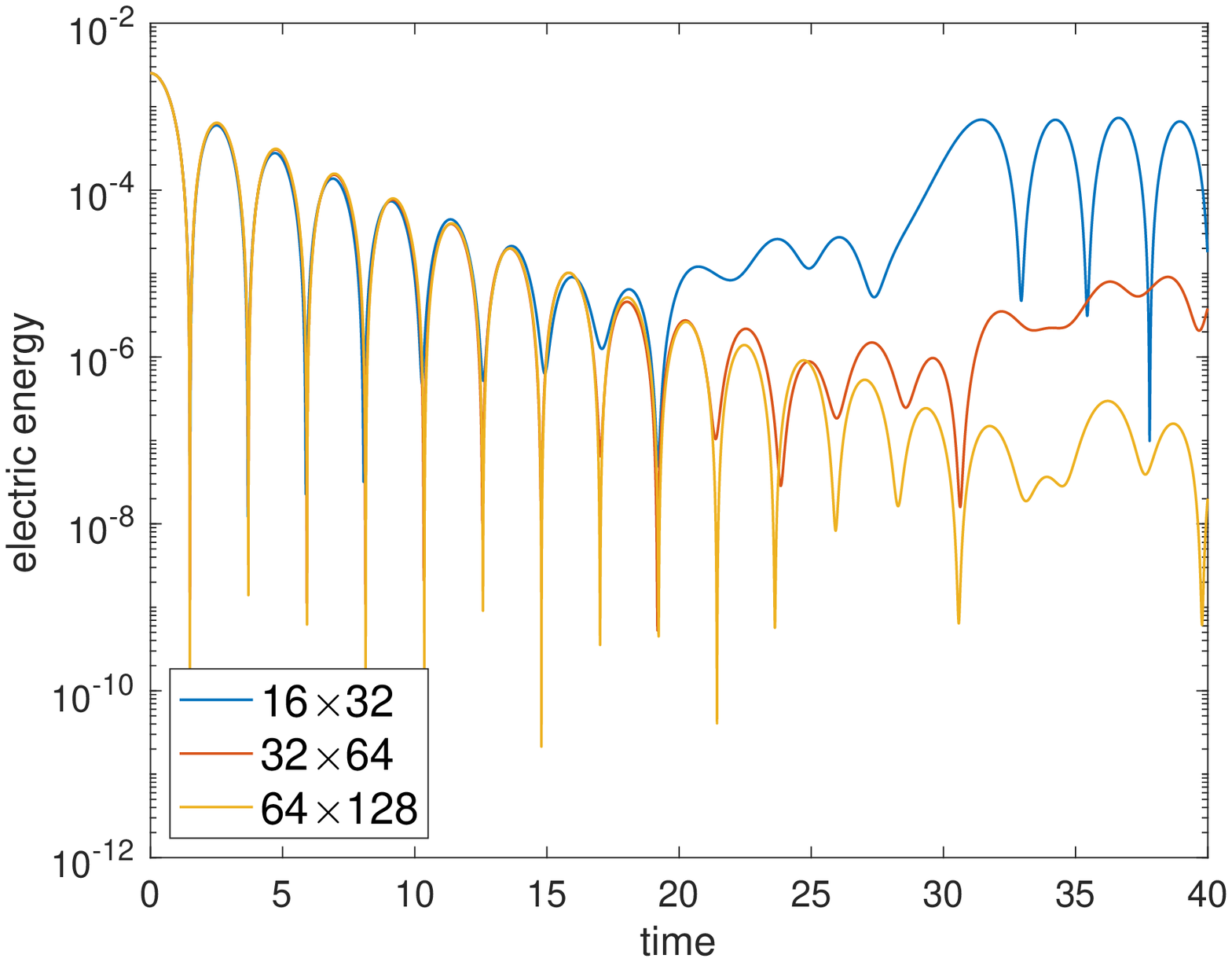}}
	\subfigure[$k=2$]{\includegraphics[height=60mm]{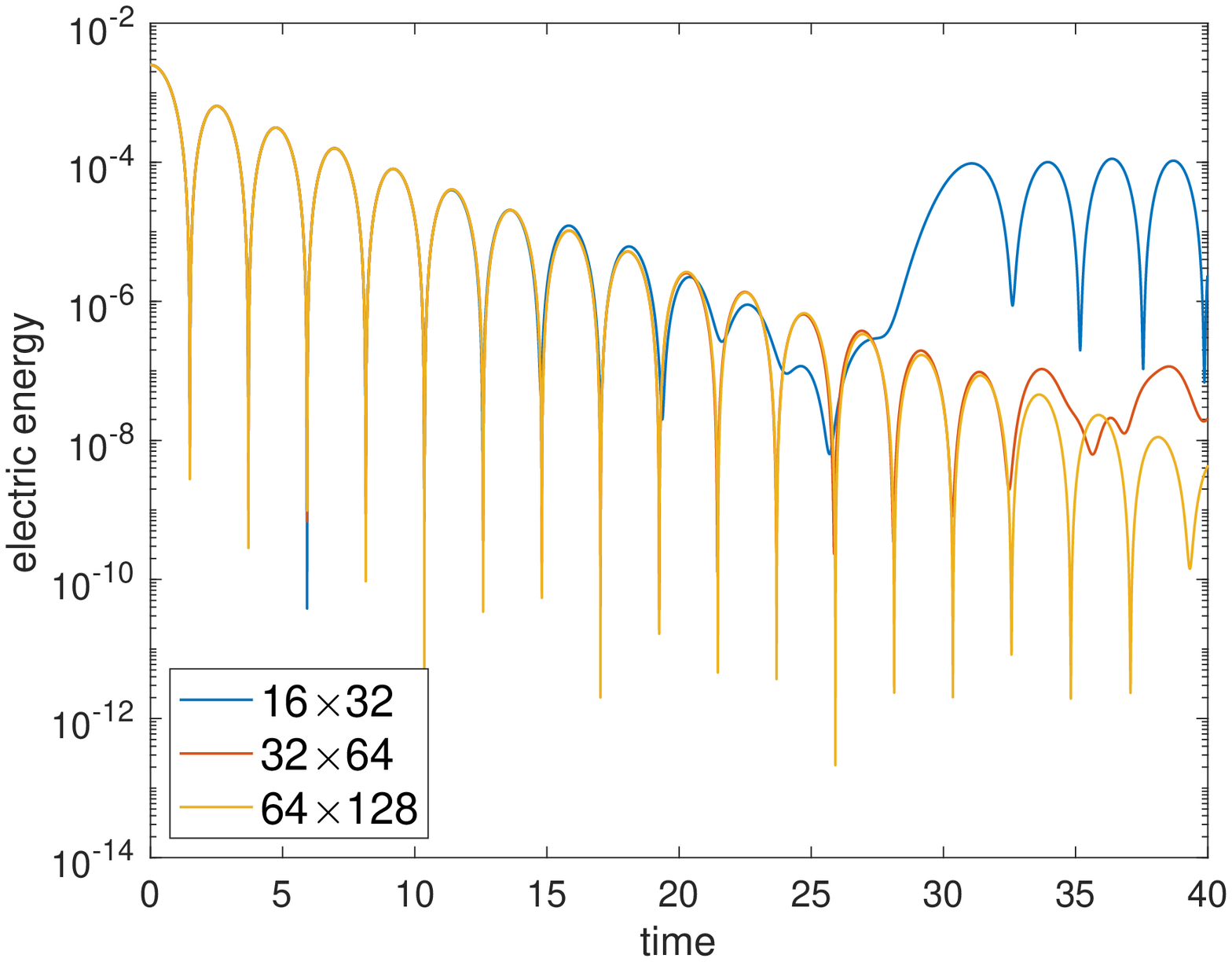}}
	\subfigure[$k=1$]{\includegraphics[height=60mm]{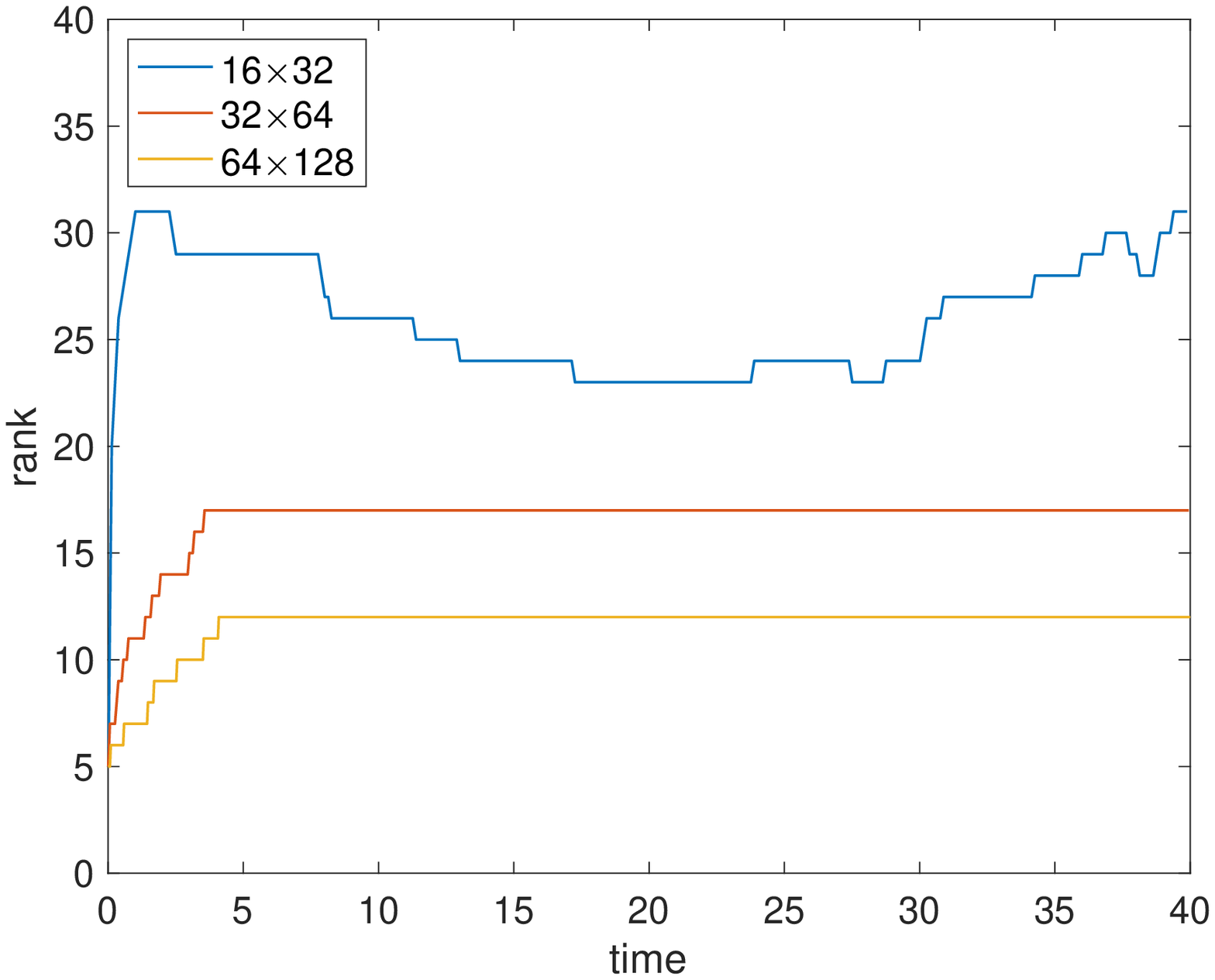}}
	\subfigure[$k=2$]{\includegraphics[height=60mm]{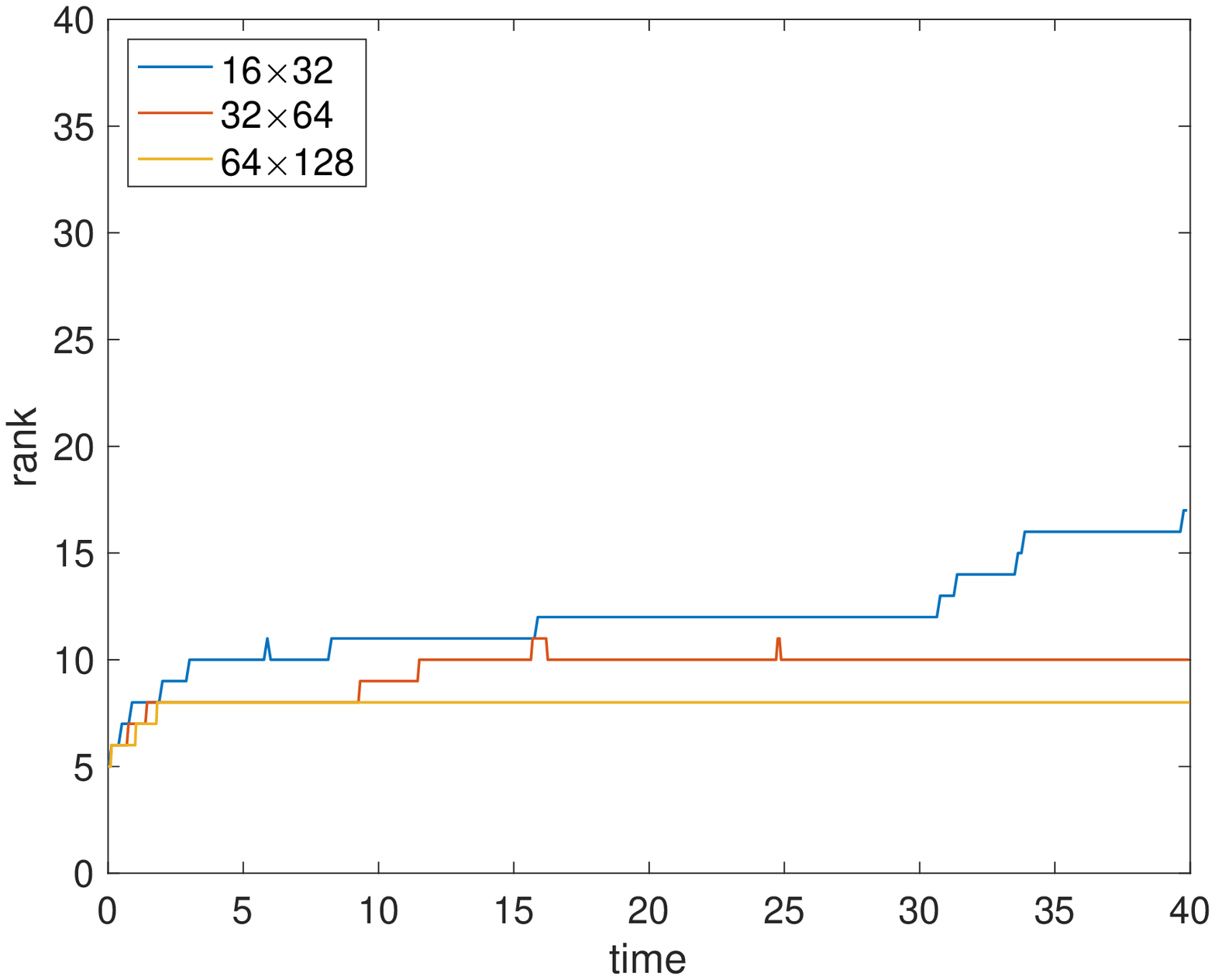}}
	\caption{Example \ref{ex:weak1d}.  The time evolution of  electric energy (a, b) and ranks of the low rank DG solutions (c, b). $\varepsilon=10^{-5}$.}
	\label{fig:weak1d_elec}
\end{figure}	

\begin{figure}[h!]
	\centering
	\subfigure[$k=1$]{\includegraphics[height=40mm]{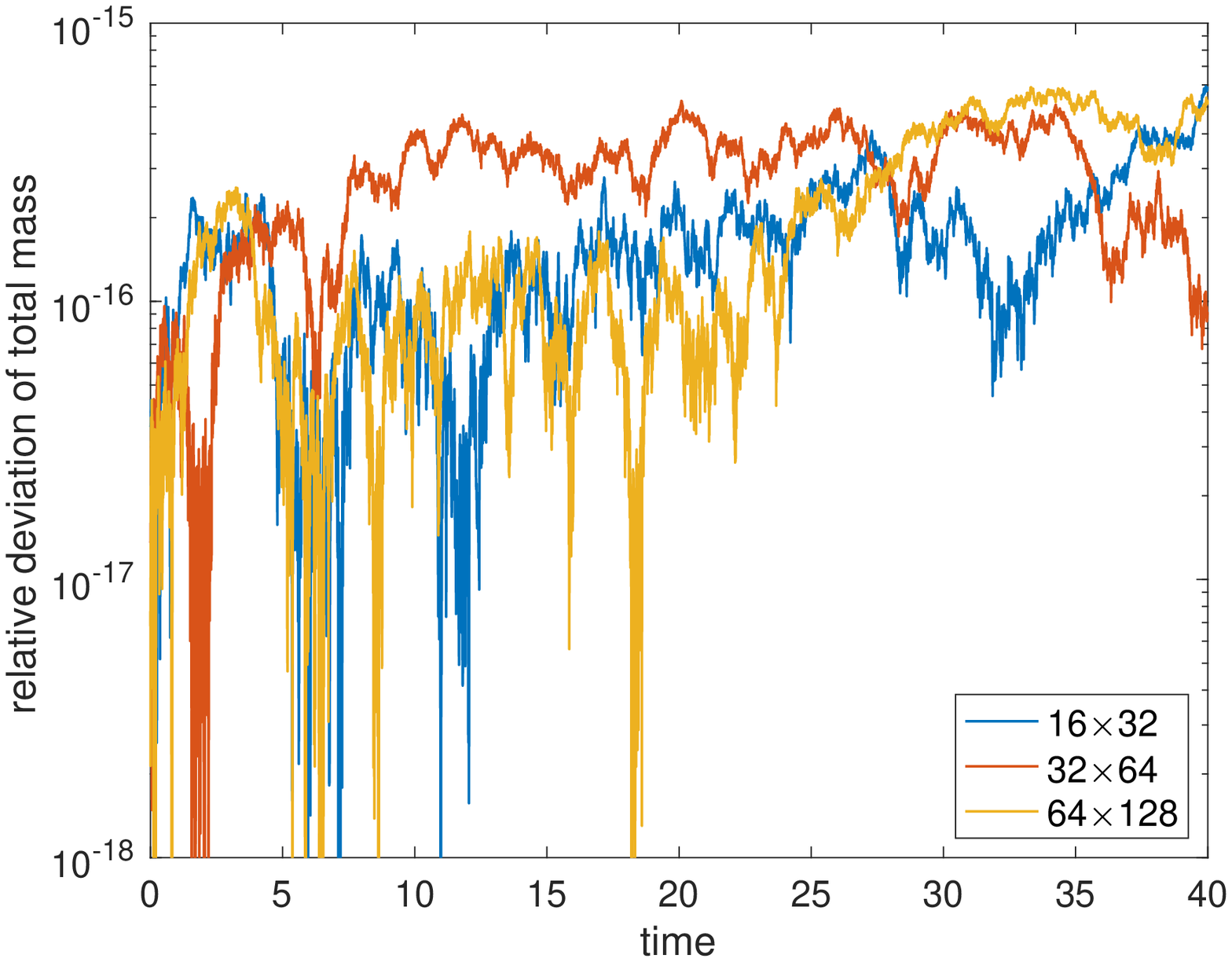}}
	\subfigure[$k=1$]{\includegraphics[height=40mm]{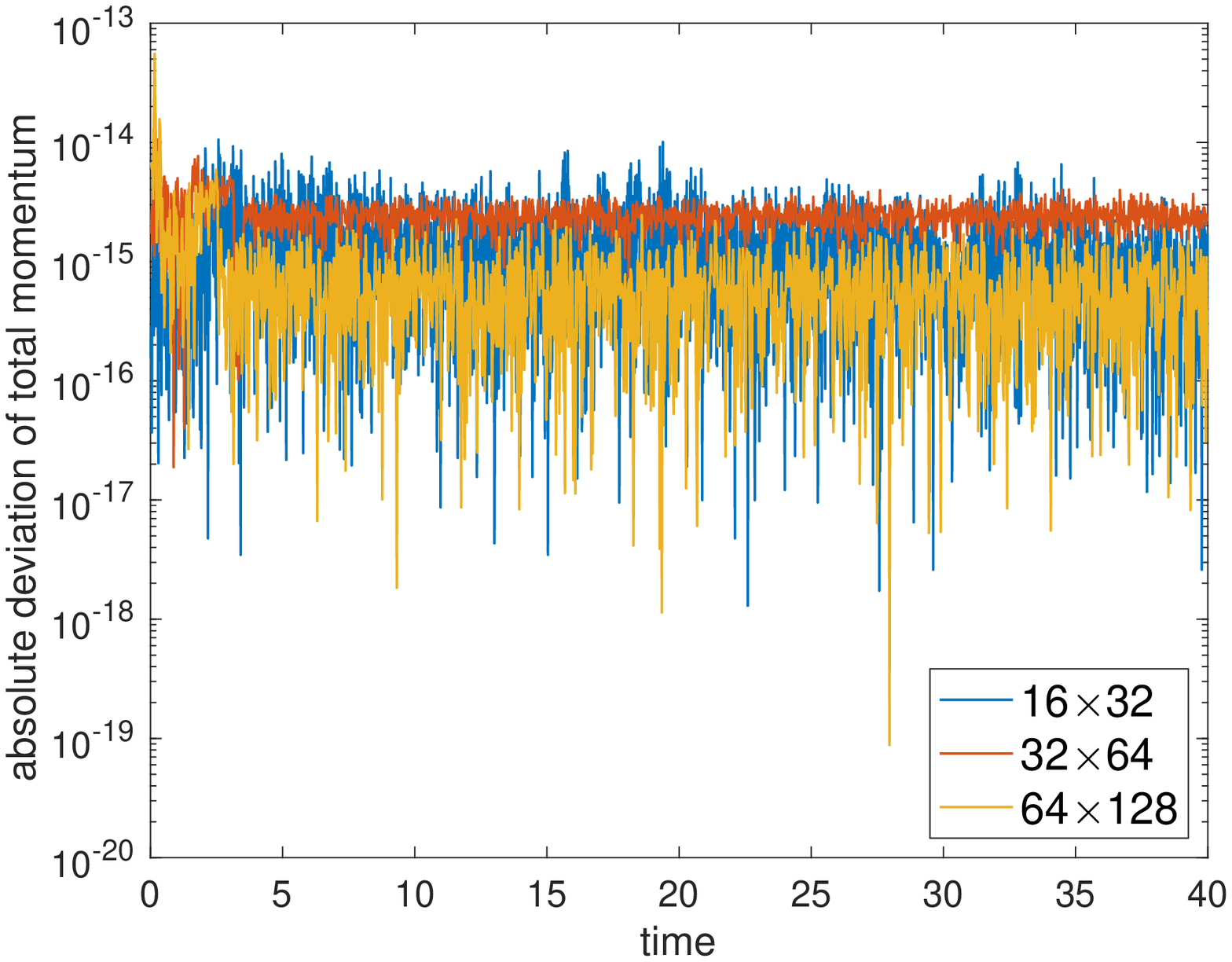}}
	\subfigure[$k=1$]{\includegraphics[height=40mm]{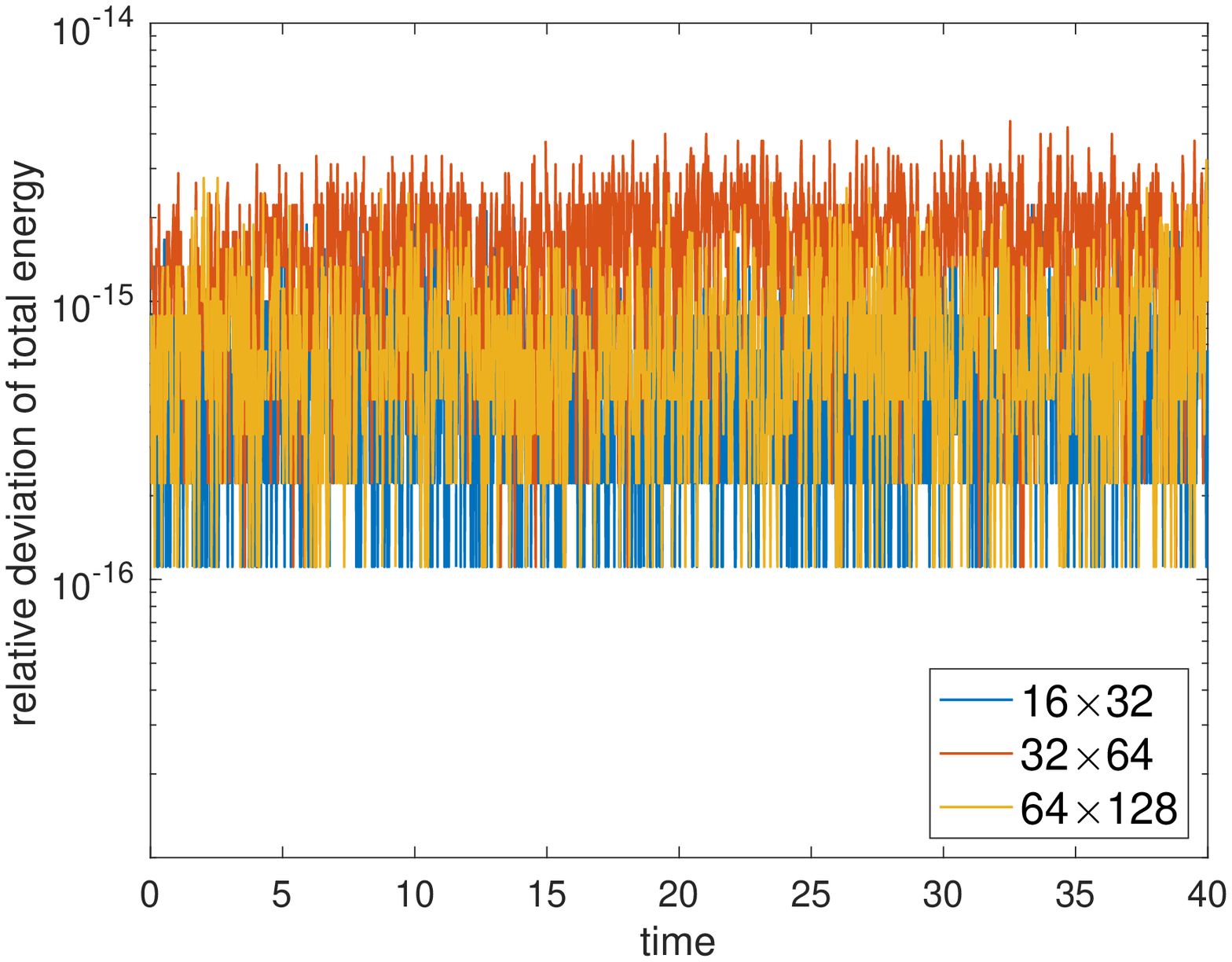}}
	\subfigure[$k=2$]{\includegraphics[height=40mm]{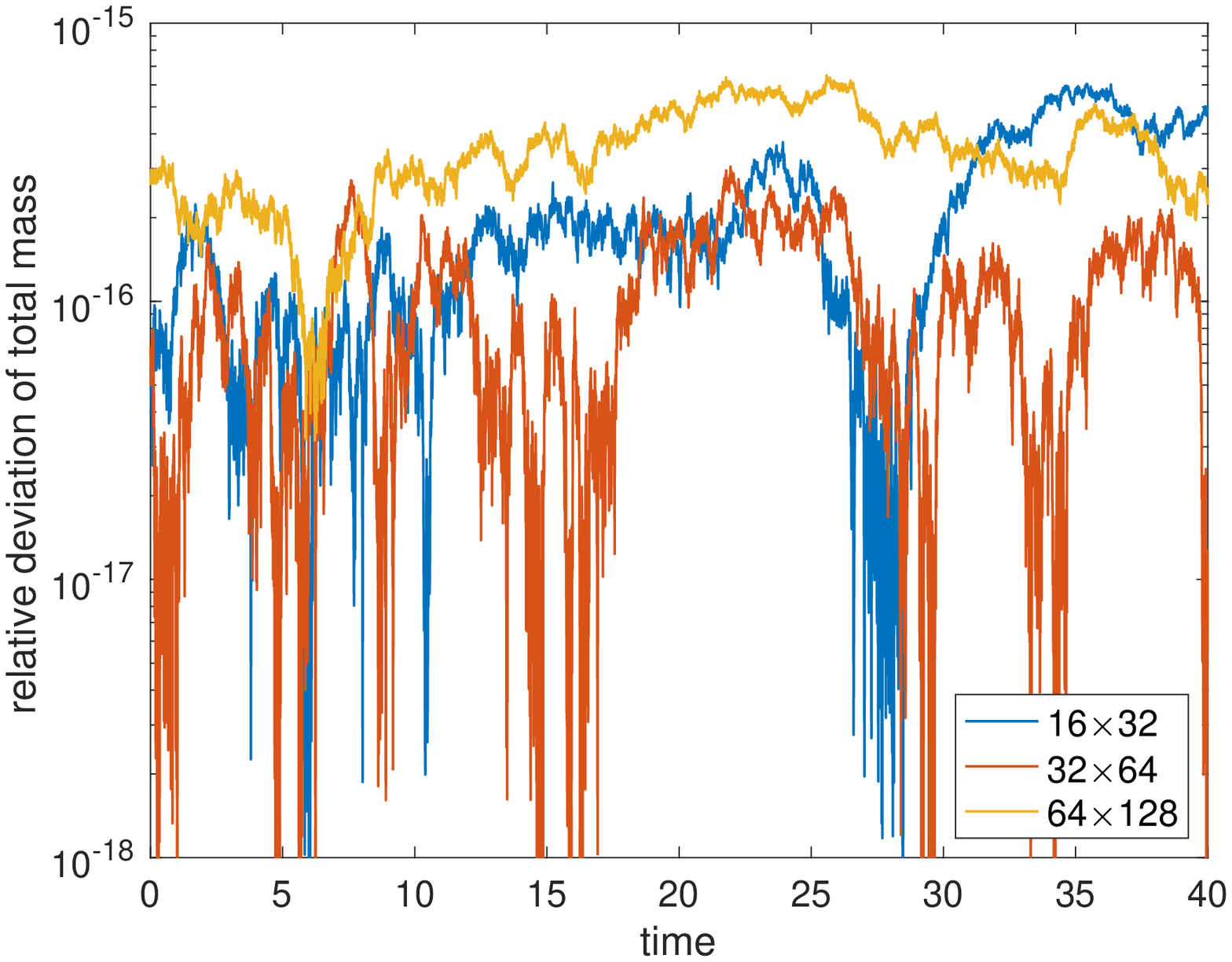}}
	\subfigure[$k=2$]{\includegraphics[height=40mm]{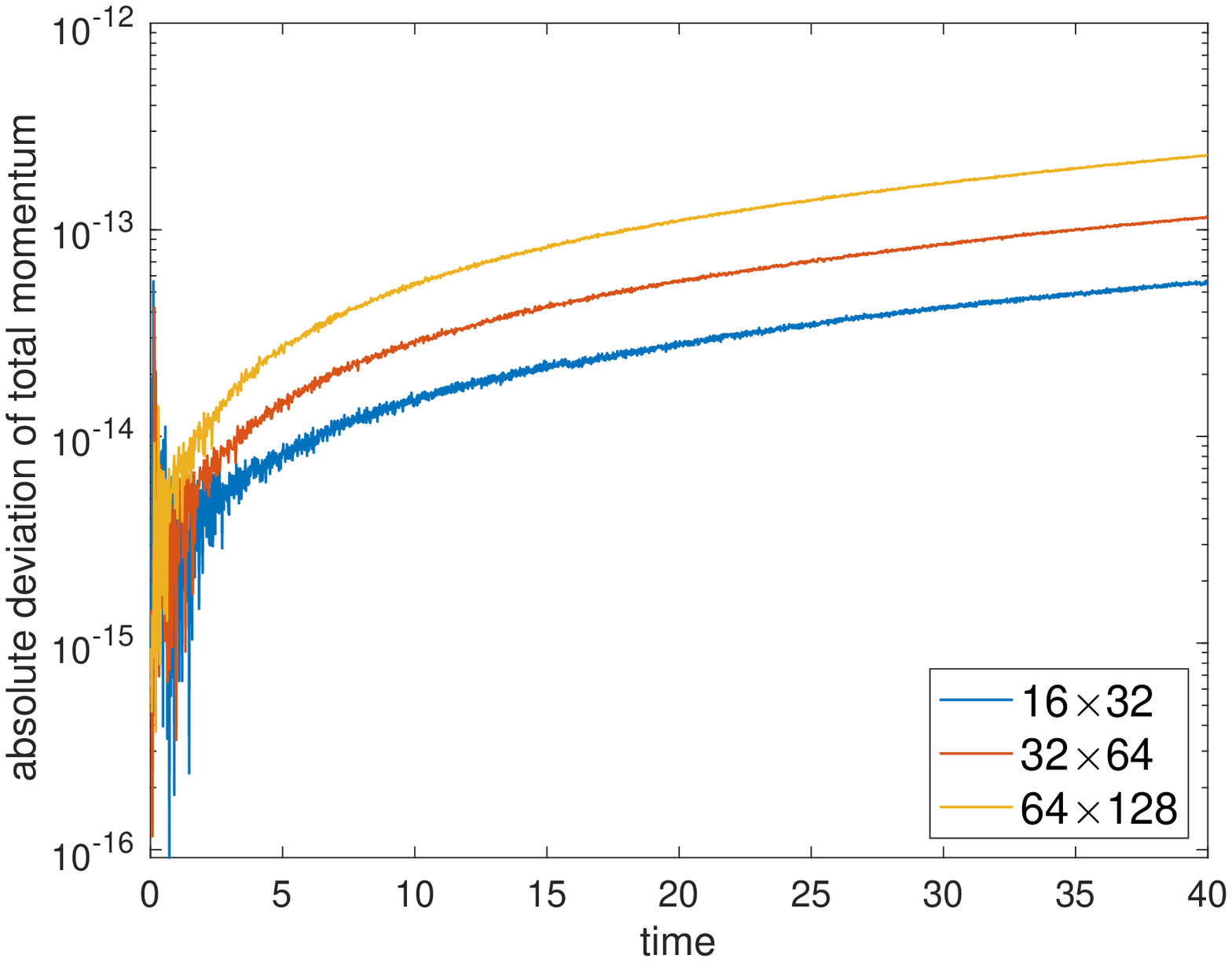}}
	\subfigure[$k=2$]{\includegraphics[height=40mm]{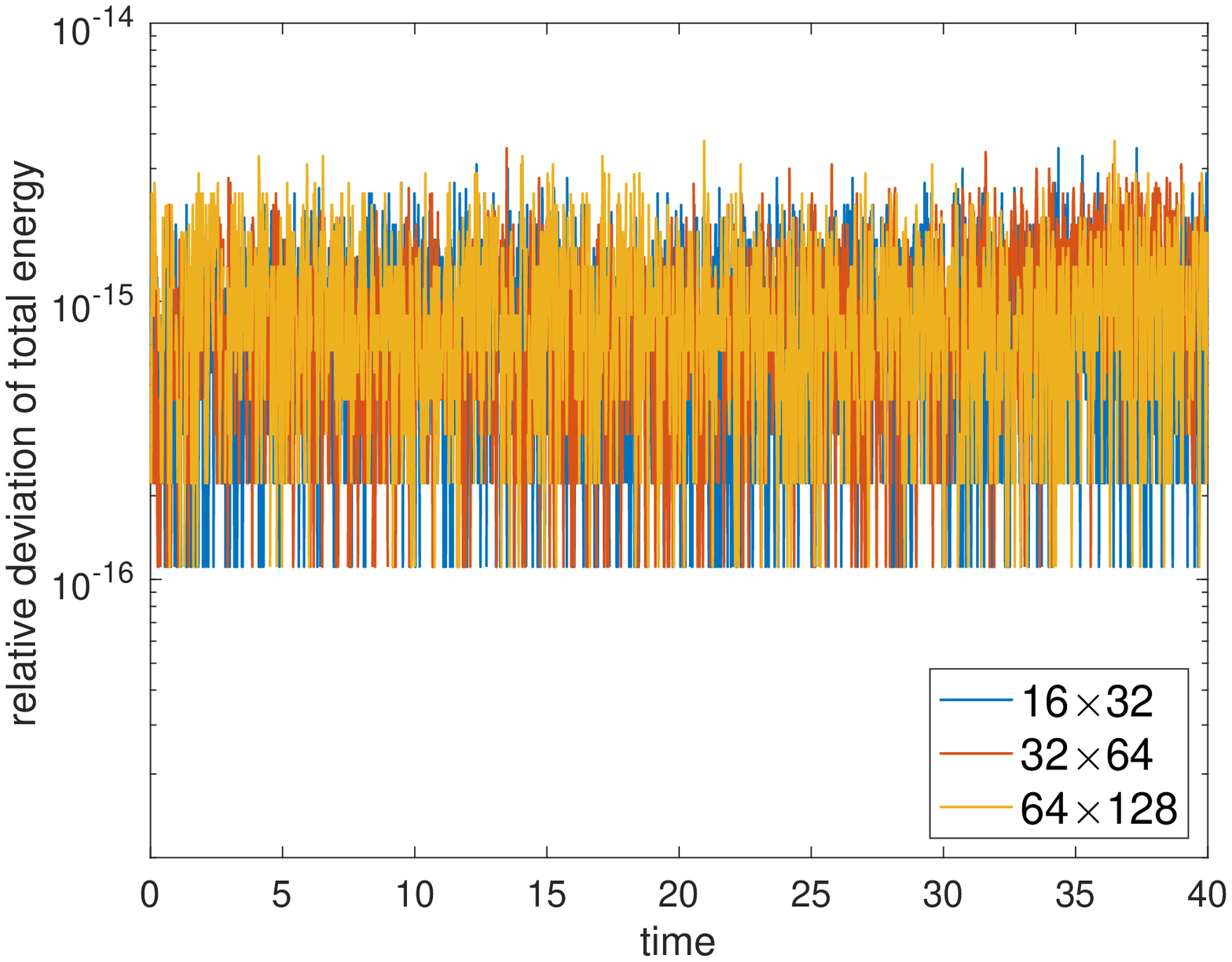}}
	\caption{Example \ref{ex:weak1d}.  The time evolution of  relative deviation of total mass (a, d), absolute deviation of total momentum (b, e), and relative deviation of total energy (c, f).  $\varepsilon=10^{-5}$.}
	\label{fig:weak1d_invar}
\end{figure}	
	
\end{exa}

\begin{exa}
	\label{ex:strong1d}(Strong Landau damping.) For this example, we simulate another benchmark problem, namely the strong Landau damping test. The initial condition is the same as \eqref{eq:landau1d} but with parameters
$\alpha=0.5$ and $k=0.5$. Unlike the previous example, due to the large perturbation the electric energy would decay at first and then start to increase until reaching saturation due to the large perturbation. The computational domain is set to be the same as the previous example. In the simulation, the truncation threshold is set to be $\varepsilon=10^{-3}$, and we employ  non-uniform meshes obtained by randomly perturbing uniform meshes up to 10\%. We summarize the simulation results in Figures \ref{fig:strong1d_elec}-\ref{fig:strong1d_invar}. 
It is observed that the proposed low rank DG method can adapt the numerical ranks to efficiently capture Vlasov dynamics. Furthermore, the method is able to conserve the physical invariants as expected up to machine precision as expected.

\begin{figure}[h!]
	\centering
	\subfigure[$k=1$]{\includegraphics[height=60mm]{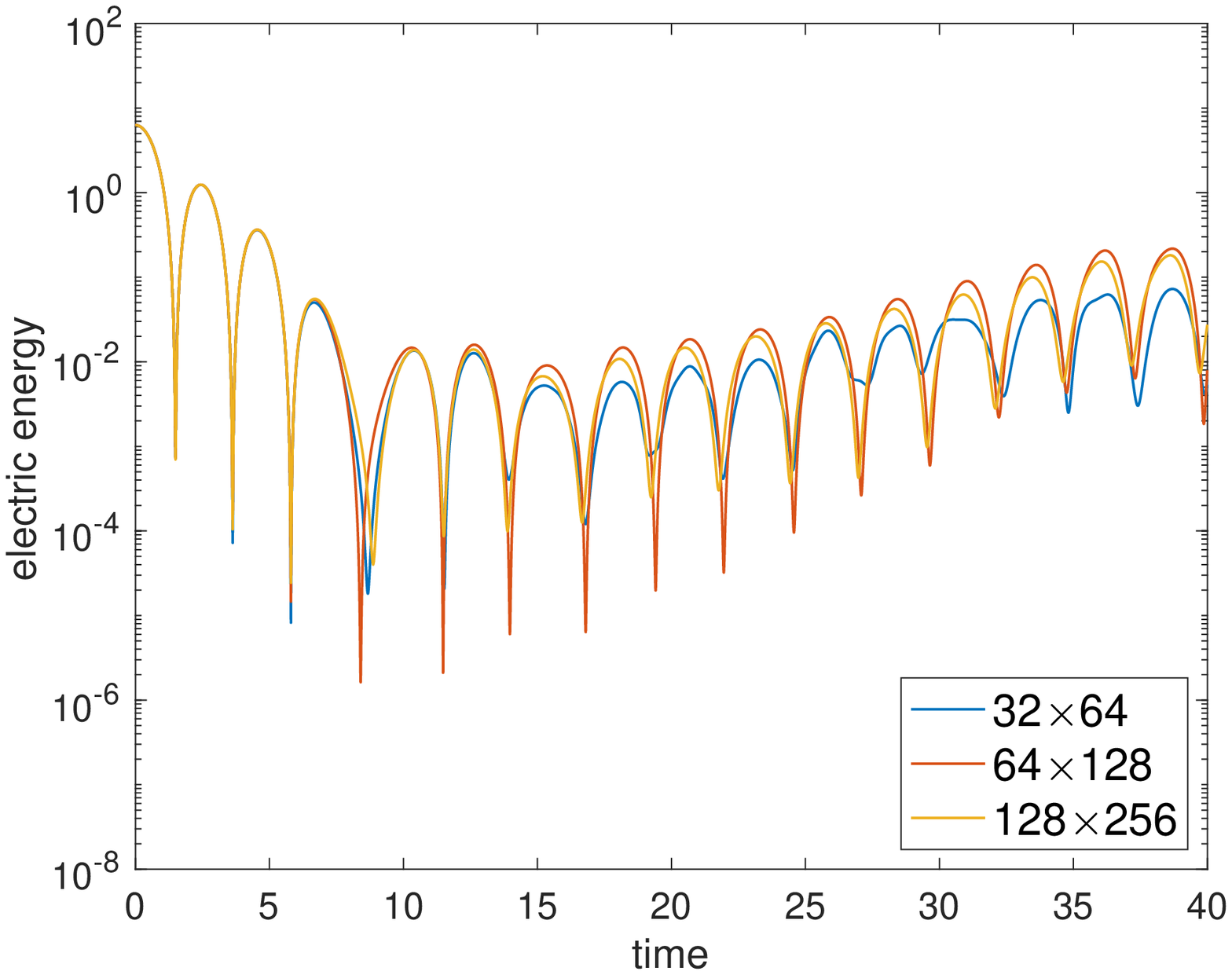}}
	\subfigure[$k=2$]{\includegraphics[height=60mm]{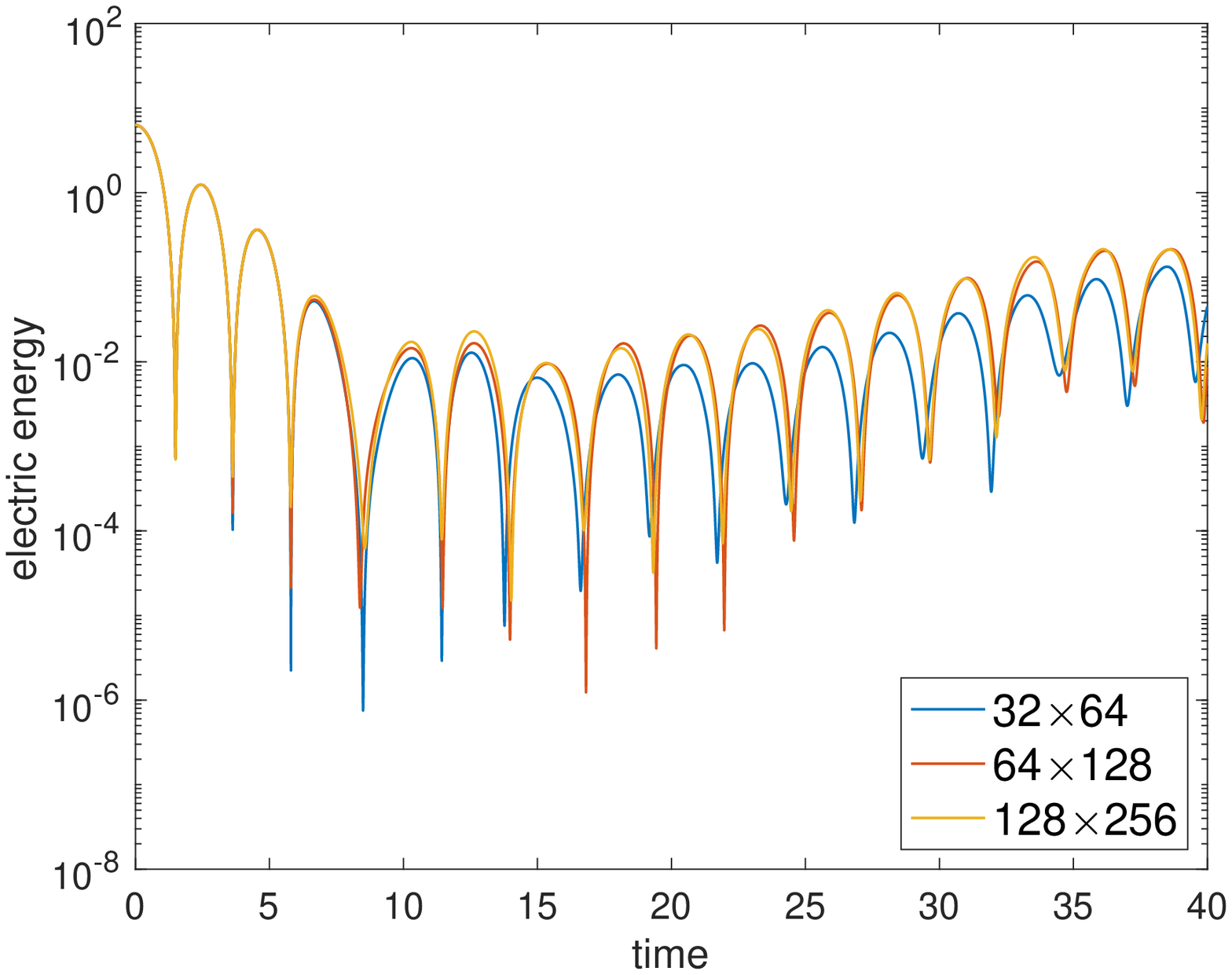}}
	\subfigure[$k=1$]{\includegraphics[height=60mm]{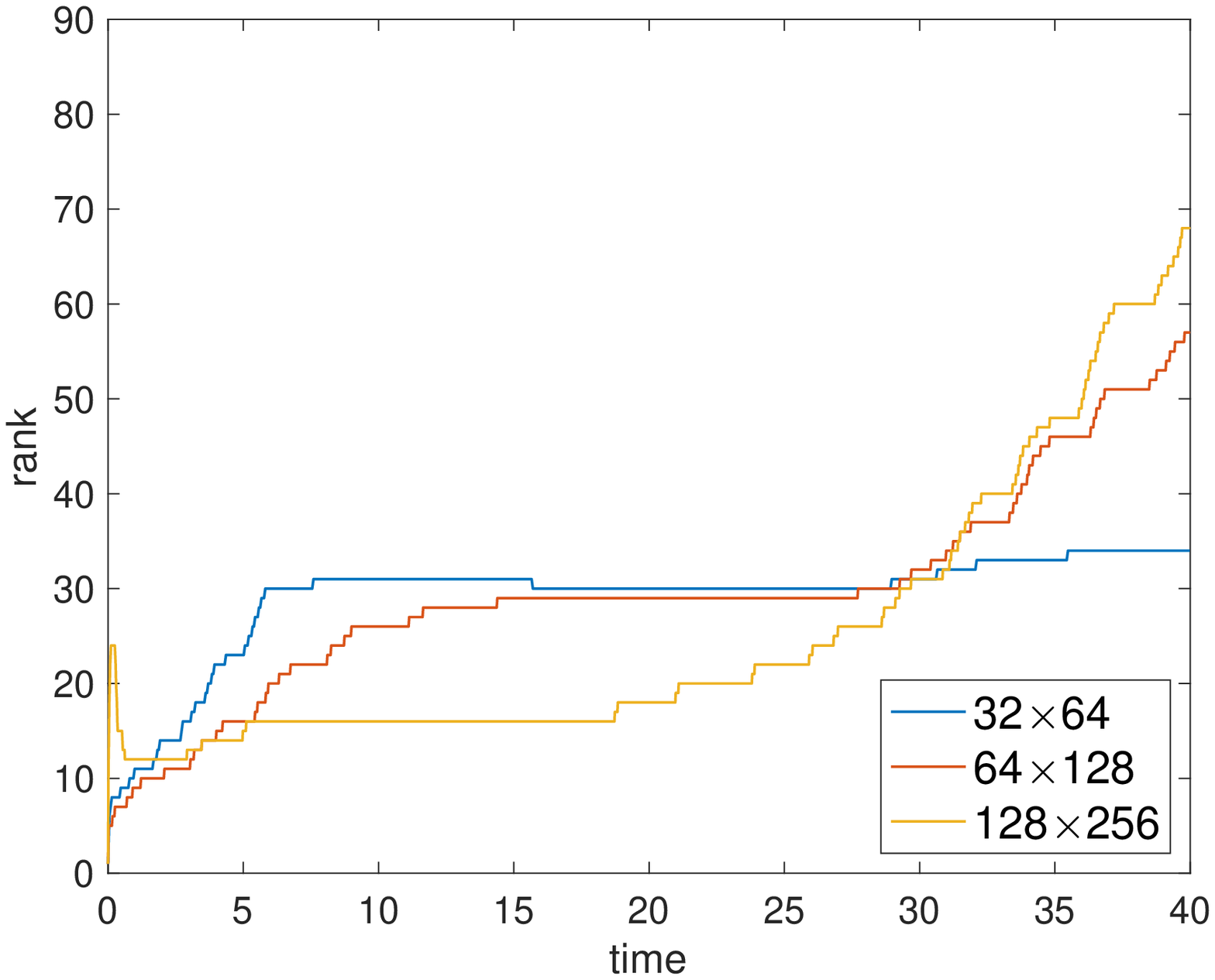}}
	\subfigure[$k=2$]{\includegraphics[height=60mm]{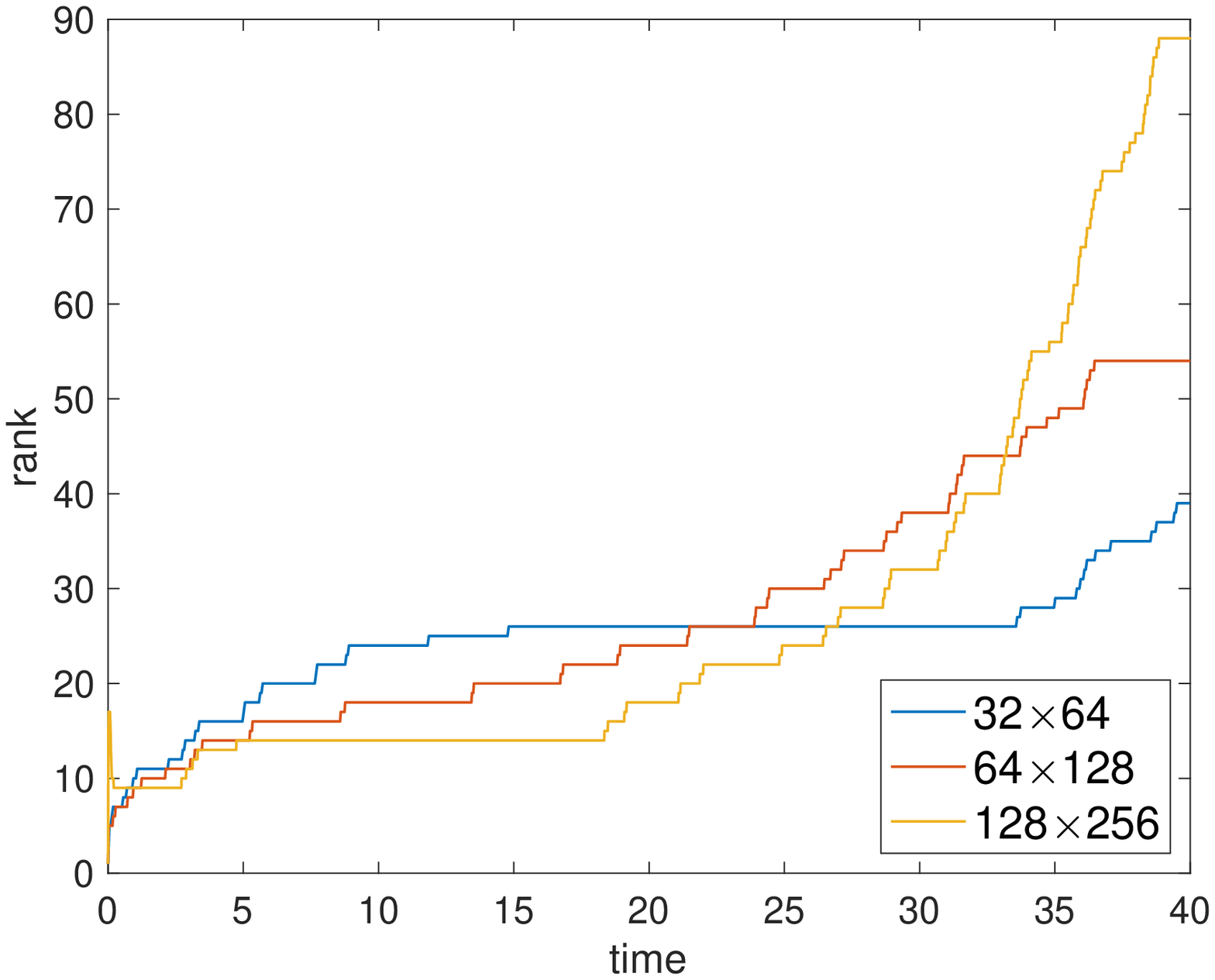}}
	\caption{Example \ref{ex:strong1d}.  The time evolution of  electric energy (a, b) and ranks of the low rank DG solutions (c, b). $\varepsilon=10^{-3}$.}
	\label{fig:strong1d_elec}
\end{figure}	

\begin{figure}[h!]
	\centering
	\subfigure[$k=1$]{\includegraphics[height=40mm]{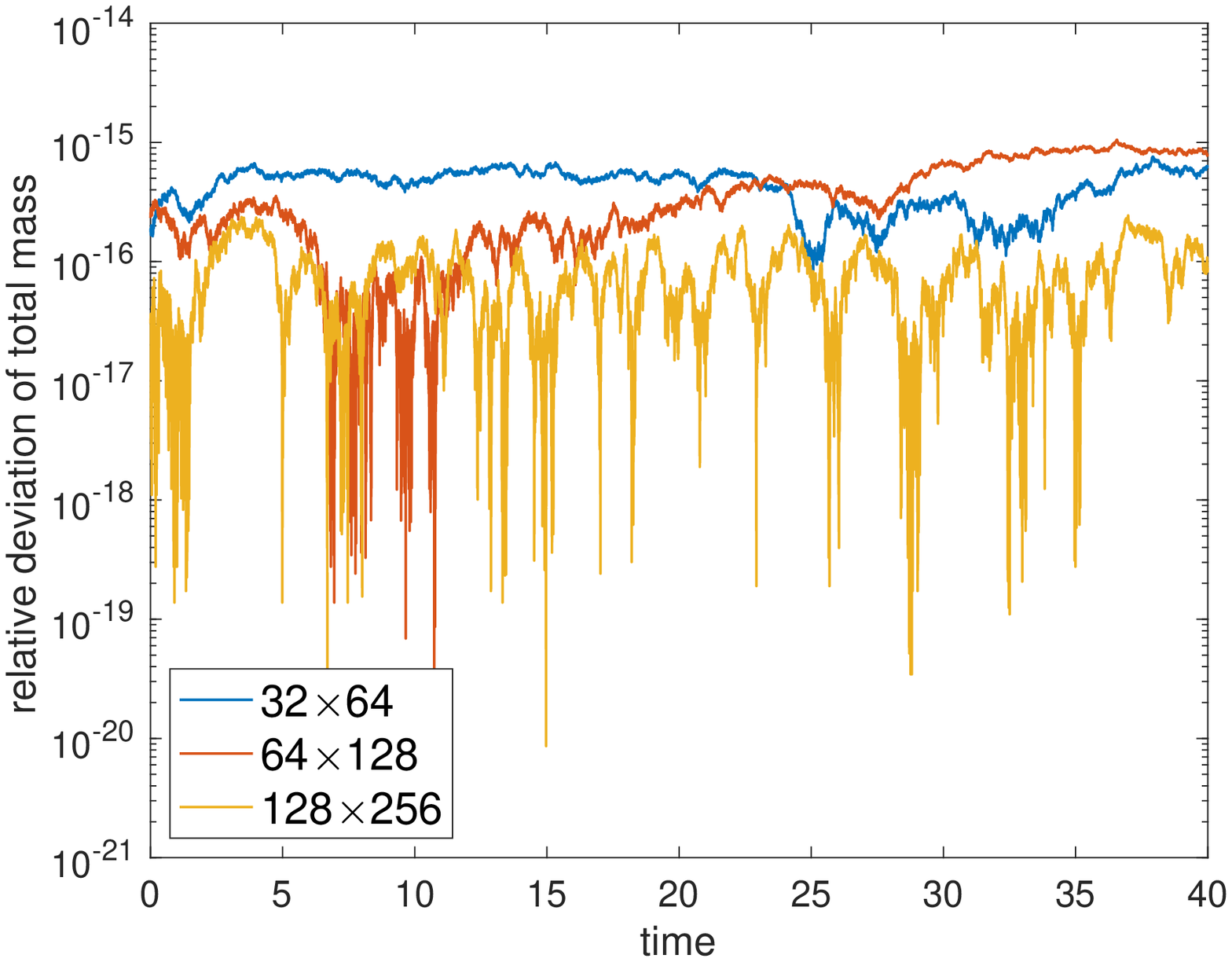}}
	\subfigure[$k=1$]{\includegraphics[height=40mm]{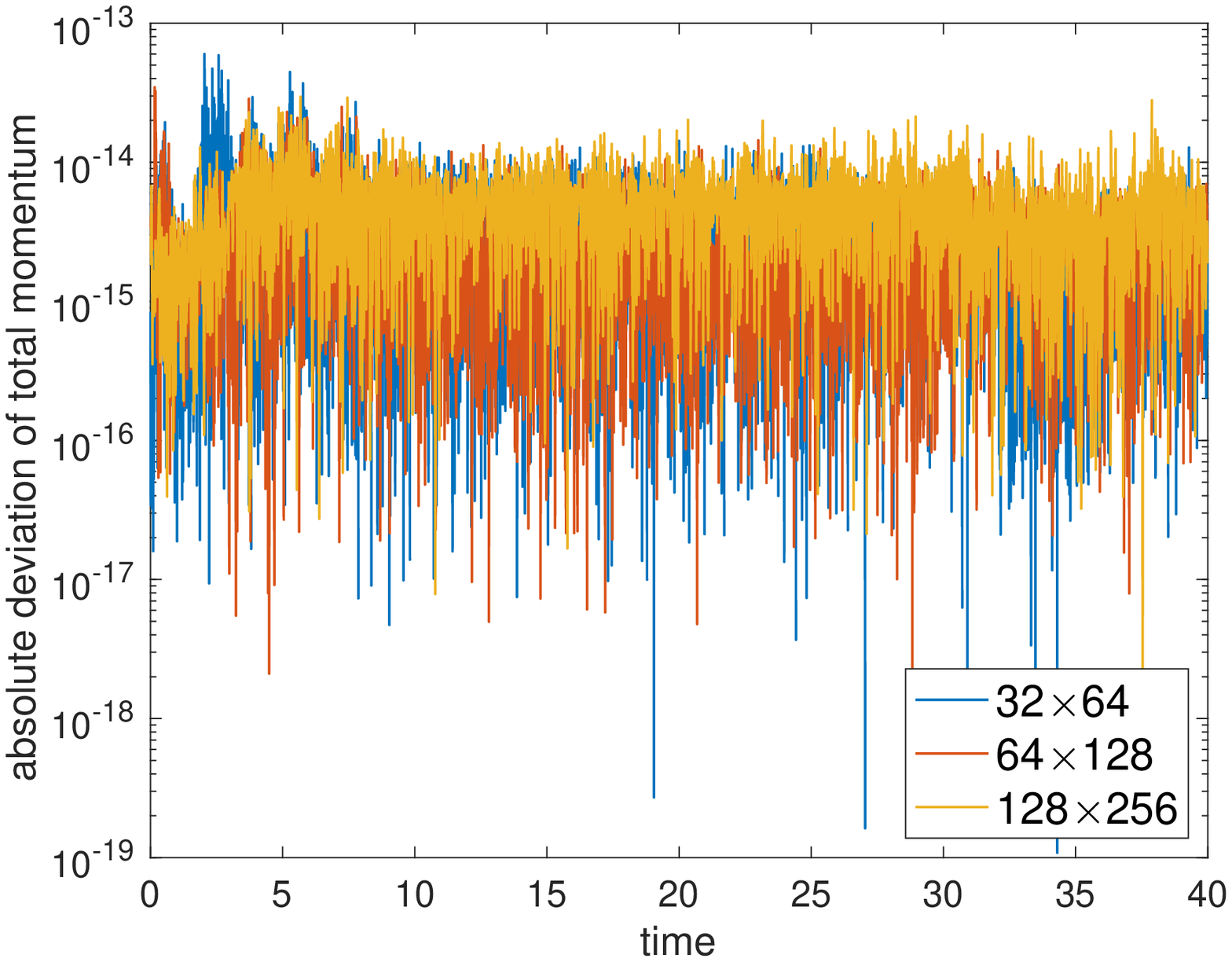}}
	\subfigure[$k=1$]{\includegraphics[height=40mm]{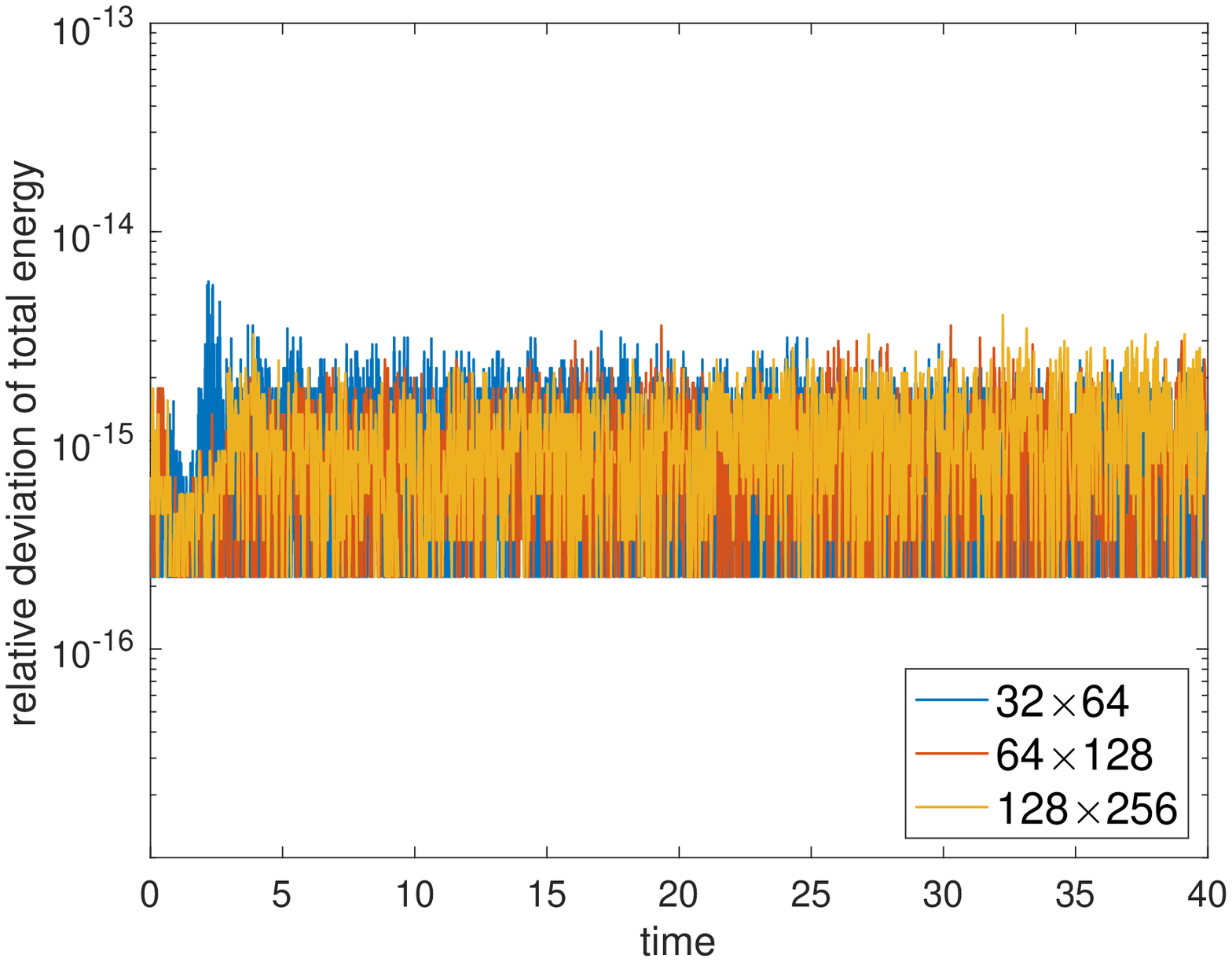}}
	\subfigure[$k=2$]{\includegraphics[height=40mm]{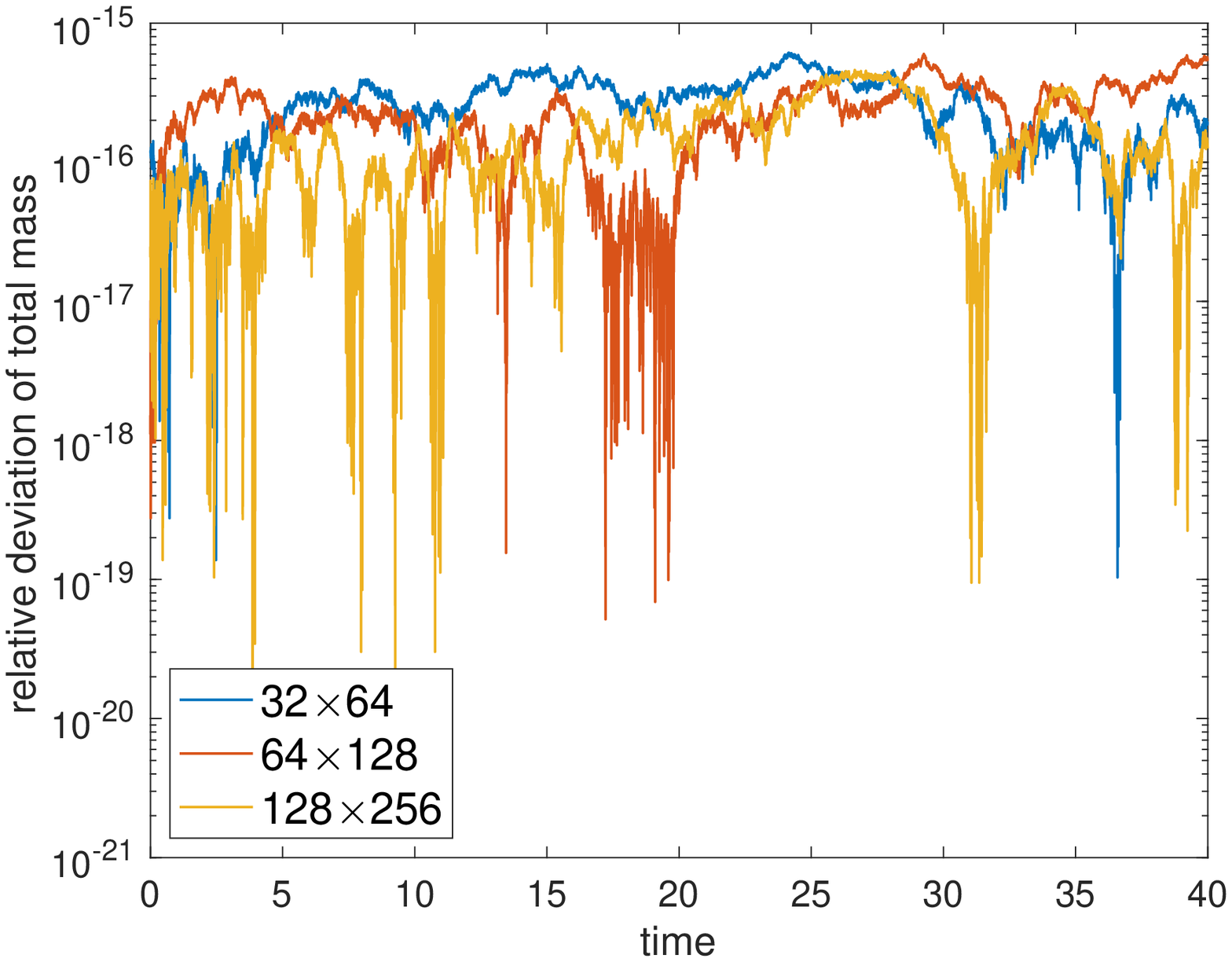}}
	\subfigure[$k=2$]{\includegraphics[height=40mm]{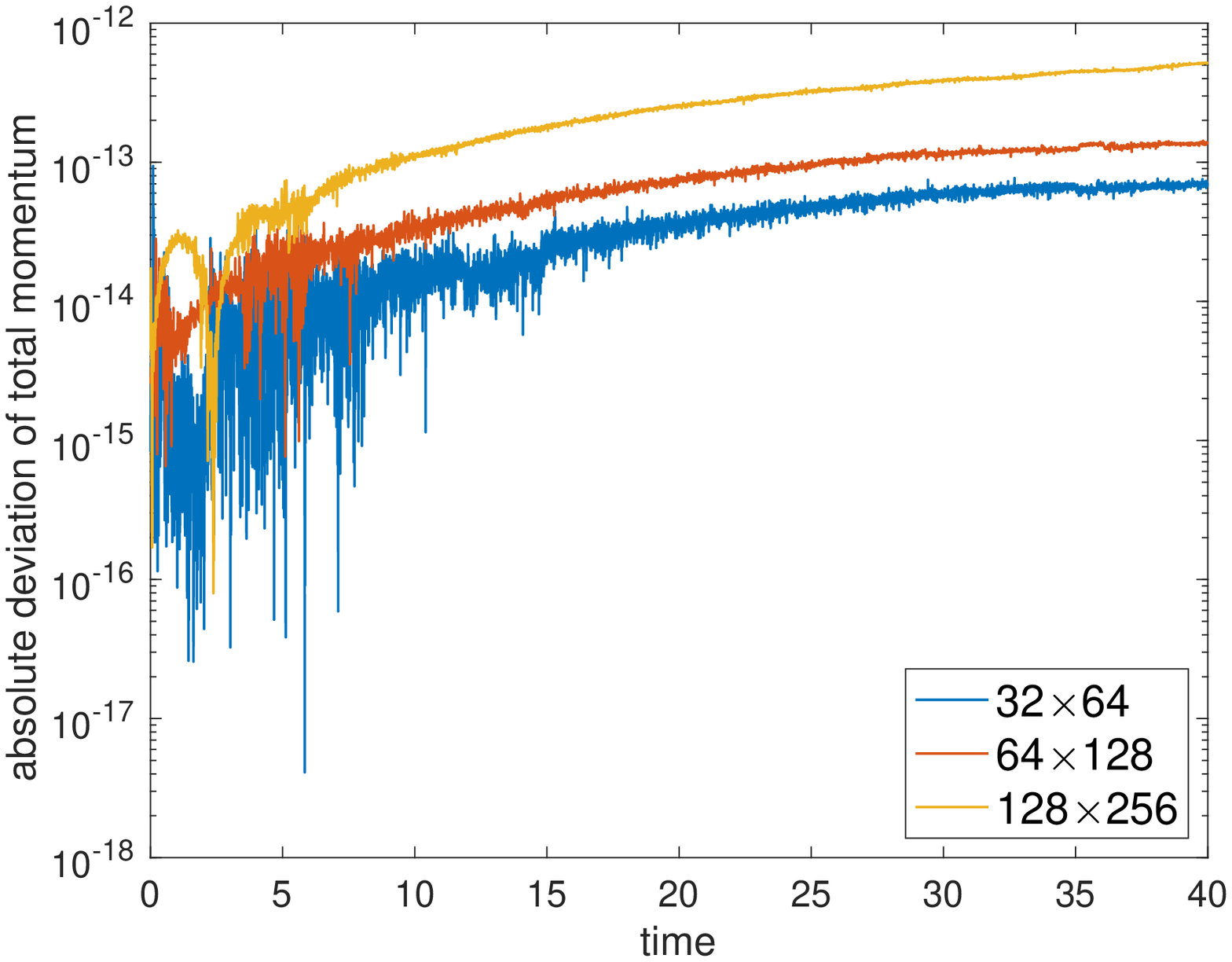}}
	\subfigure[$k=2$]{\includegraphics[height=40mm]{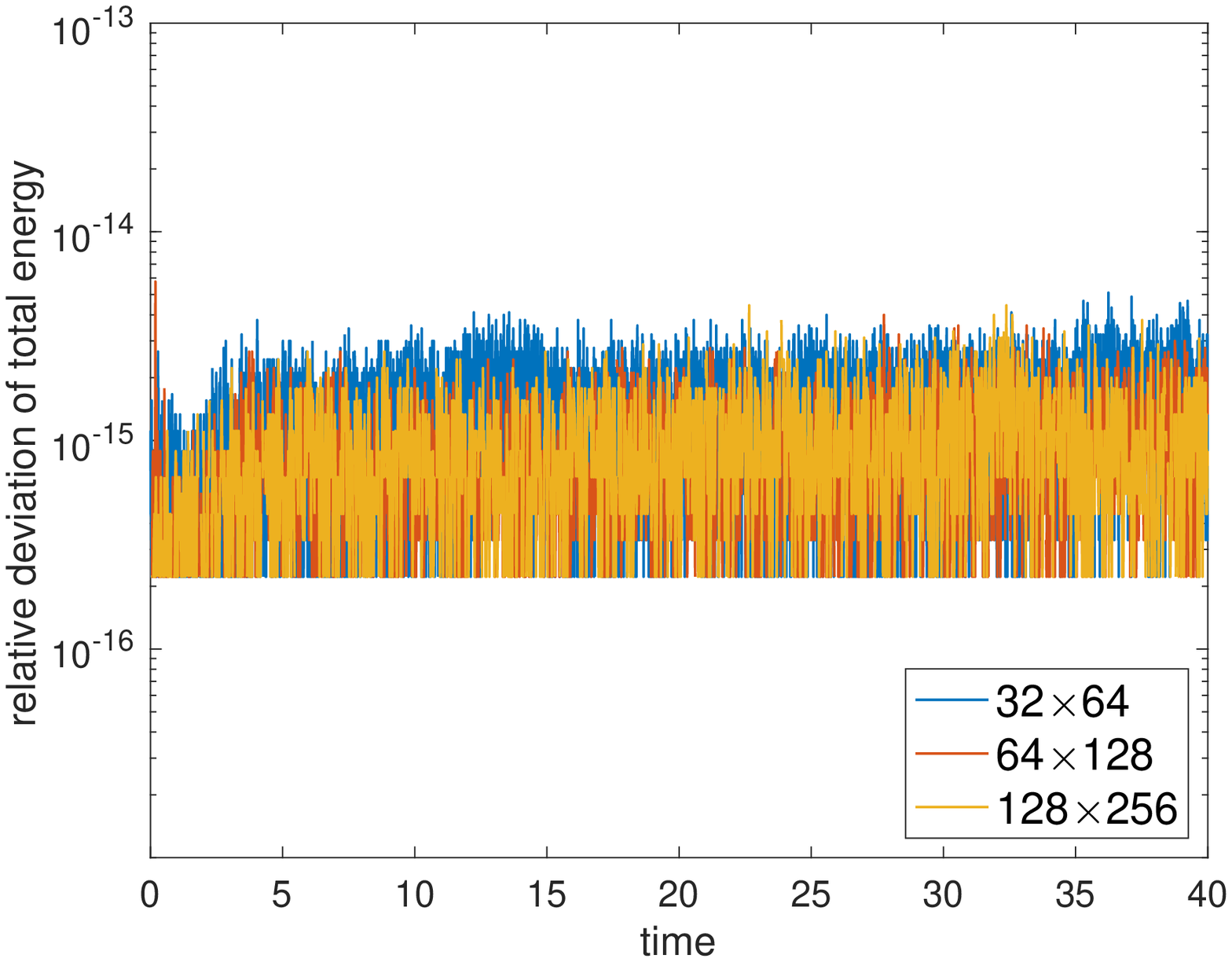}}
	\caption{Example \ref{ex:strong1d}.  The time evolution of  relative deviation of total mass (a, d), absolute deviation of total momentum (b, e), and relative deviation of total energy (c, f).  $\varepsilon=10^{-3}$.}
	\label{fig:strong1d_invar}
\end{figure}
	
\end{exa}

 \begin{exa}\label{ex:bumpontail} (Bump on tail.) As the last 1D1V test, we simulate the bump-on-tail problem with the initial condition
 \begin{equation}
\label{eq:bump1d}
f(x,v,t=0) = \left(1+\alpha  \cos \left(k x\right)\right)\left(n_p\exp\left(-\frac{v^2}{2}\right) +n_b\exp\left(-\frac{(v-u)^2}{2v_{t}}\right) \right),
\end{equation}
where $\alpha=0.04$, $k=0.3$, $n_{p}=\frac{9}{10 \sqrt{2 \pi}}$, $n_{b}=\frac{2}{10 \sqrt{2 \pi}}$, $u=4.5$, $v_{t}=0.5$. The weight function $w(v) = \exp(-\frac{v^2}{7})$ is chosen. The domain is set to be $[0,L_x]\times[-L_v,L_v]$ with $L_x=2\pi/k$ and $L_v=13$, and the truncation threshold is chosen as $\varepsilon=10^{-5}$.  We simulate the problem up to $t=30$ and plot the contours of the low rank DG solutions with a set of non-uniform meshes obtained by perturbing uniform meshes by 10\%. The results are consistent with those reported in the literature, and a method with larger $k$ and over a finer mesh can provide better resolution as expected. In Figure \ref{fig:bump1d_elec}-\ref{fig:bump1d_invar}, we report the time histories of electric energy, numerical ranks, together with relative derivation of total mass, total momentum and total energy. The observation is similar to the strong Landau damping test that the filamentation structures are well captured by the proposed method with rank adaptivity, and the physical invariants are conserved up to the machine precision.


 \begin{figure}[h!]
	\centering
       \subfigure[$k=1$, $N_x\times N_v=16\times32$]{\includegraphics[height=40mm]{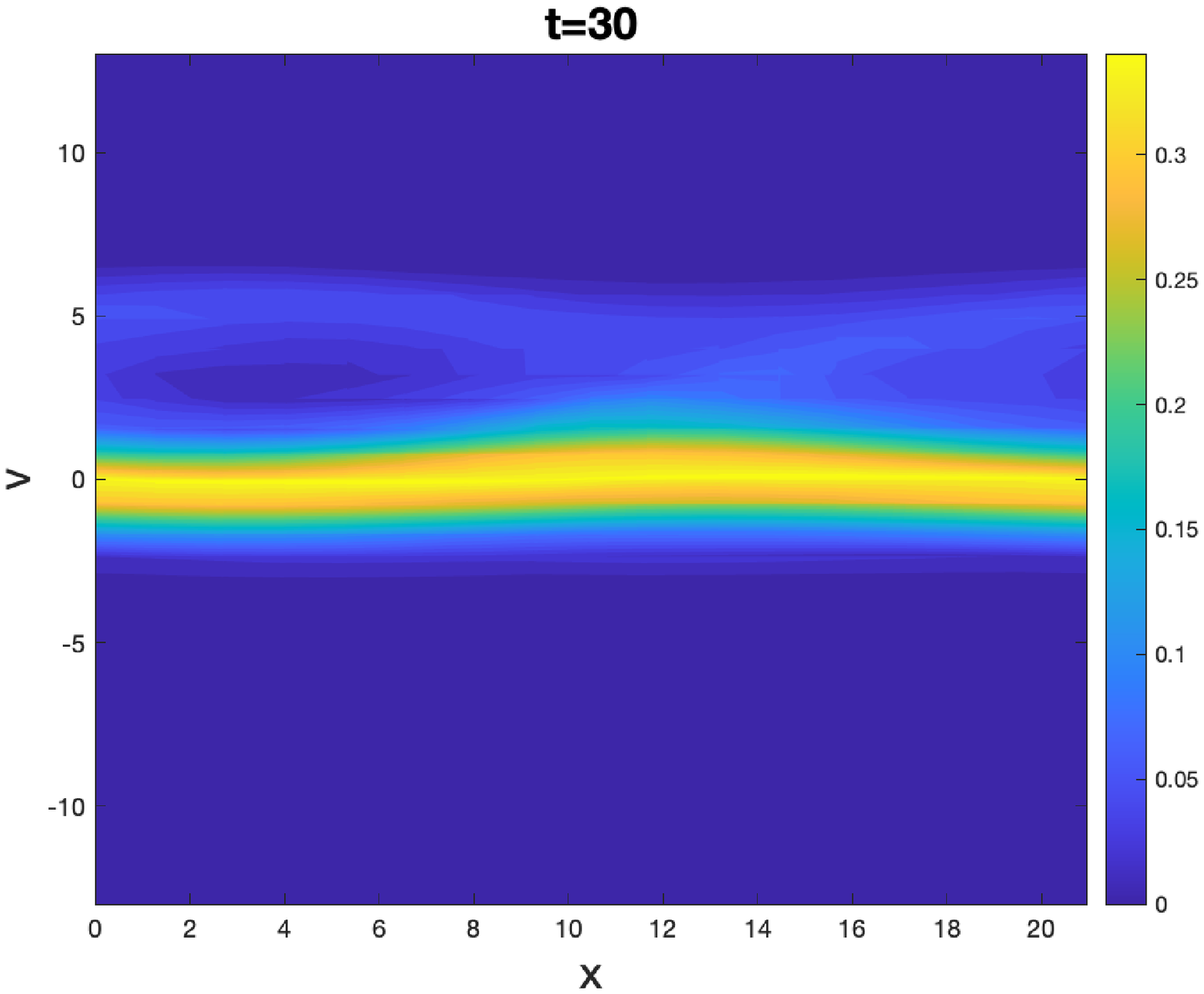}}
	\subfigure[$k=1$, $N_x\times N_v=32\times64$]{\includegraphics[height=40mm]{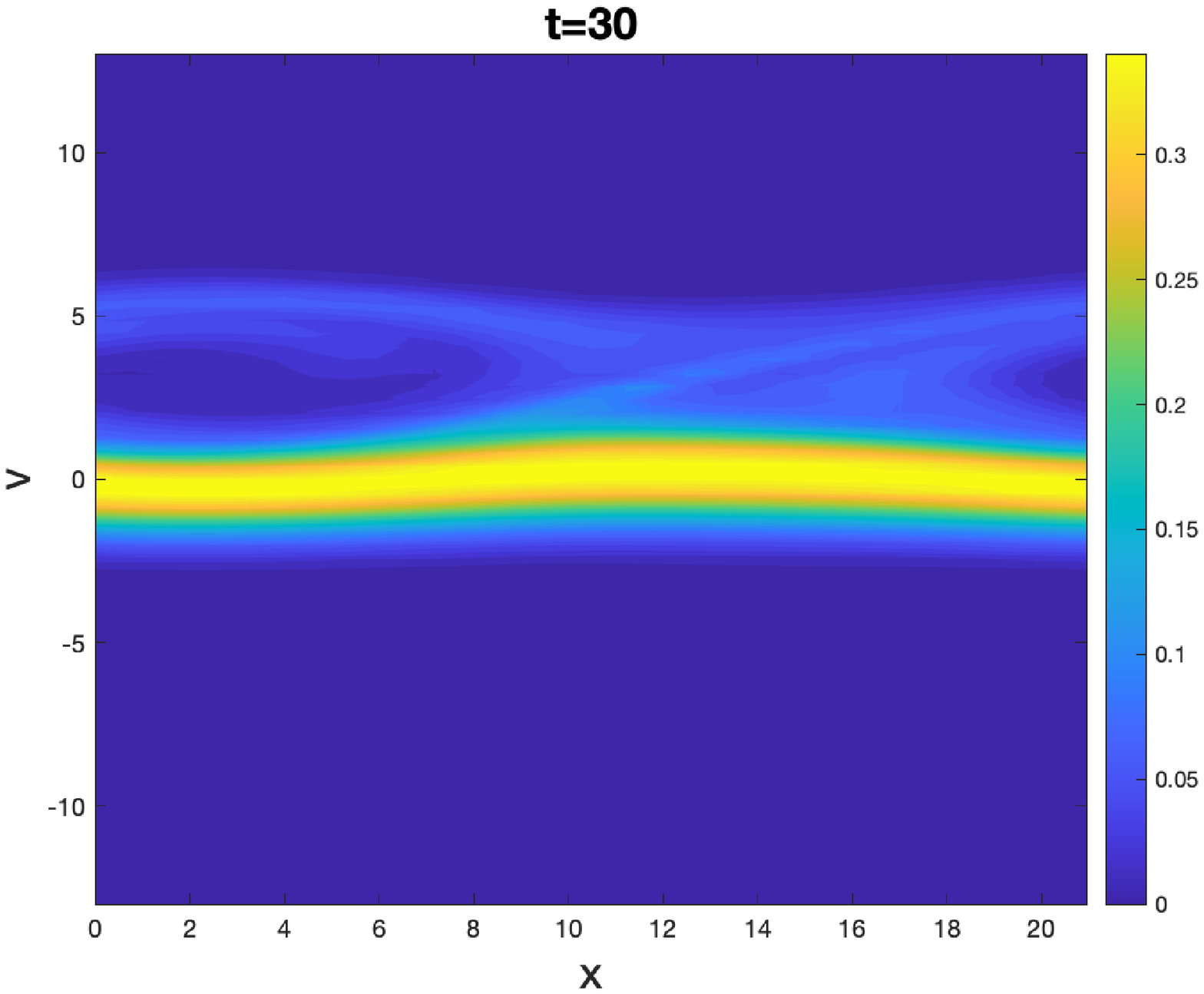}}
	\subfigure[$k=1$, $N_x\times N_v=64\times128$]{\includegraphics[height=40mm]{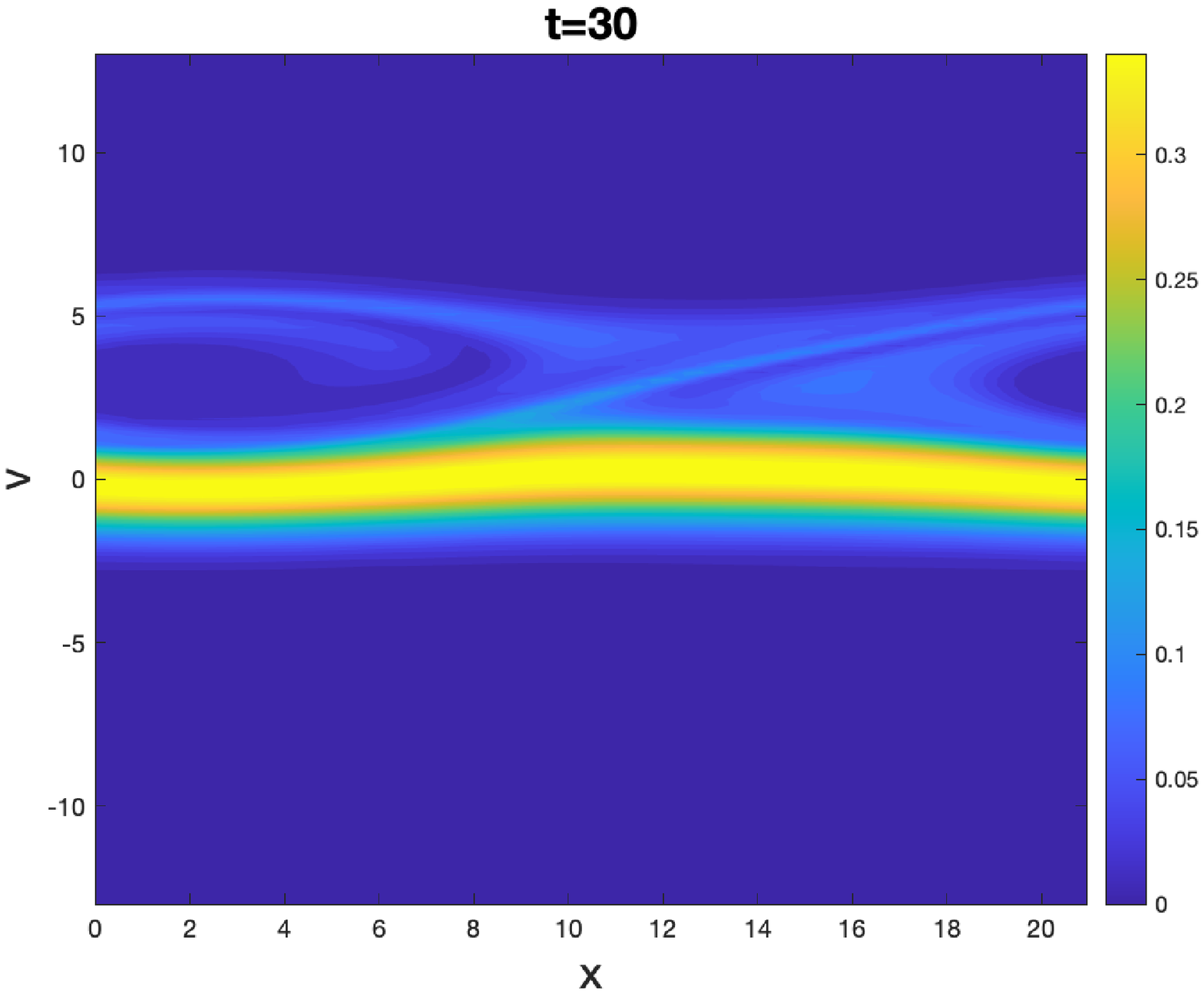}}
       \subfigure[$k=2$, $N_x\times N_v=16\times32$]{\includegraphics[height=40mm]{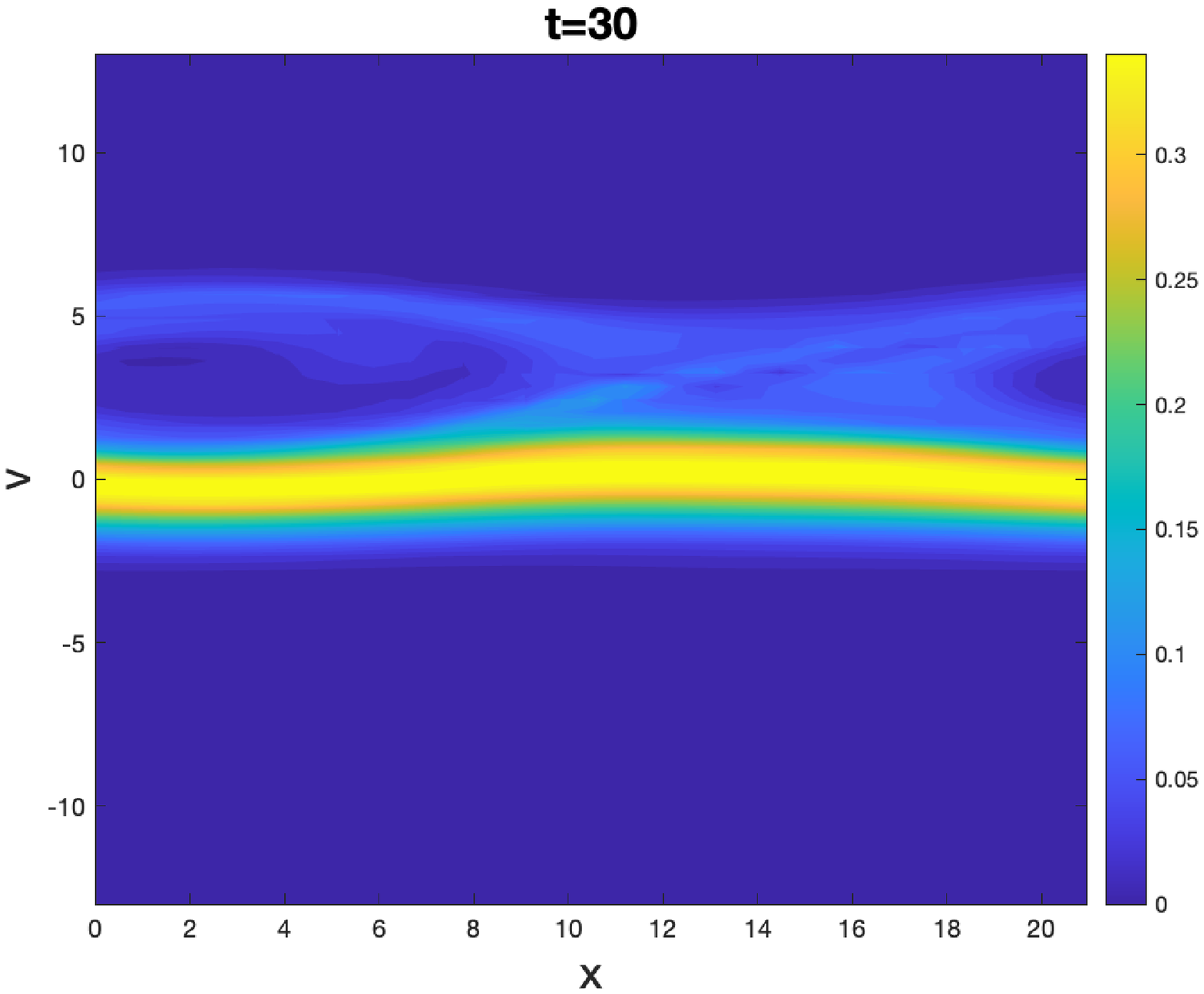}}
	\subfigure[$k=2$, $N_x\times N_v=32\times64$]{\includegraphics[height=40mm]{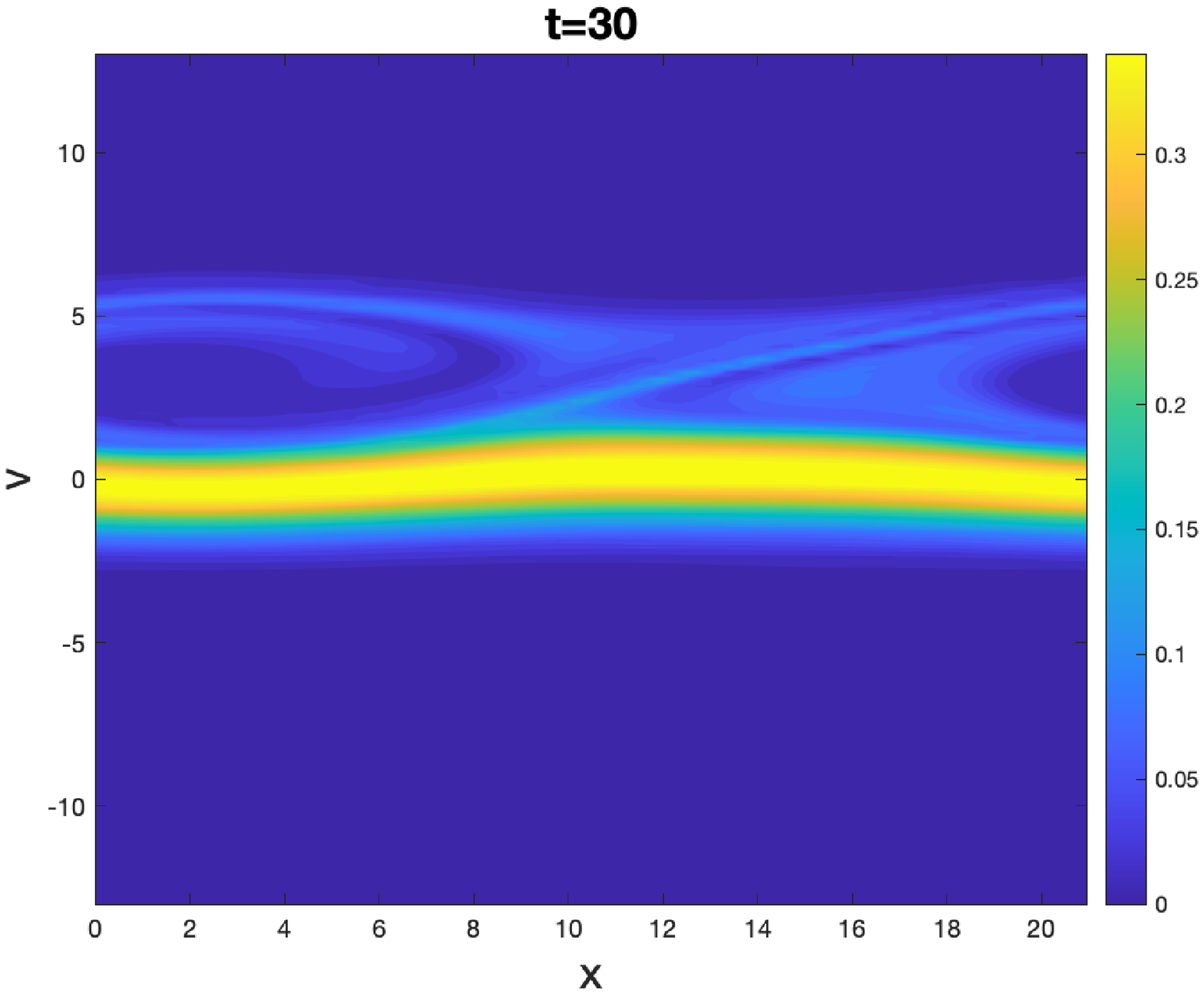}}
	\subfigure[$k=2$, $N_x\times N_v=64\times128$]{\includegraphics[height=40mm]{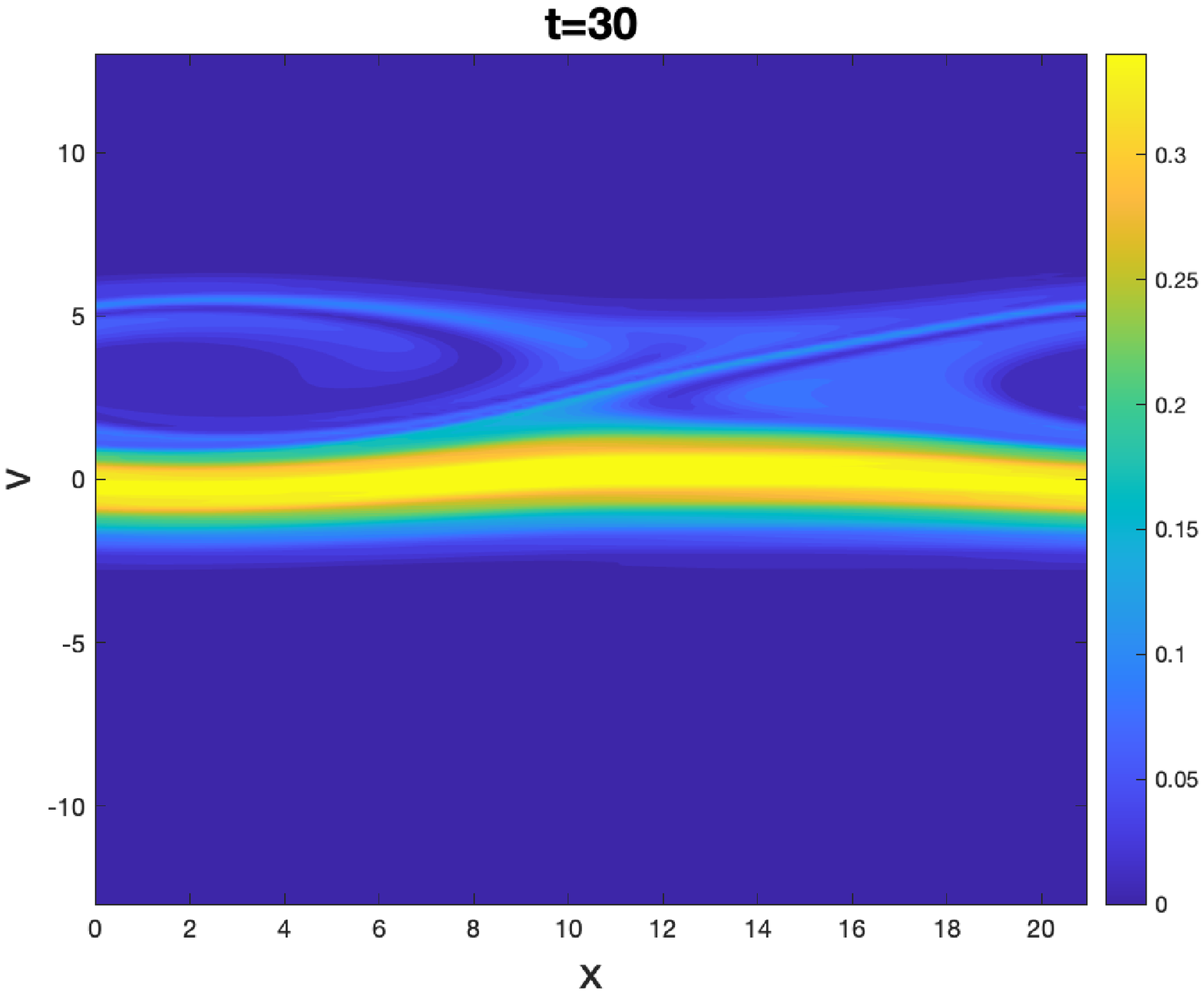}}
			\caption{Example \ref{ex:bumpontail}. Contour plots of the low rank DG solutions at $t=30$. $\varepsilon=10^{-5}$.}
	\label{fig:bumpontail_contour}
\end{figure}

\begin{figure}[h!]
	\centering
	\subfigure[$k=1$]{\includegraphics[height=60mm]{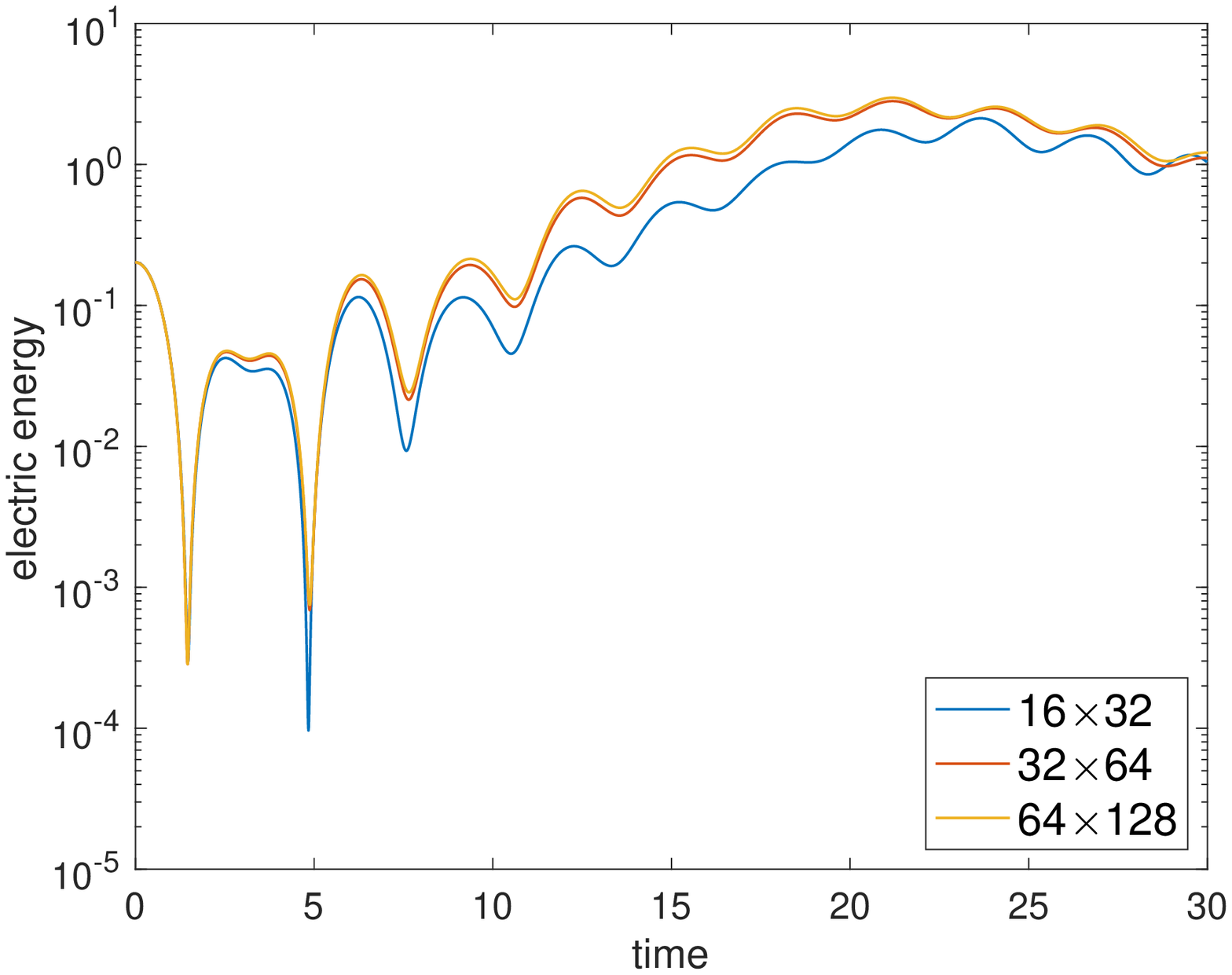}}
	\subfigure[$k=2$]{\includegraphics[height=60mm]{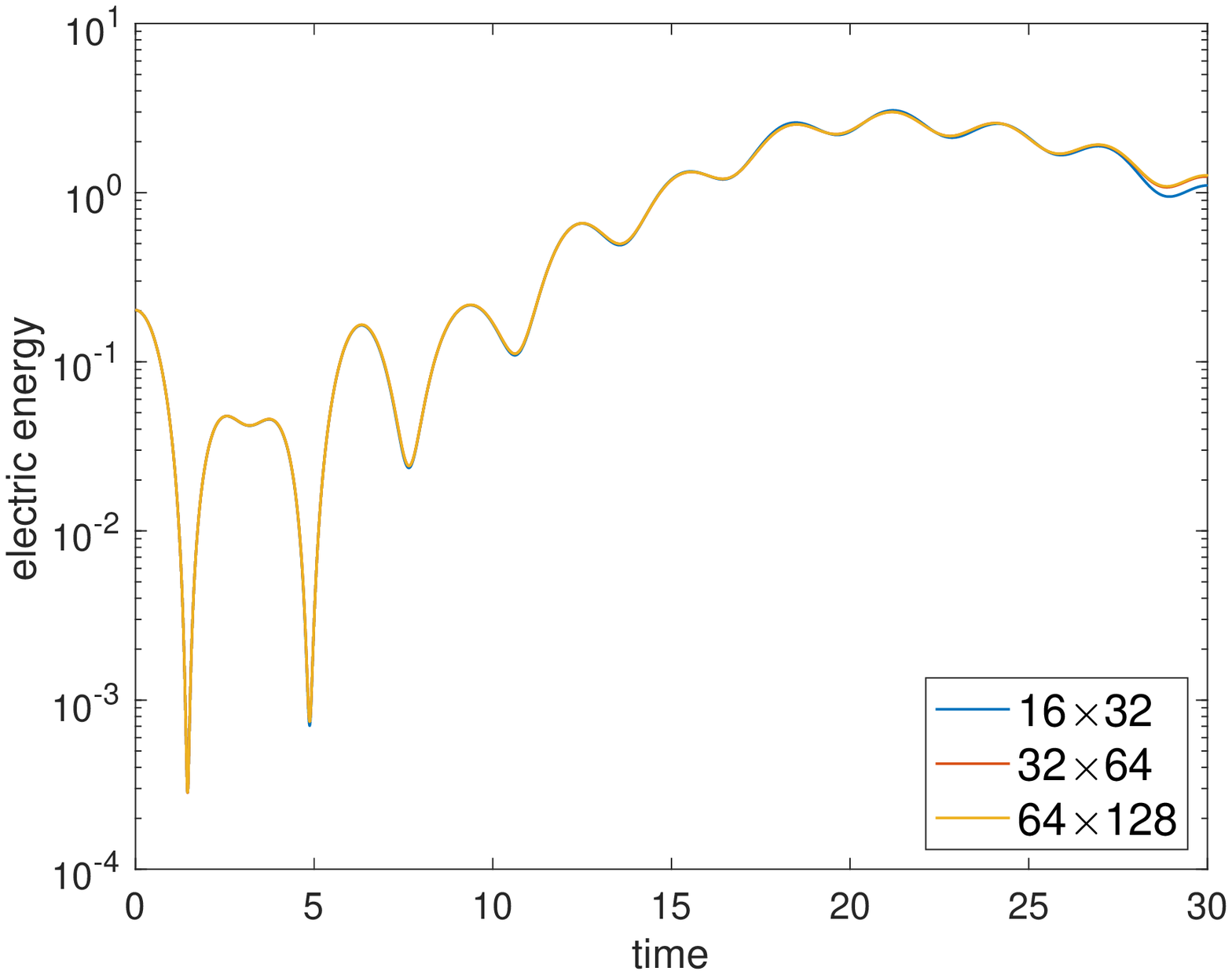}}
	\subfigure[$k=1$]{\includegraphics[height=60mm]{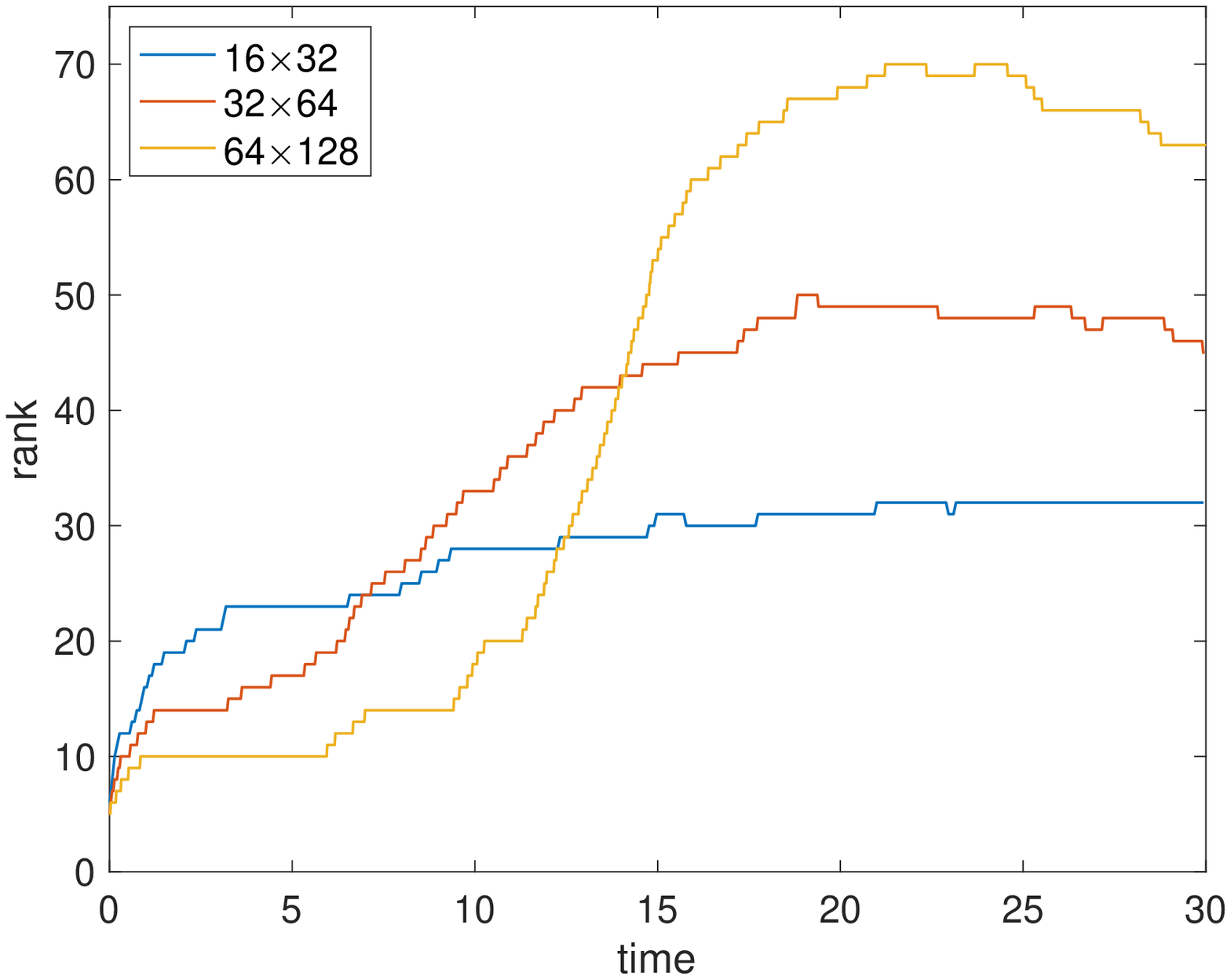}}
	\subfigure[$k=2$]{\includegraphics[height=60mm]{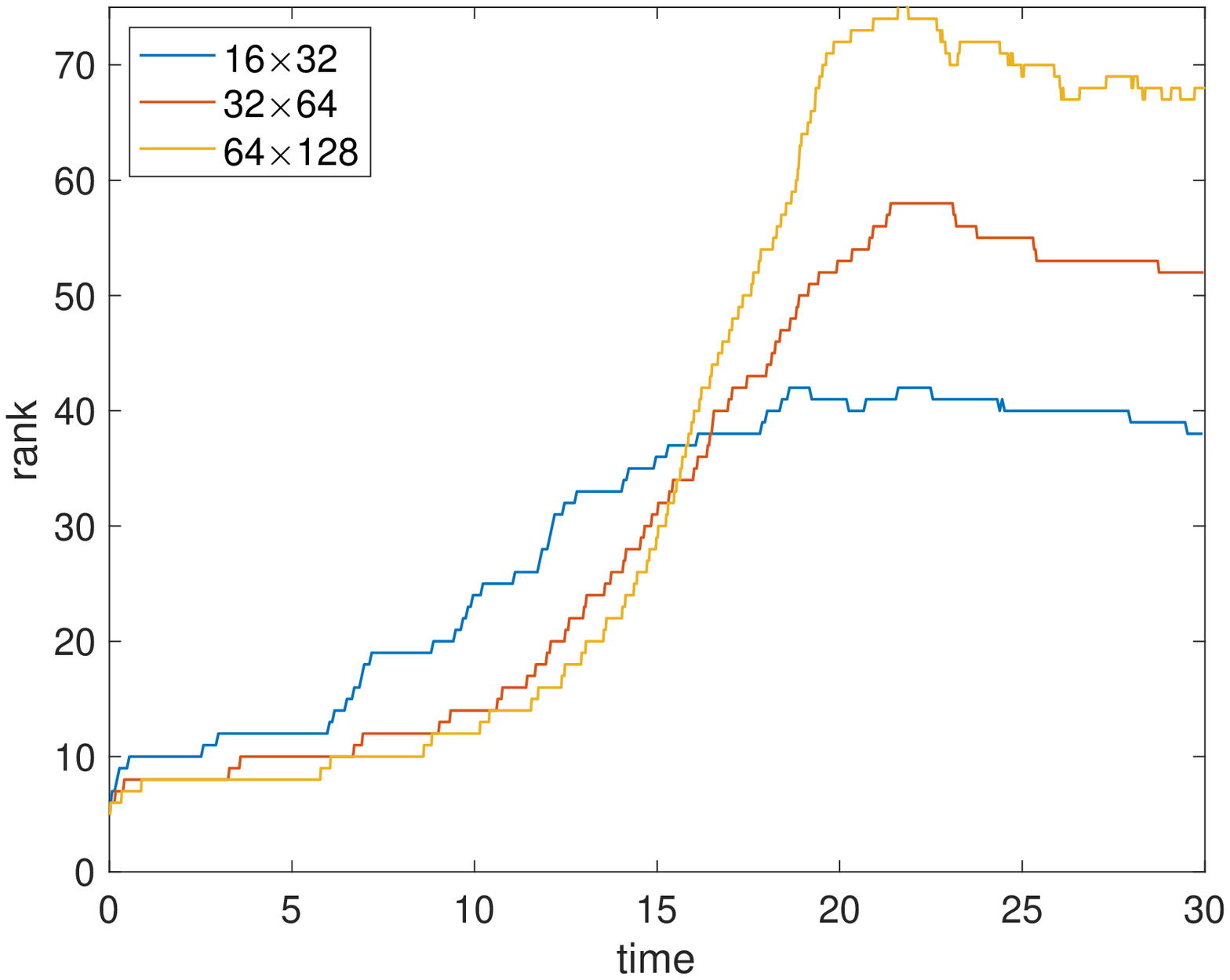}}
	\caption{Example \ref{ex:bumpontail}.  The time evolution of  electric energy (a, b) and ranks of the low rank DG solutions (c, b). $\varepsilon=10^{-5}$.}
	\label{fig:bump1d_elec}
\end{figure}	

\begin{figure}[h!]
	\centering
	\subfigure[$k=1$]{\includegraphics[height=40mm]{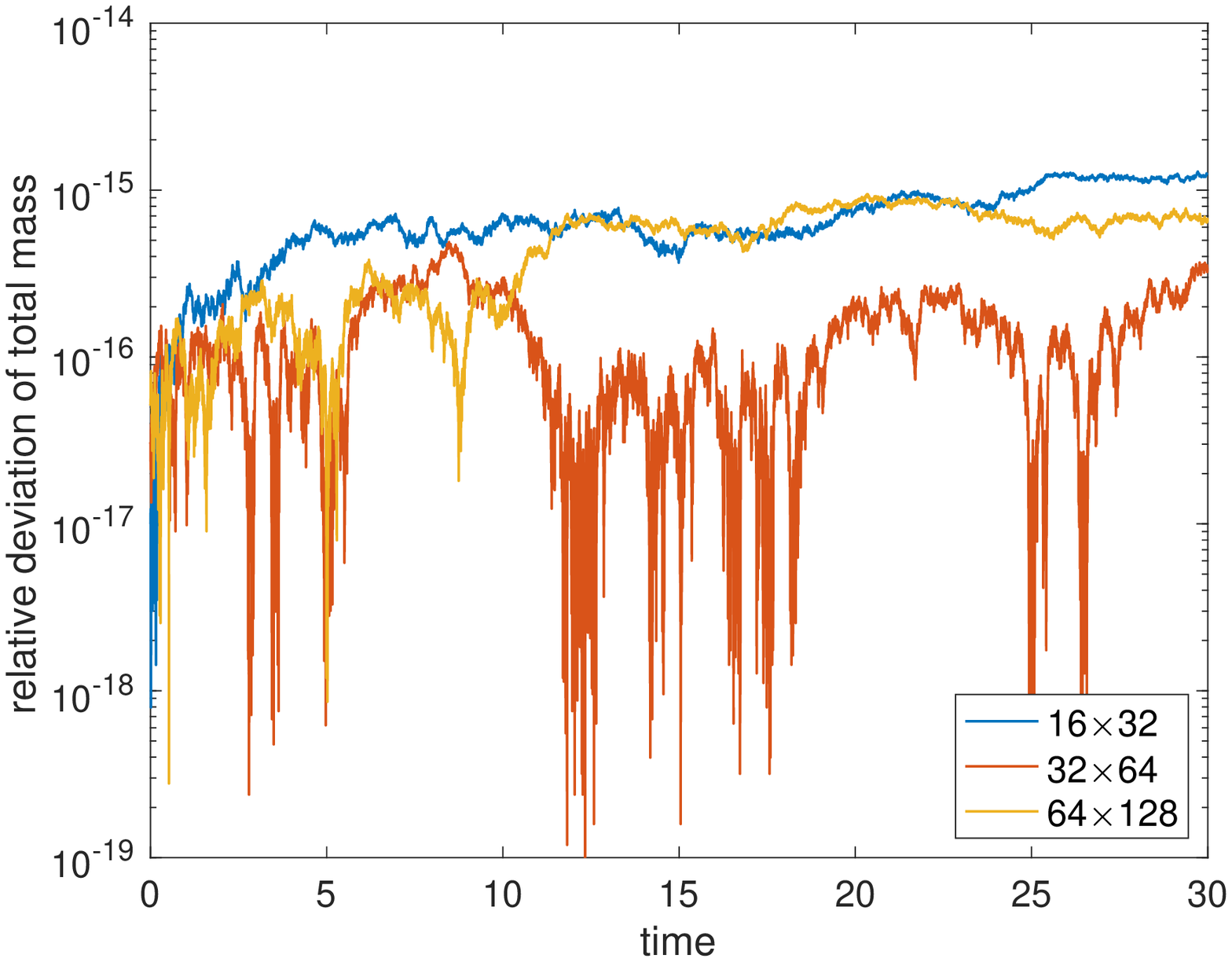}}
	\subfigure[$k=1$]{\includegraphics[height=40mm]{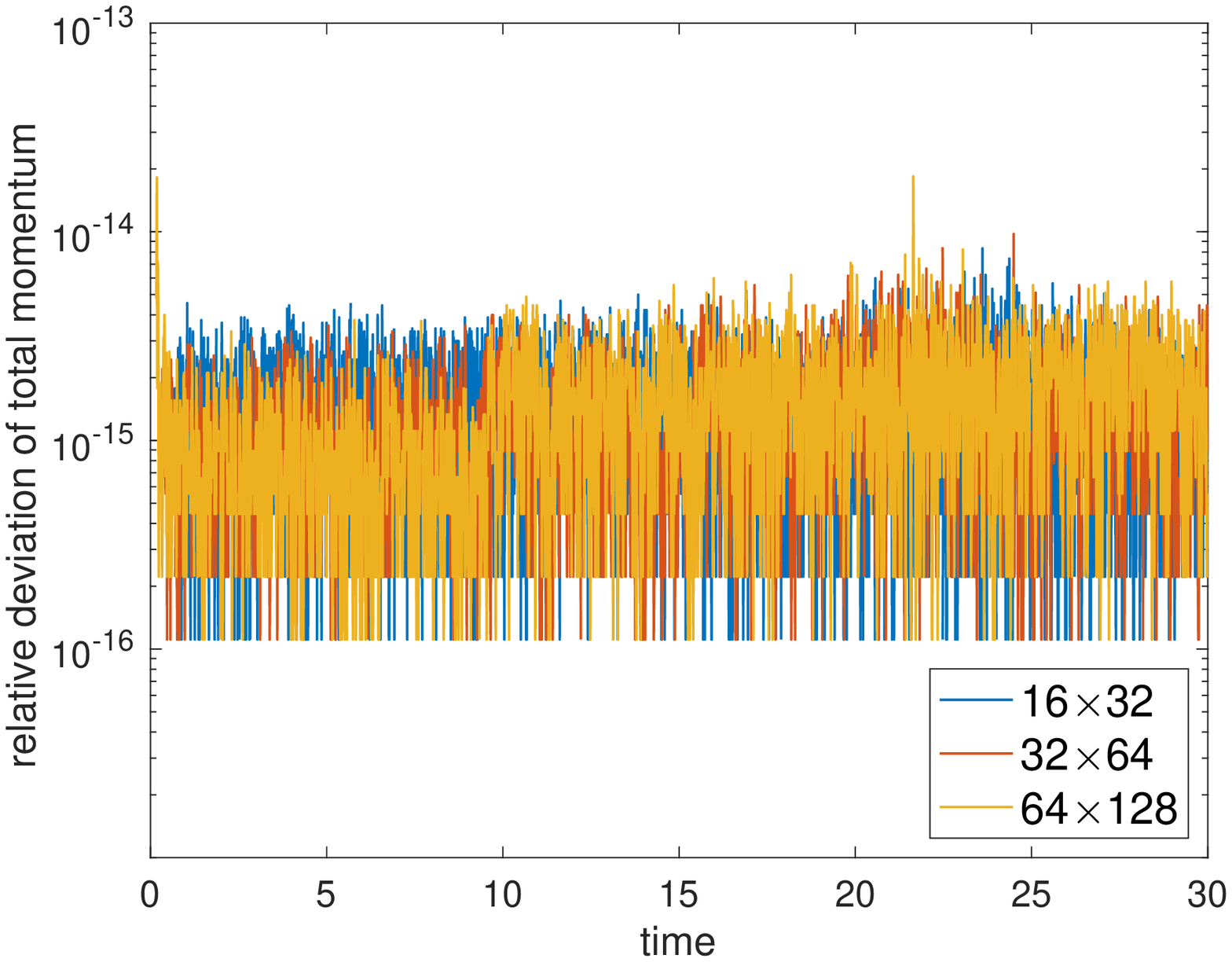}}
	\subfigure[$k=1$]{\includegraphics[height=40mm]{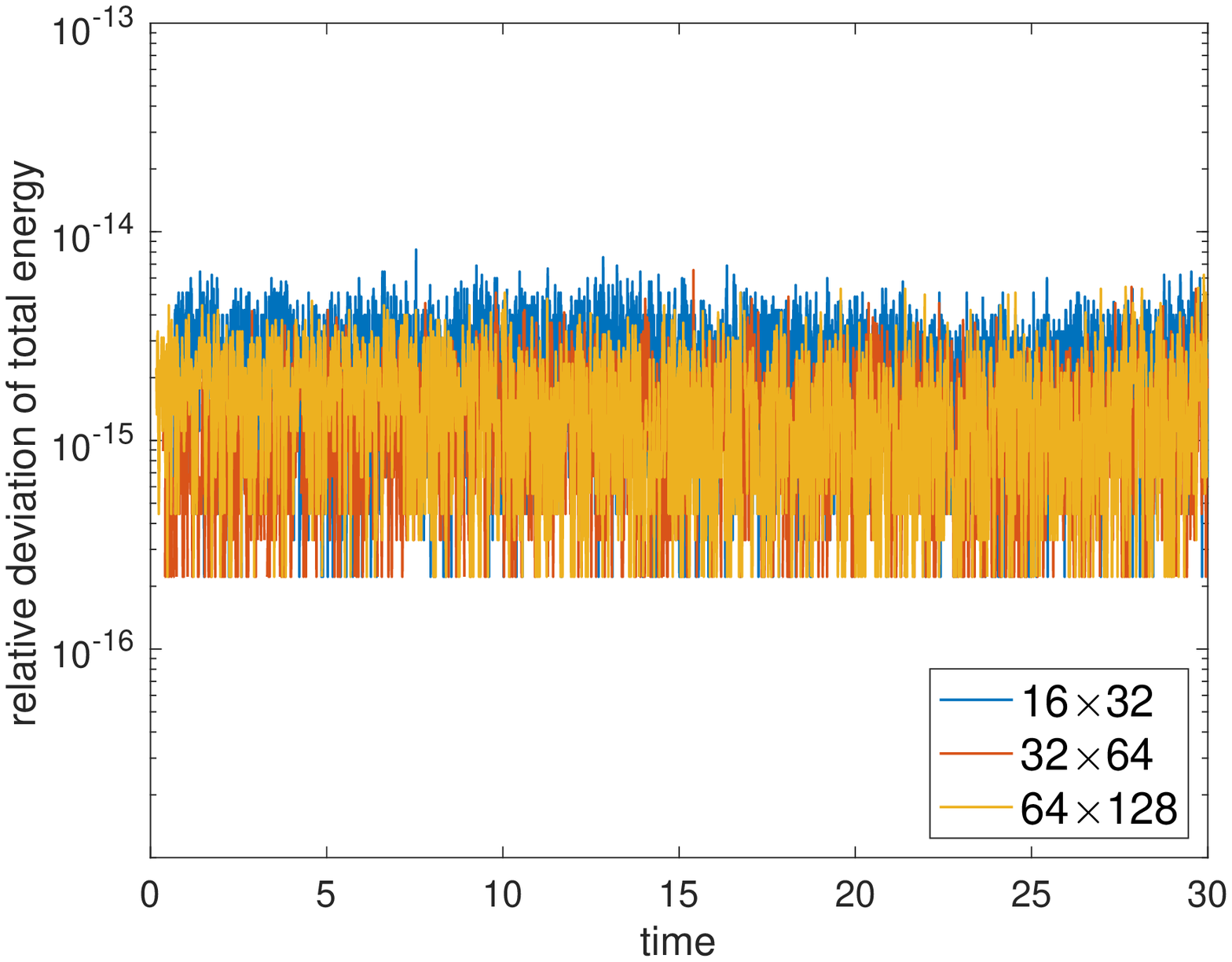}}
	\subfigure[$k=2$]{\includegraphics[height=40mm]{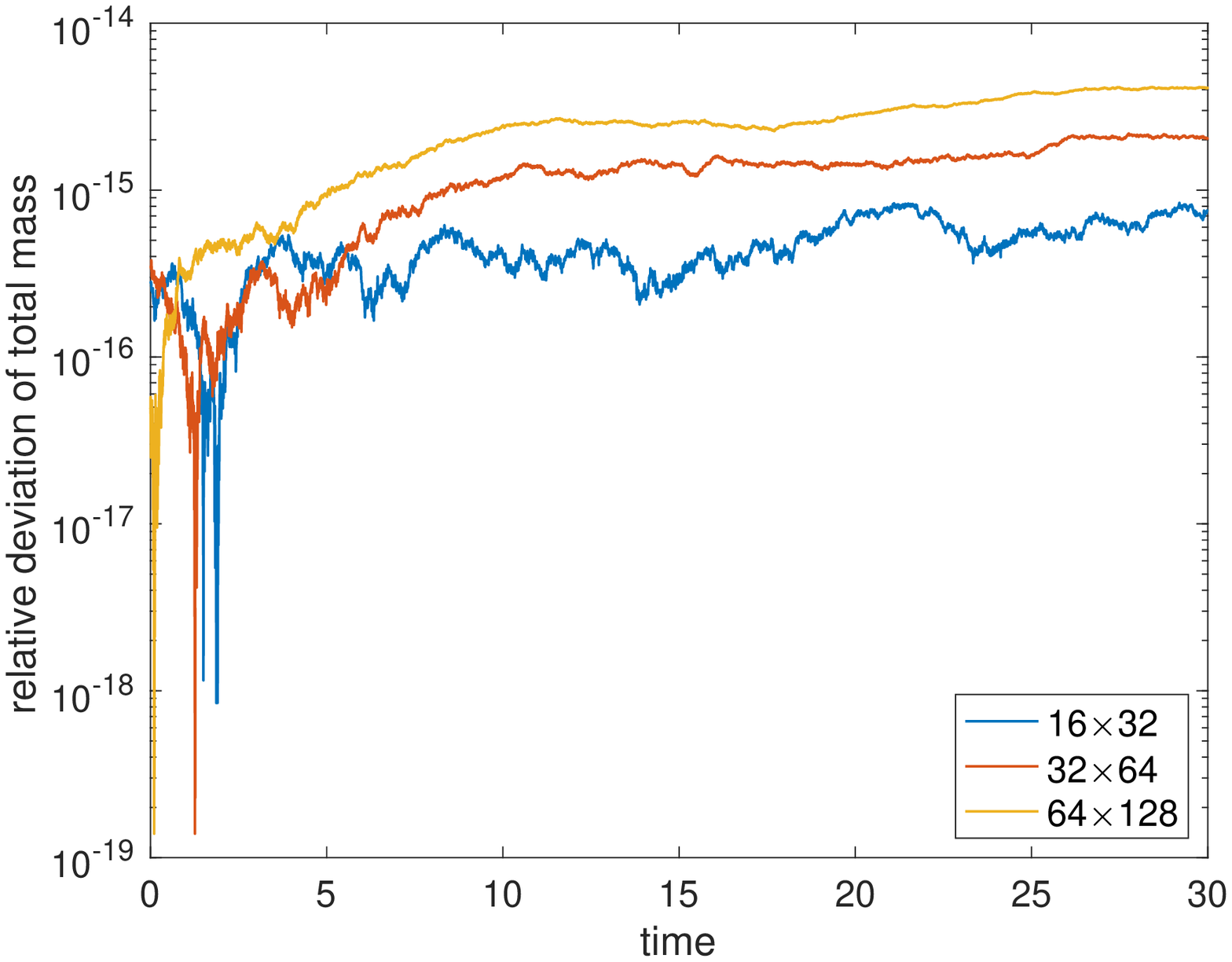}}
	\subfigure[$k=2$]{\includegraphics[height=40mm]{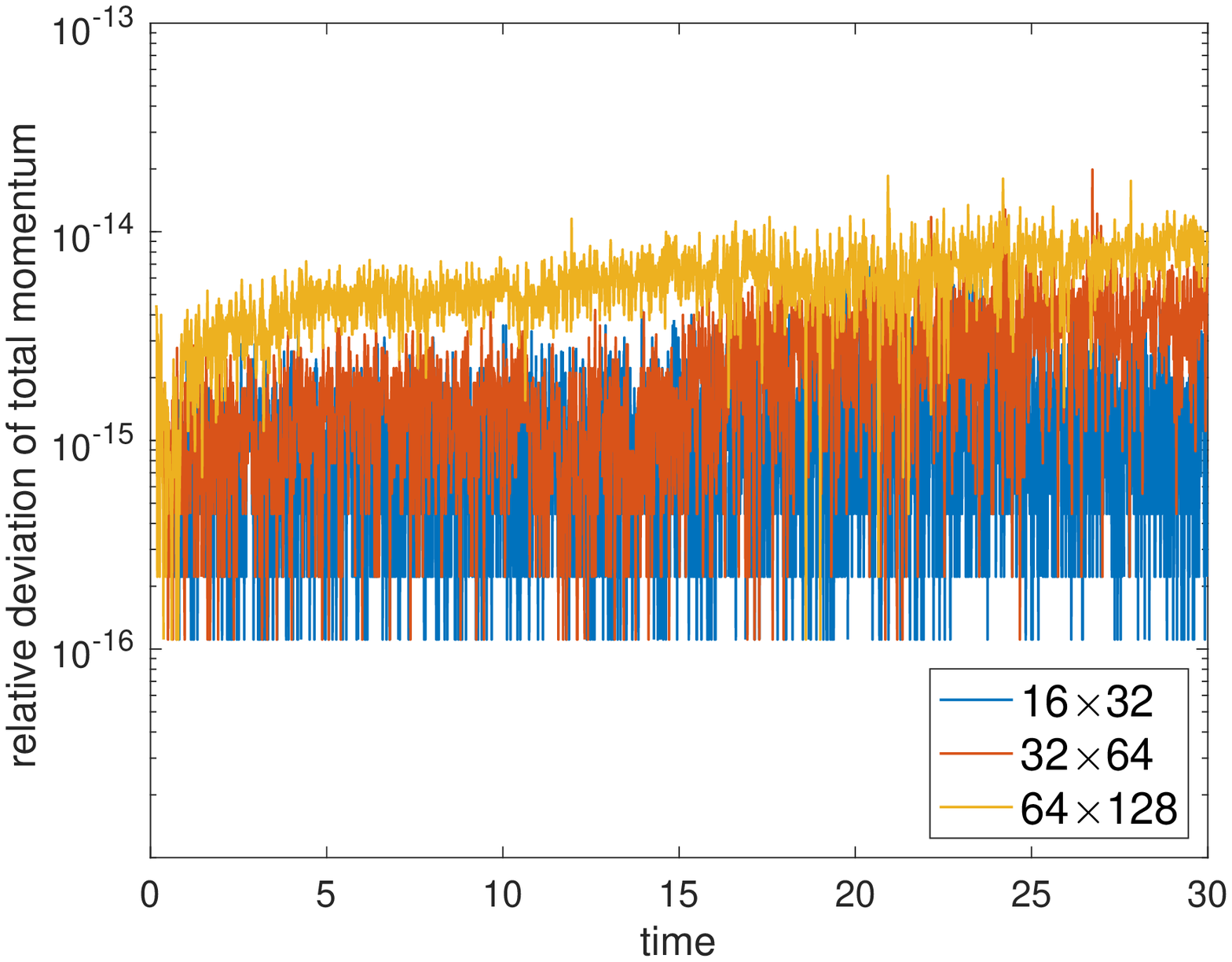}}
	\subfigure[$k=2$]{\includegraphics[height=40mm]{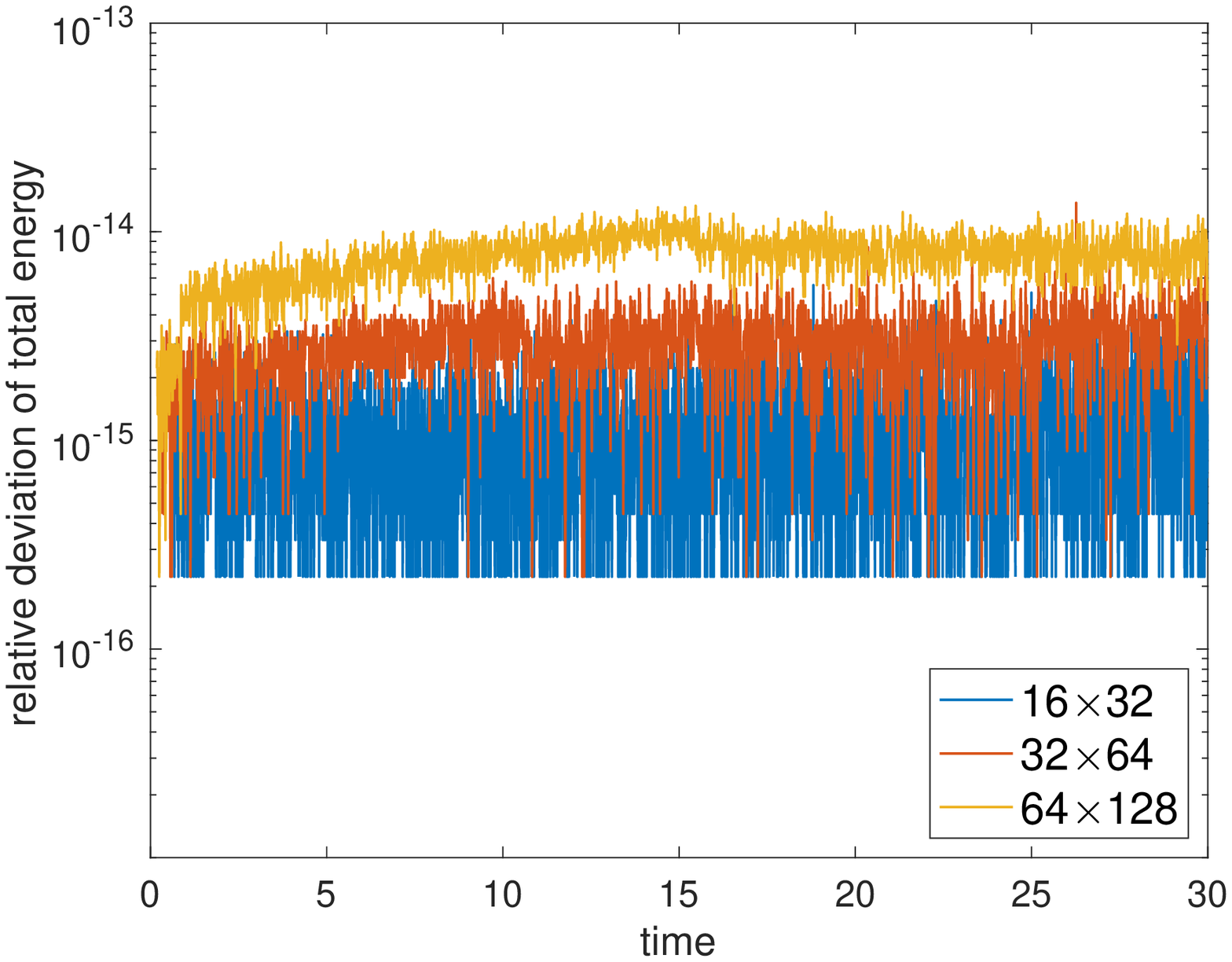}}
	\caption{Example \ref{ex:bumpontail}.  The time evolution of  relative deviation of total mass (a, d),  total momentum (b, e), and rtotal energy (c, f).  $\varepsilon=10^{-5}$.}
	\label{fig:bump1d_invar}
\end{figure}

\end{exa}

\subsection{2D2V Vlasov-Poisson system}

\begin{exa} \label{ex:weak2d} (Weak Landau damping.) We consider the 2D2V version of weak Landau damping. The initial  
condition is 
\begin{equation}
	\label{eq:weak}
	f(\bx,\bv,t=0) =\frac{1}{(2 \pi)^{d / 2}} \left(1+\alpha \sum_{m=1}^{d} \cos \left(k x_{m}\right)\right)\exp\left(-\frac{|\bv|^2}{2}\right),
\end{equation}
where $d=2$, $\alpha=0.01$, and $k=0.5$. We set the computation domain as $[0,L_x]^2\times[-L_v,L_v]^2$, where $L_x=\frac{2\pi}{k}$ and $L_v=6$, and the truncation threshold $\varepsilon=10^{-4}$. As with the 1D1V case, the electric energy will decay exponentially fast over time. To mitigate the curse of the dimensionality, we represent the four dimensional solution in the third order HT tensor format, without further decomposition in the spatial directions.   
 In Figure \ref{fig:weak2d_elec_con_k1}-\ref{fig:weak2d_elec_con_k2}, we report the time evolution of the electric energy, hierarchical ranks of the numerical solution,  relative deviation of total mass and energy together with absolute deviation of total momentum $J_1$ and $J_2$. It is known that the solution processes low rank structures on phase space, and hence we expect the proposed low rank DG method can efficiently avoid the curse of dimensionality. The CPU cost for the simulation with meshes $16^2\times32^2$, $32^2\times64^2$, $64^2\times128^2$ is for 550s, 1092s, 3047s for $k=1$ and 556s, 1143s, and 4435s for  $k=2$ with serial implementation. Furthermore, the LoMaC low rank DG method can conserve the physical invariants up to machine precision.

\begin{figure}[h!]
	\centering
	\subfigure[]{\includegraphics[height=40mm]{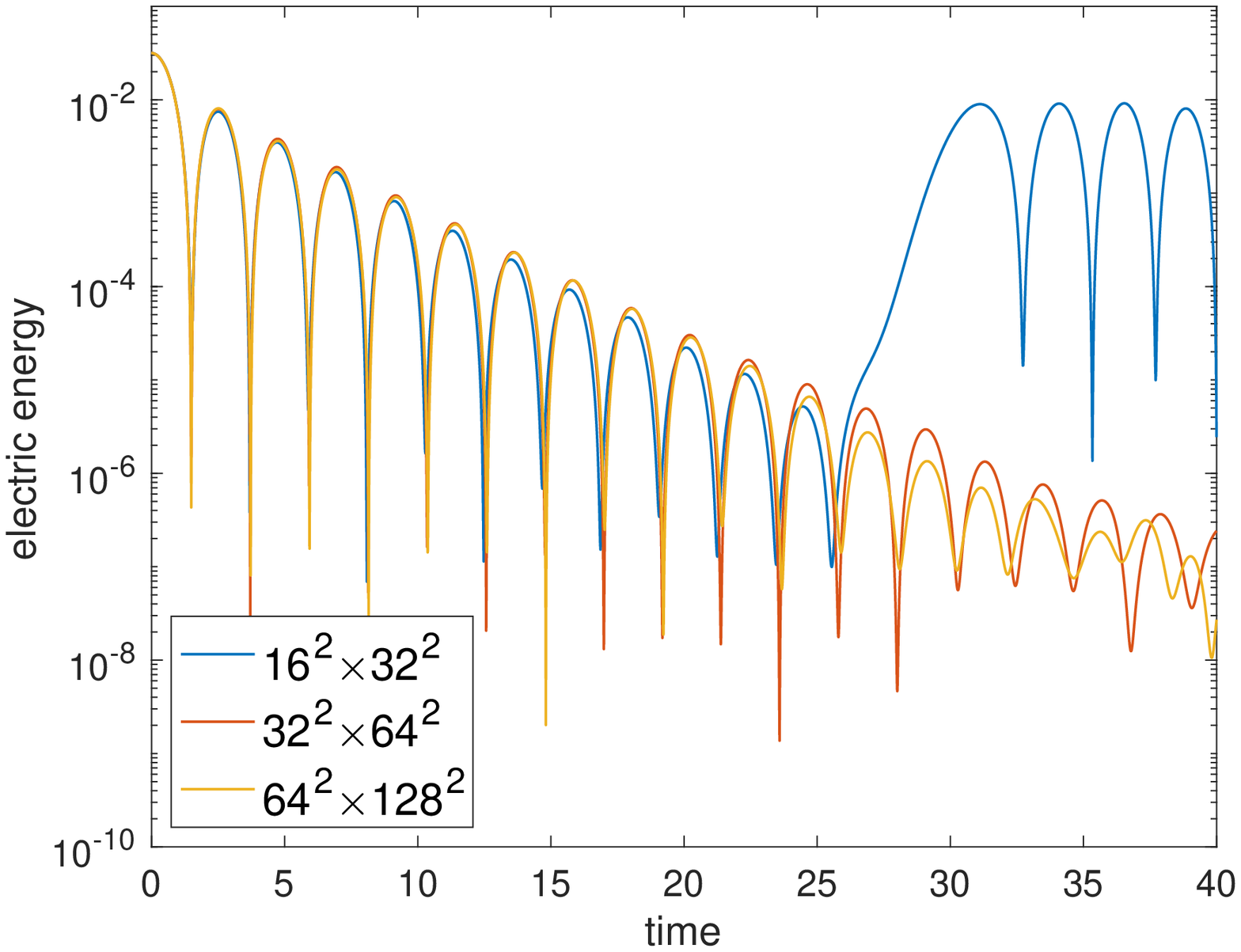}}
		\subfigure[]{\includegraphics[height=40mm]{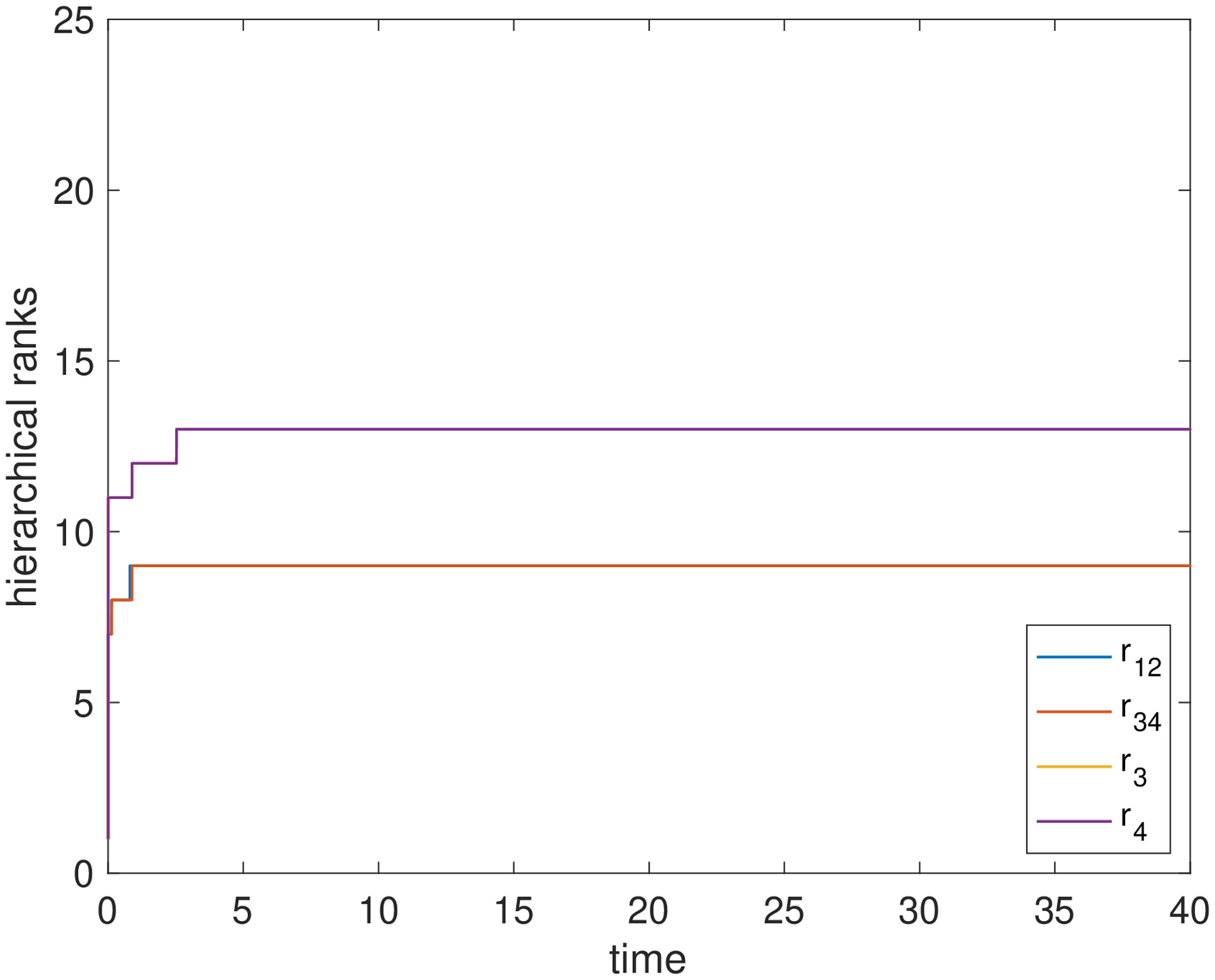}}
		\subfigure[]{\includegraphics[height=40mm]{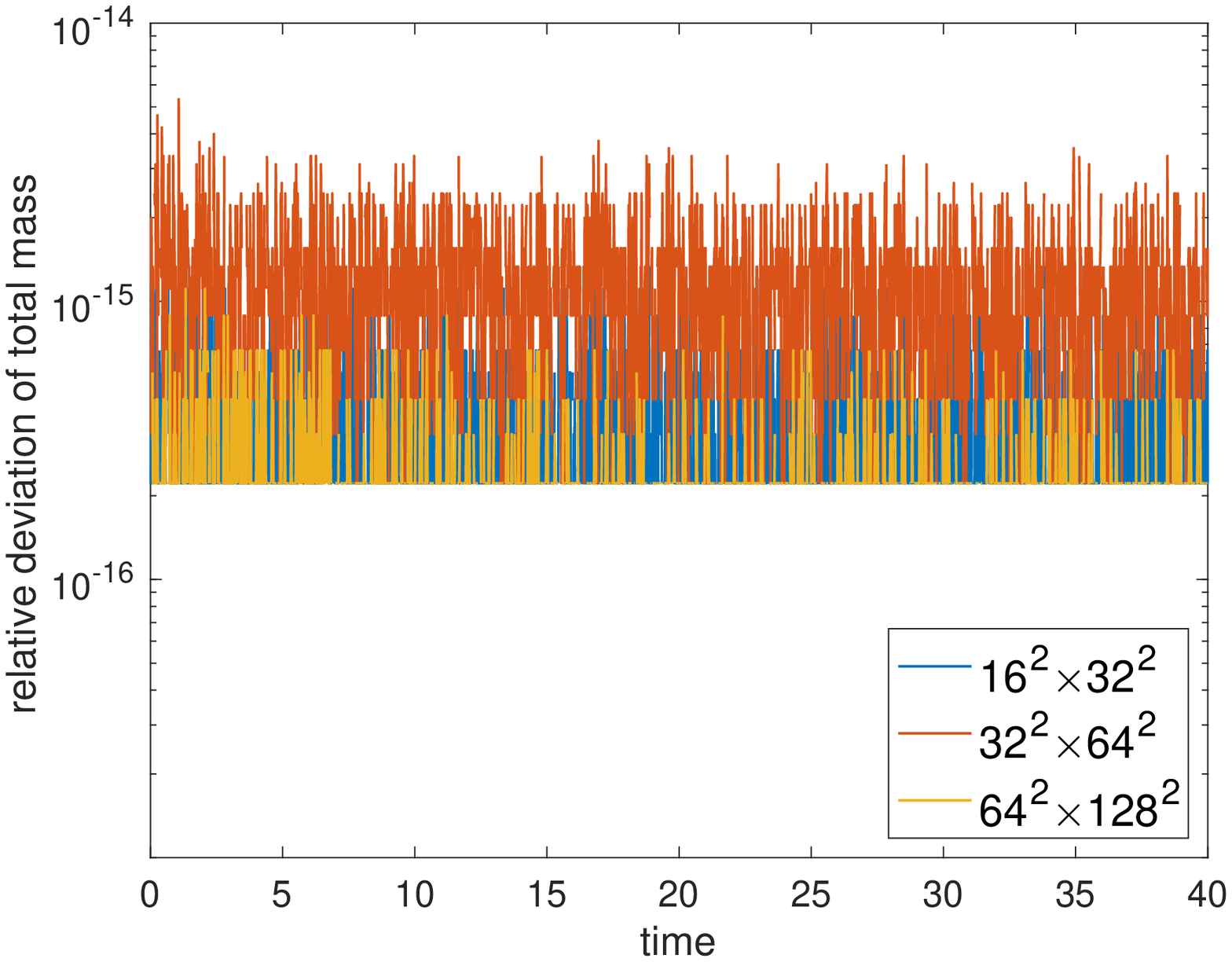}}
		\subfigure[]{\includegraphics[height=40mm]{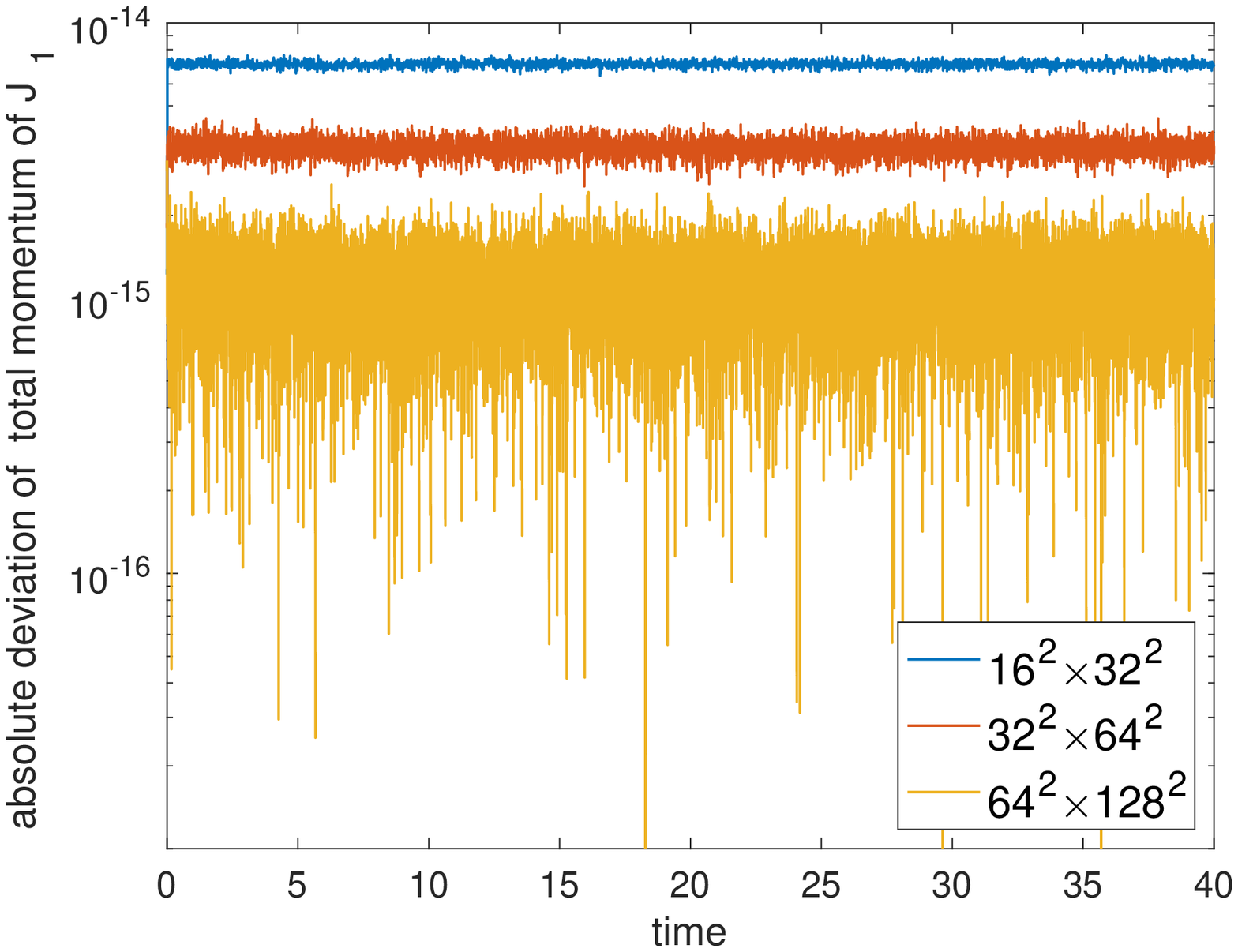}}
			\subfigure[]{\includegraphics[height=40mm]{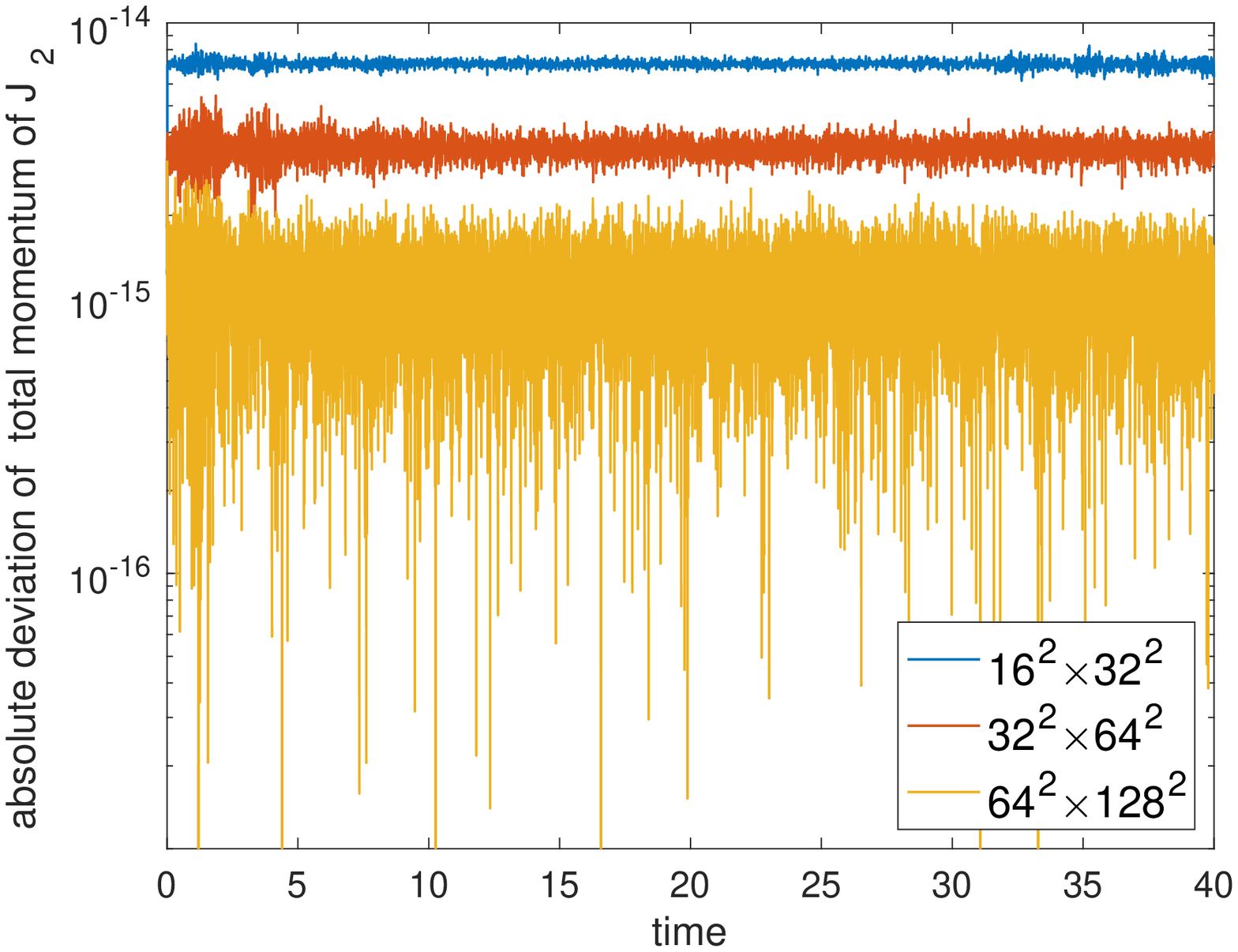}}
		\subfigure[]{\includegraphics[height=40mm]{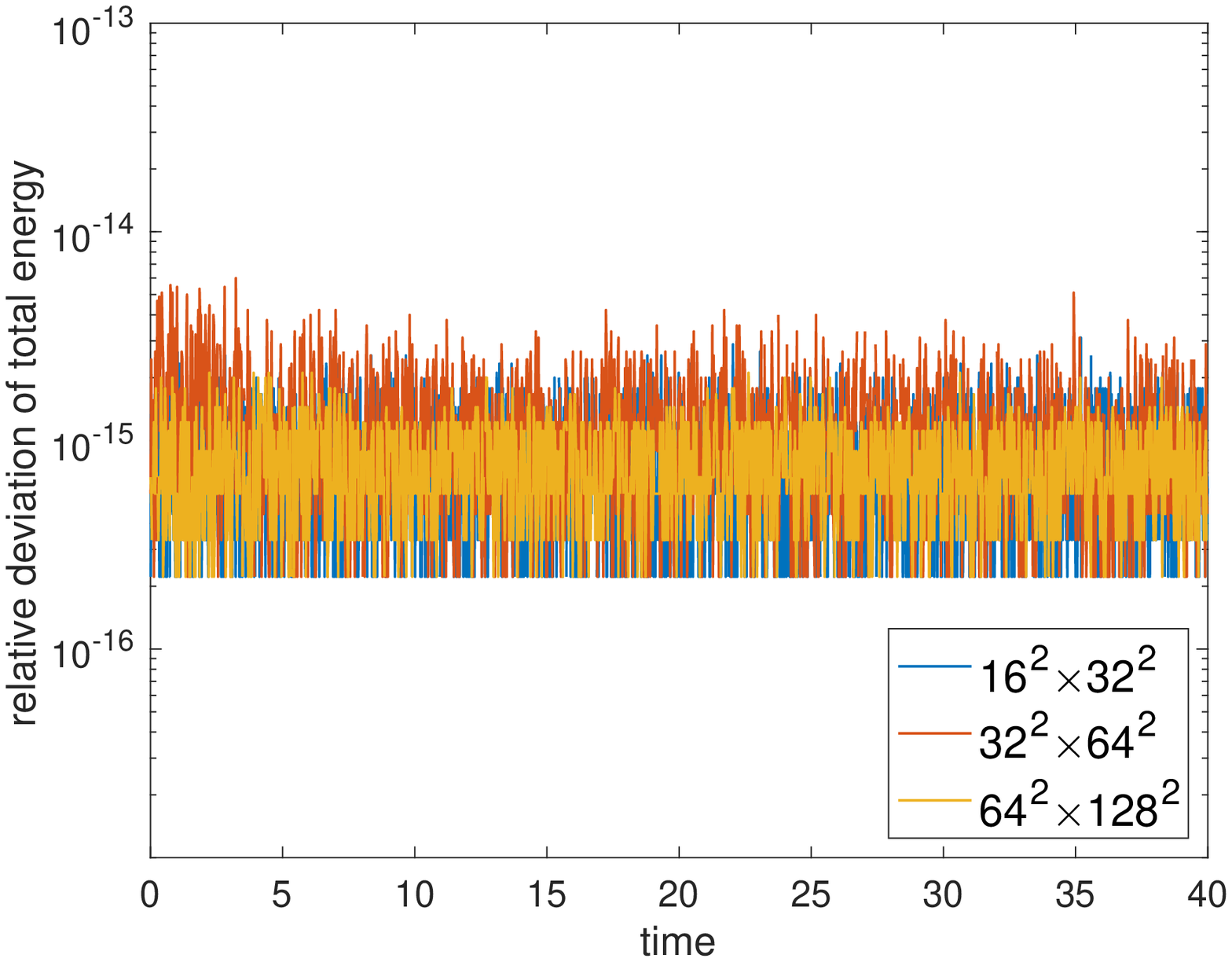}}
	\caption{Example \ref{ex:weak2d}. The time evolution of  electric energy (a), hierarchical ranks of the numerical solution of mesh size $N^2_x\times N^2_v=64^2\times128^2$ (b), relative deviation of total mass (c), absolute total momentum $J_1$ (d), absolute total momentum $J_2$ (e), and relative deviation of total energy (f). $\varepsilon=10^{-4}$. $k=1$. }
	\label{fig:weak2d_elec_con_k1}
\end{figure}

\begin{figure}[h!]
	\centering
	\subfigure[]{\includegraphics[height=40mm]{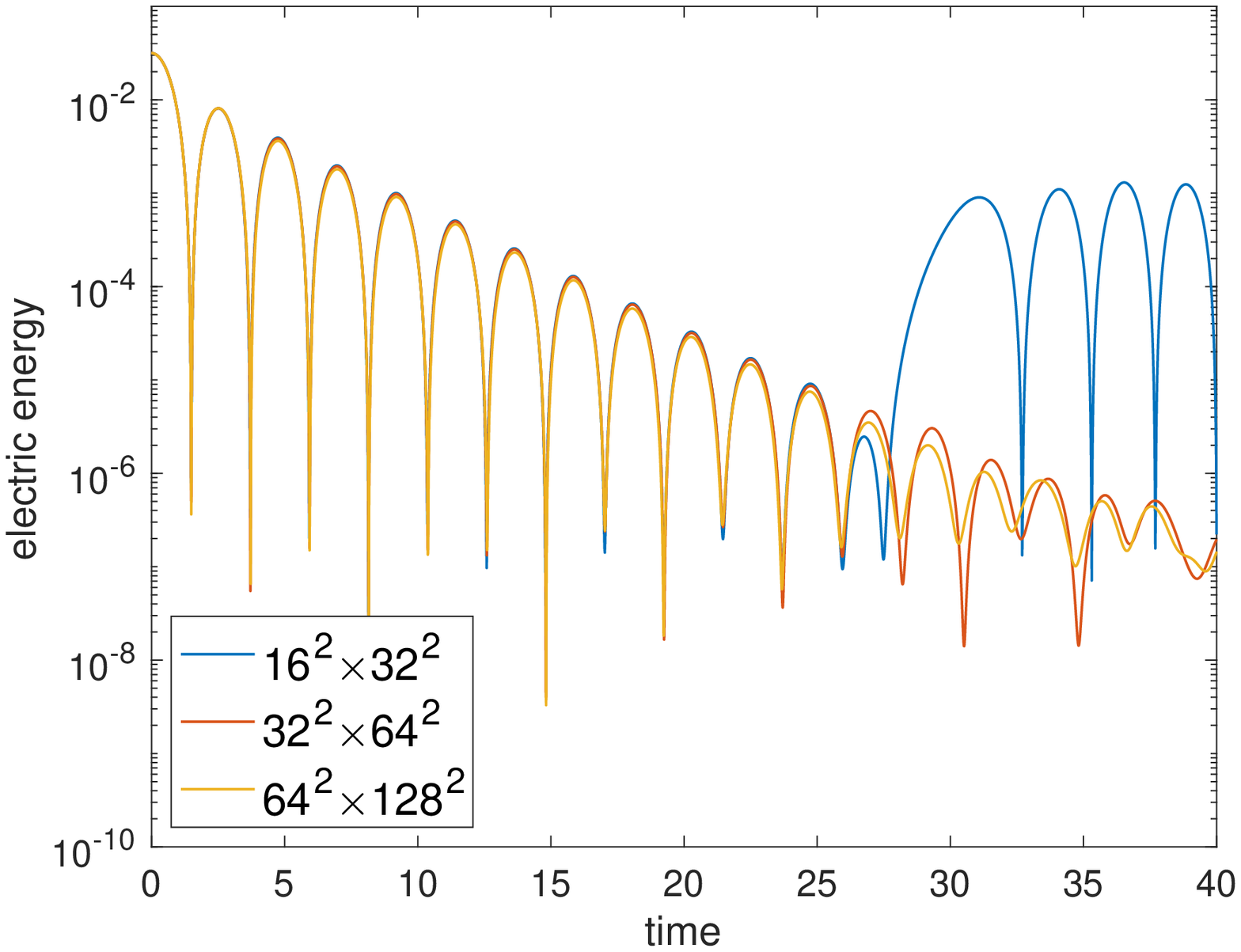}}
		\subfigure[]{\includegraphics[height=40mm]{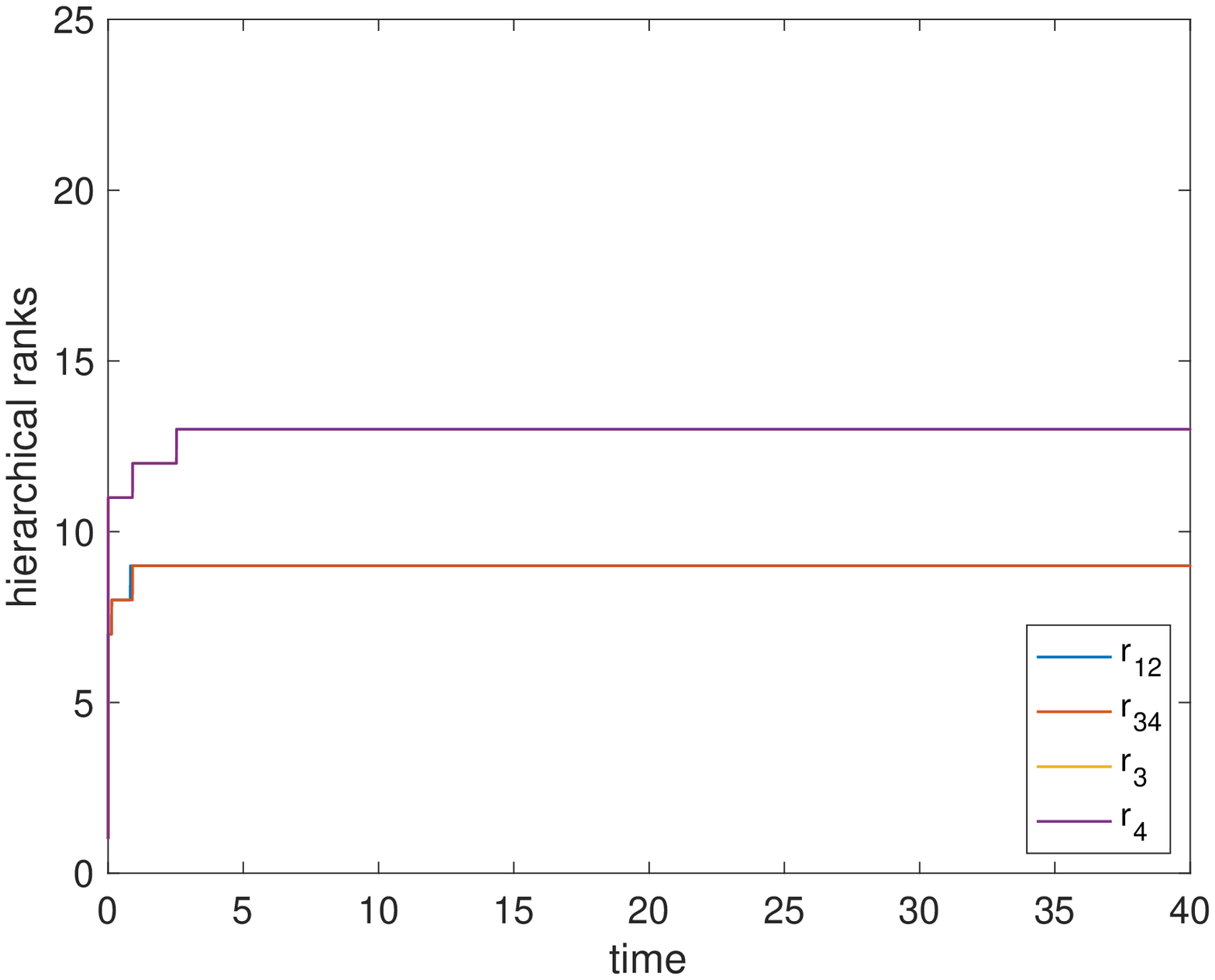}}
		\subfigure[]{\includegraphics[height=40mm]{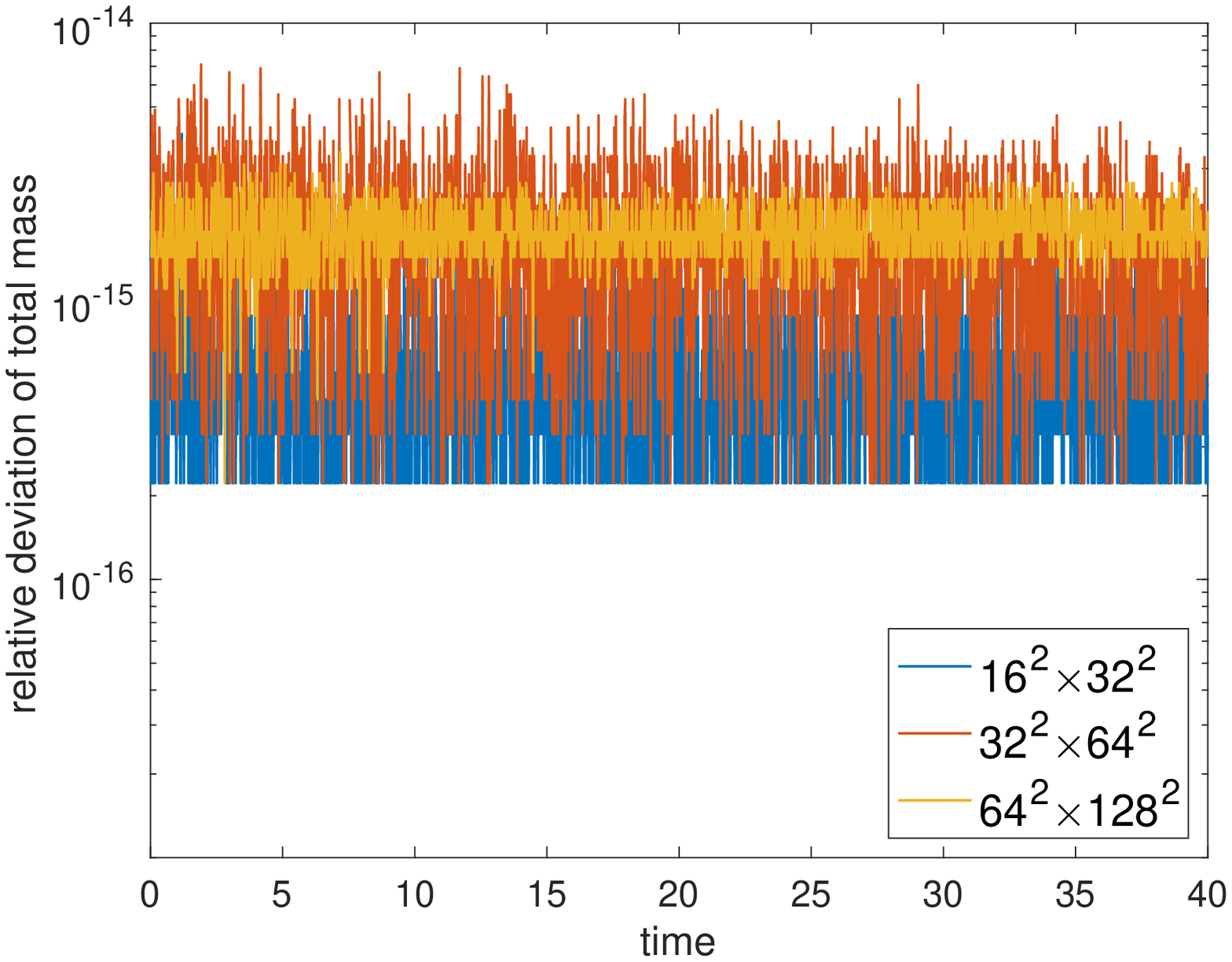}}
		\subfigure[]{\includegraphics[height=40mm]{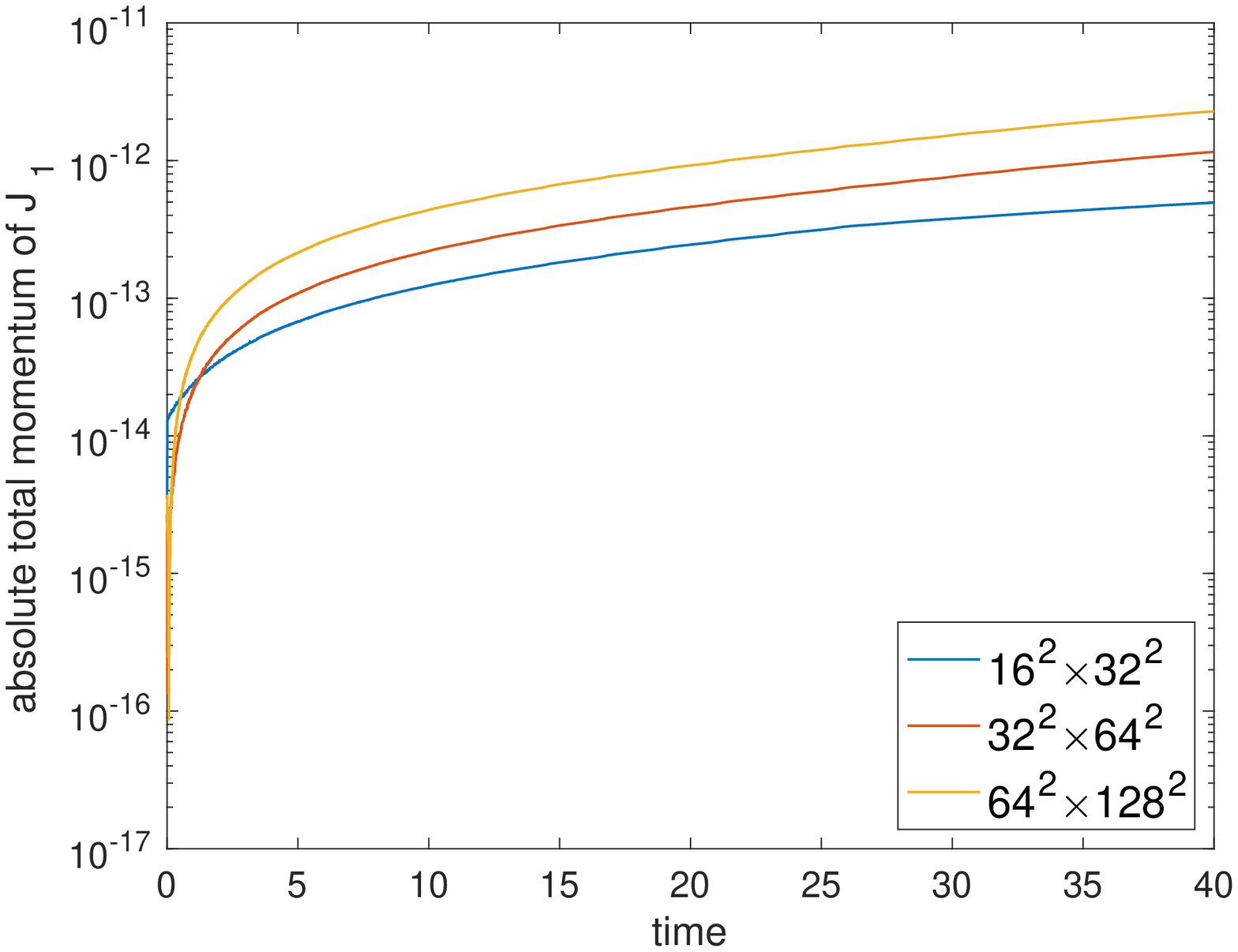}}
			\subfigure[]{\includegraphics[height=40mm]{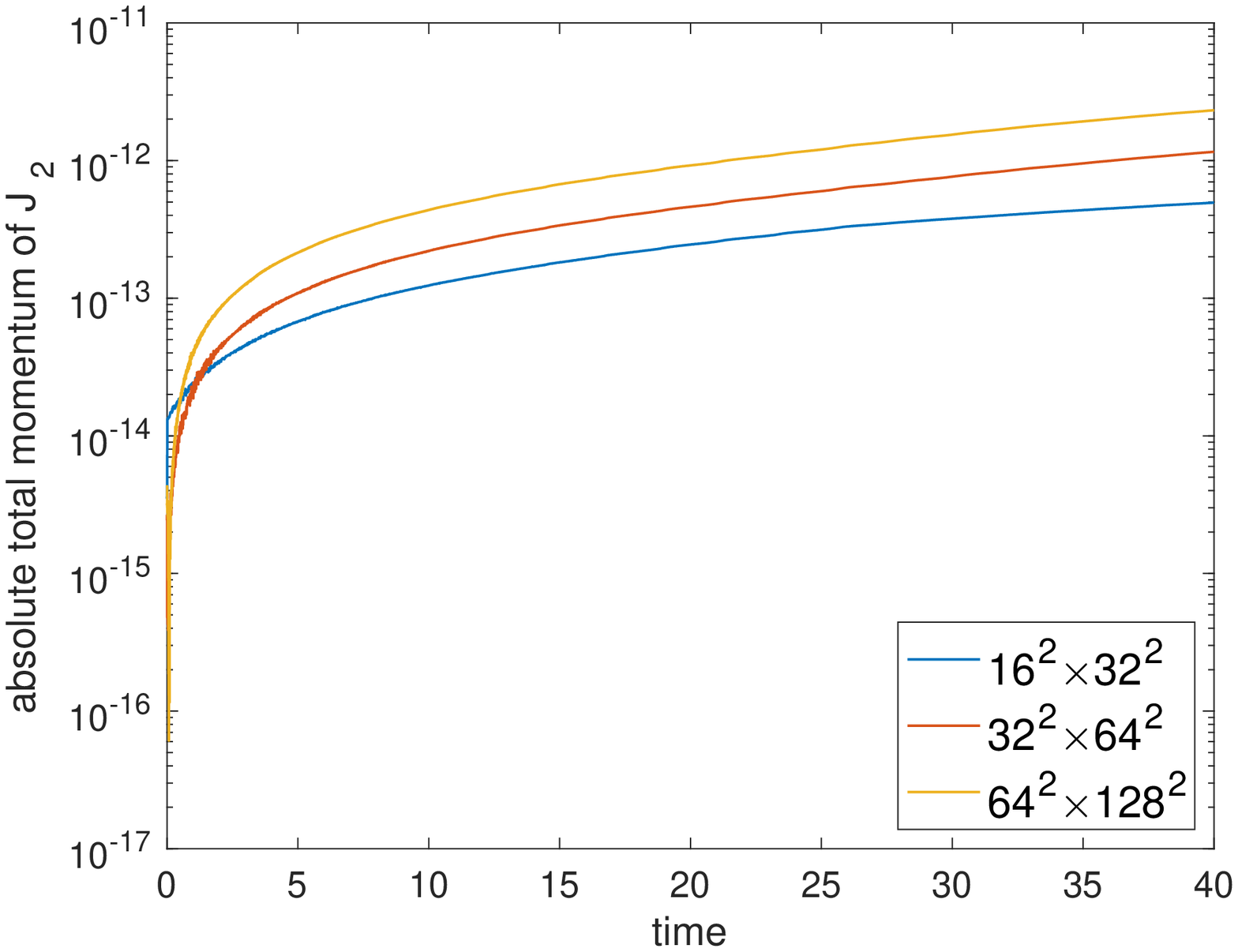}}
		\subfigure[]{\includegraphics[height=40mm]{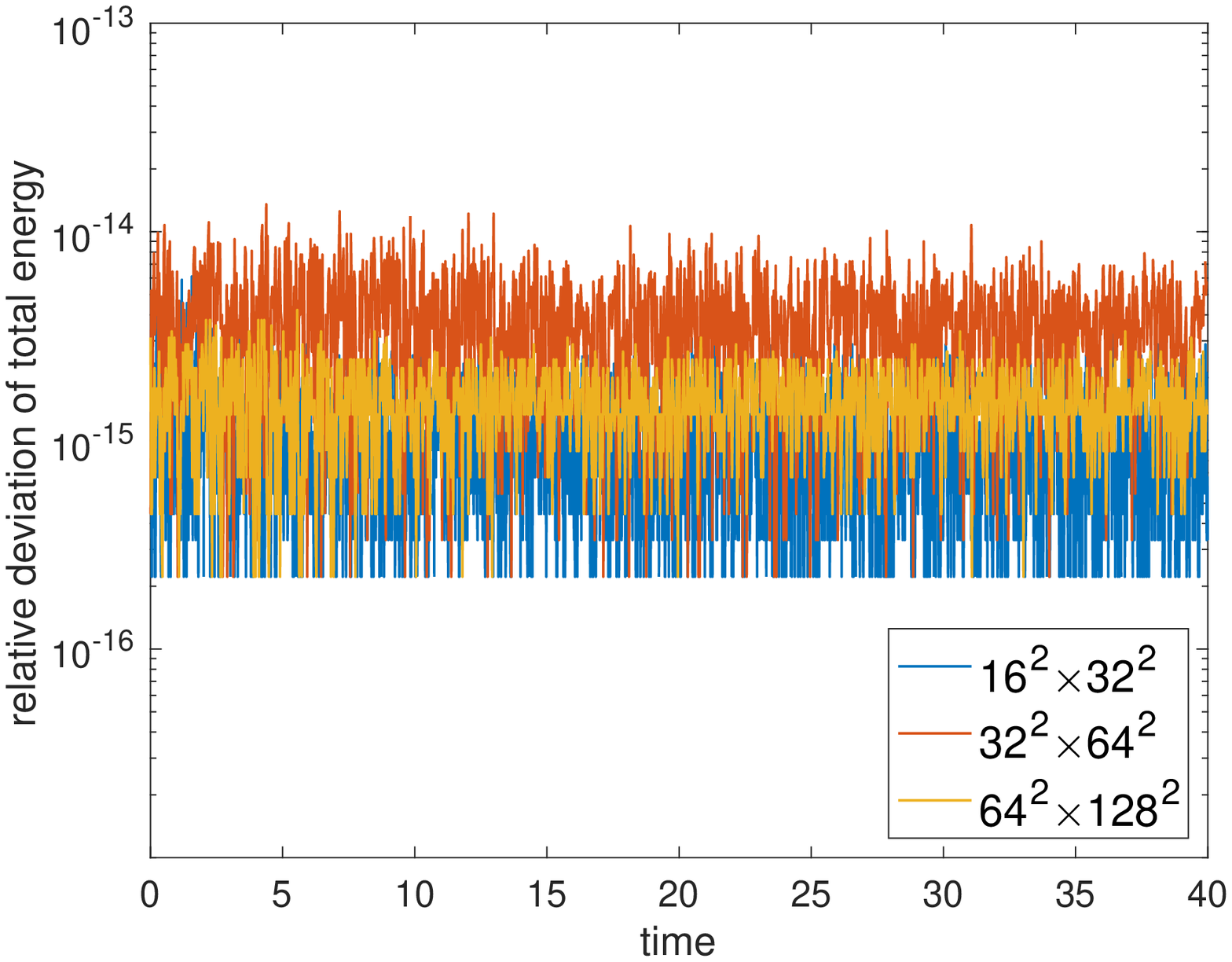}}
	\caption{Example \ref{ex:weak2d}. The time evolution of  electric energy (a), hierarchical ranks of the numerical solution of mesh size $N^2_x\times N^2_v=64^2\times128^2$ (b), relative deviation of total mass (c), absolute total momentum $J_1$ (d), absolute total momentum $J_2$ (e), and relative deviation of total energy (f). $\varepsilon=10^{-4}$. $k=2$. }
	\label{fig:weak2d_elec_con_k2}
\end{figure}

 \end{exa}

 \begin{exa} \label{ex:two2d}(Two-stream instability.) The last example is the 2D2V two-stream instability with initial condition
\begin{equation}
	\label{eq:two2d}
	f(\bx,\bv,t=0) =\frac{1}{2^d(2 \pi)^{d / 2}} \left(1+\alpha \sum_{m=1}^{d} \cos \left(k x_{m}\right)\right)\prod_{m=1}^d\left(\exp\left(-\frac{(v_m-v_0)^2}{2}\right) + \exp\left( -\frac{(v_m+v_0)^2}{2}\right)\right),
\end{equation}
where $d=2$, $\alpha=10^{-3}$, $v_0=2.4$, and $k=0.2$.  The computation domain is set as $[0,L_x]^2\times[-L_v,L_v]^2$, where $L_x=\frac{2\pi}{k}$ and $L_v=8$, and  the truncation threshold is $\varepsilon=10^{-4}$.  In Figure \ref{fig:two2d_elec_con_k1}-\ref{fig:two2d_elec_con_k2},  we report the time evolution of the electric energy, hierarchical ranks of the numerical solution of mesh size  $N_x^2\times N^2_v=128^2\times256^2$,  relative deviation of total mass and energy together with absolute derivation  of total momentum $J_1$ and $J_2$. The results of the electric energy evolution agree with those reported in the literature. In addition, the dynamics is efficiently captured by the low rank DG method, observing that the hierarchical ranks of the solution remains very low until  approximately $t=17$ and then start to increase due to the instability developed. Lastly, the proposed method conserves the total mass, momentum, and energy.

\begin{figure}[h!]
	\centering
	\subfigure[]{\includegraphics[height=40mm]{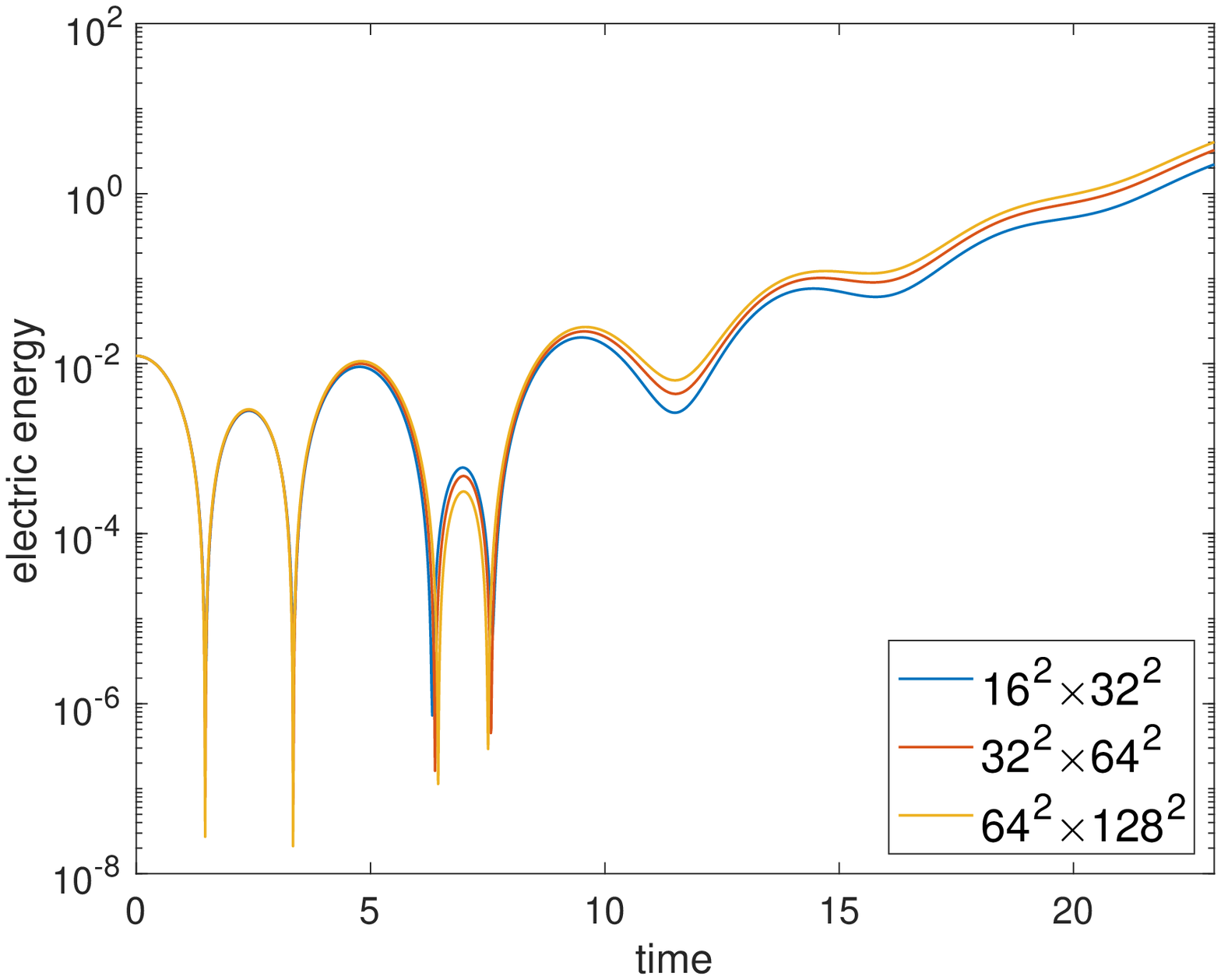}}
		\subfigure[]{\includegraphics[height=40mm]{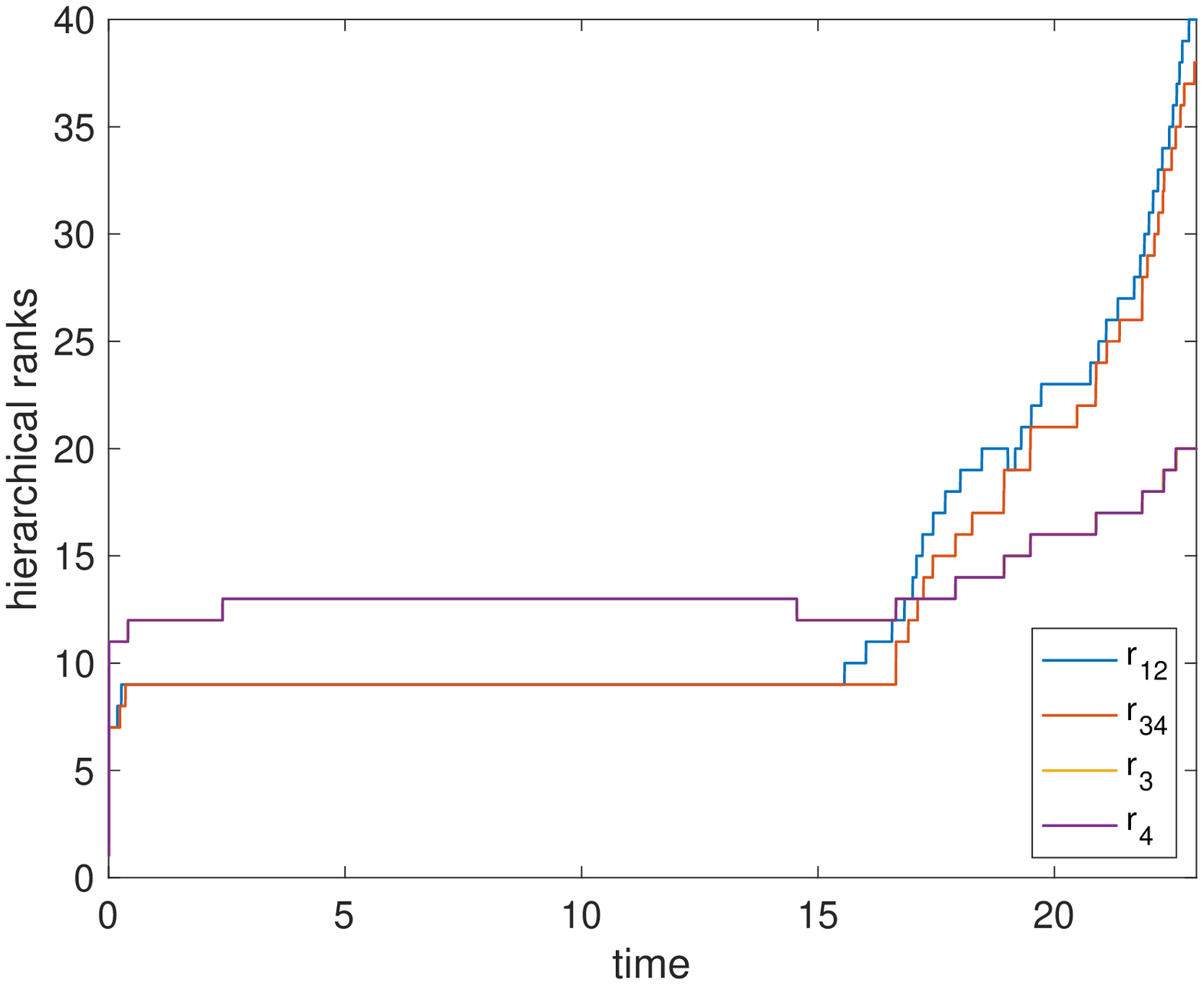}}
		\subfigure[]{\includegraphics[height=40mm]{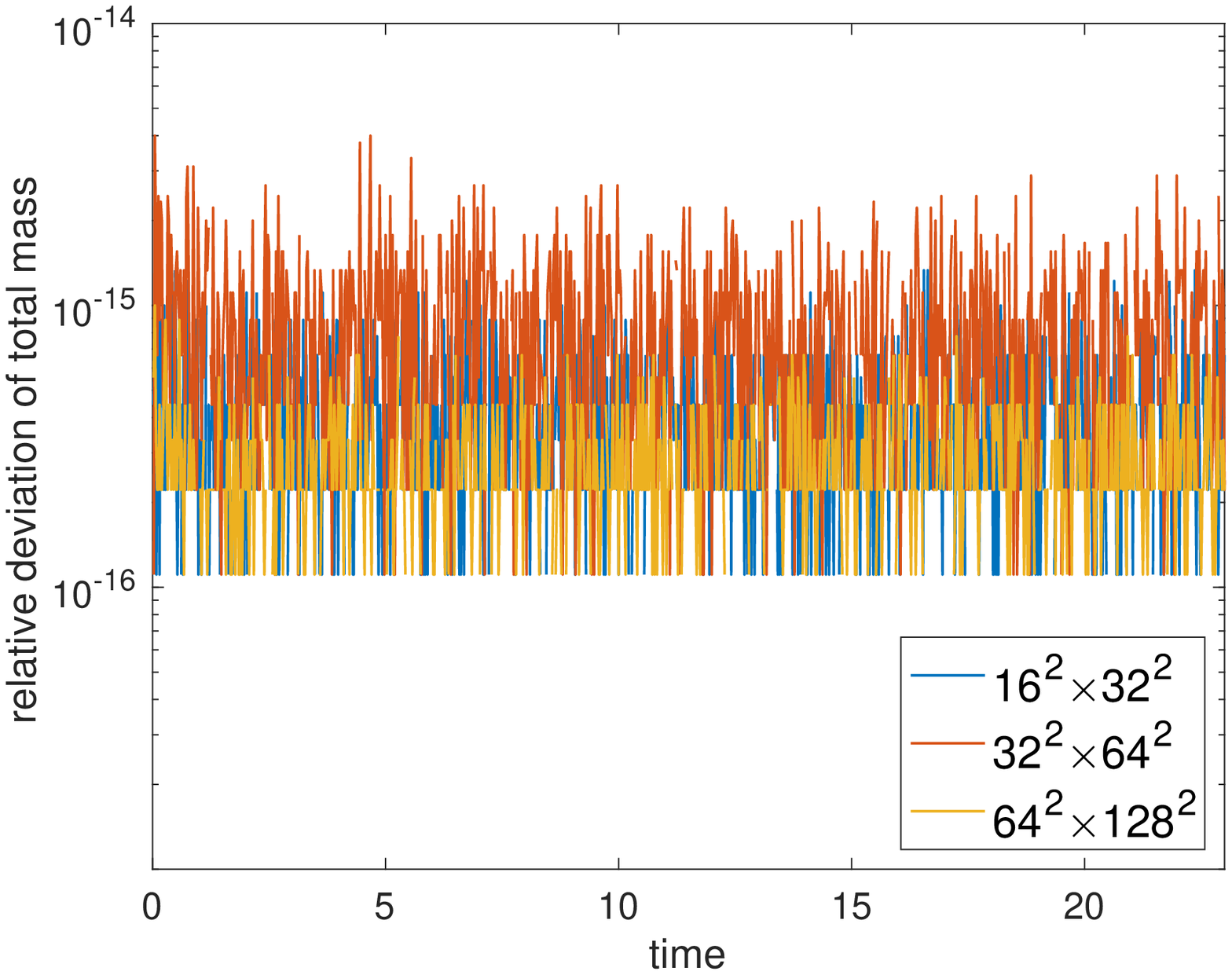}}
		\subfigure[]{\includegraphics[height=40mm]{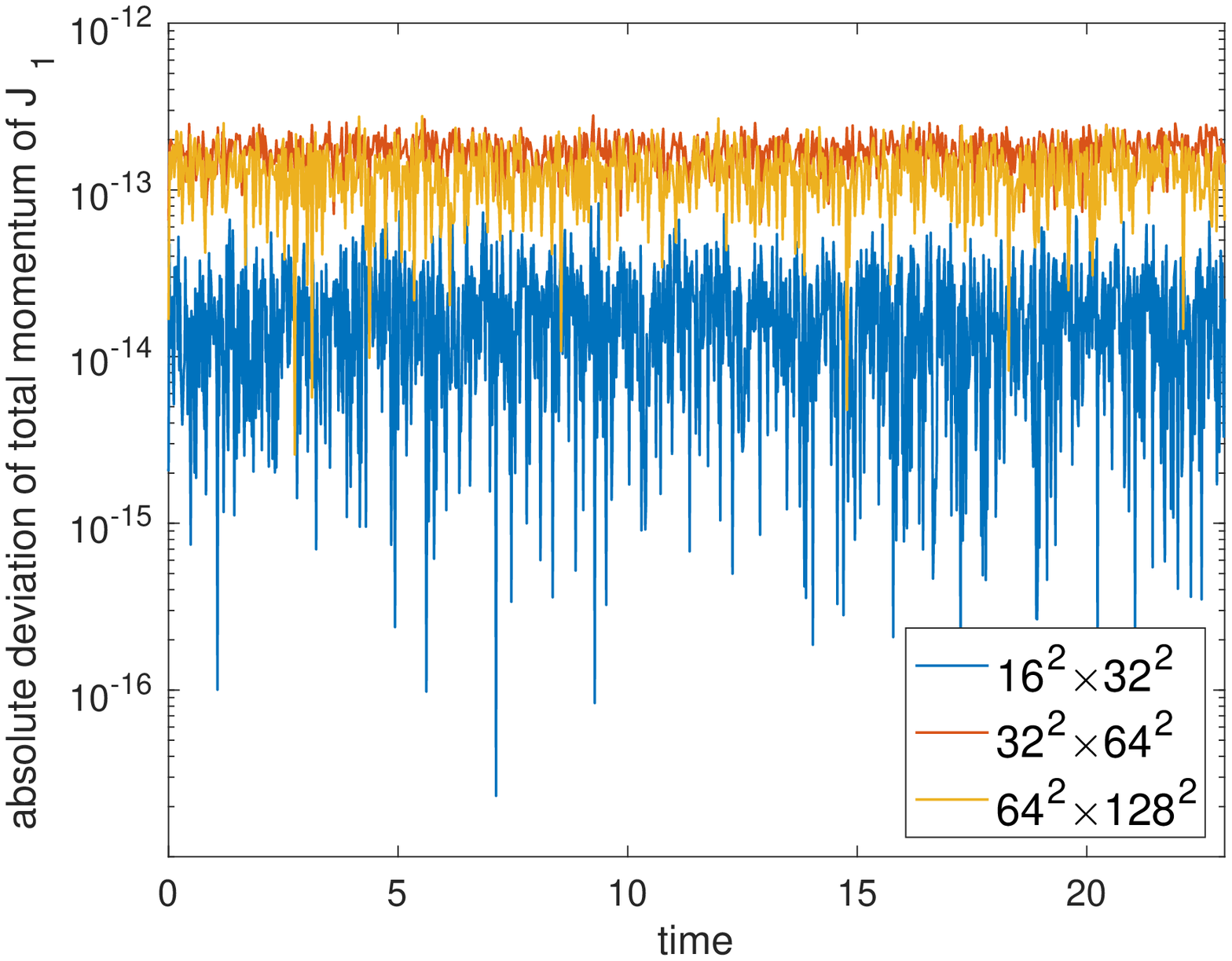}}
			\subfigure[]{\includegraphics[height=40mm]{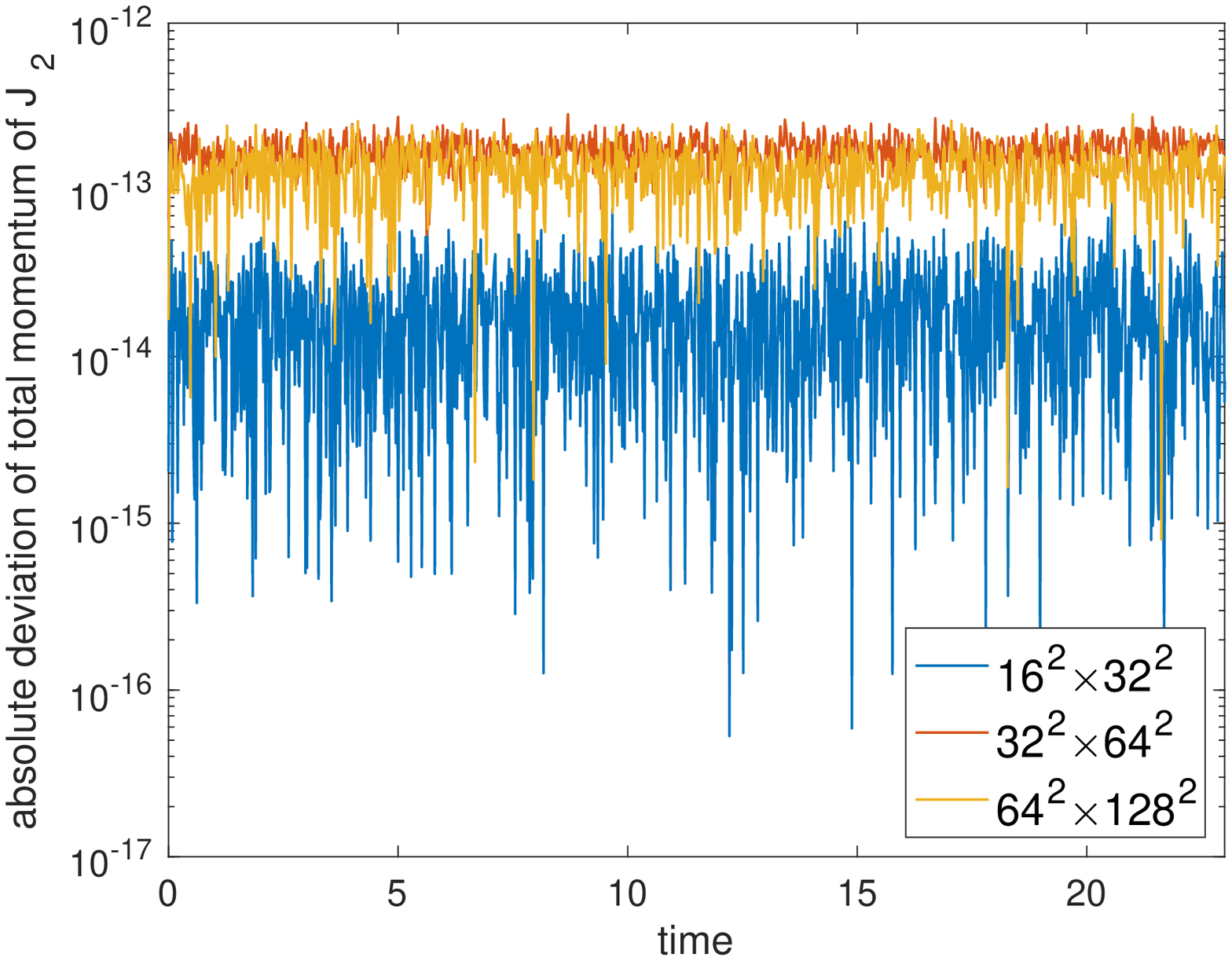}}
		\subfigure[]{\includegraphics[height=40mm]{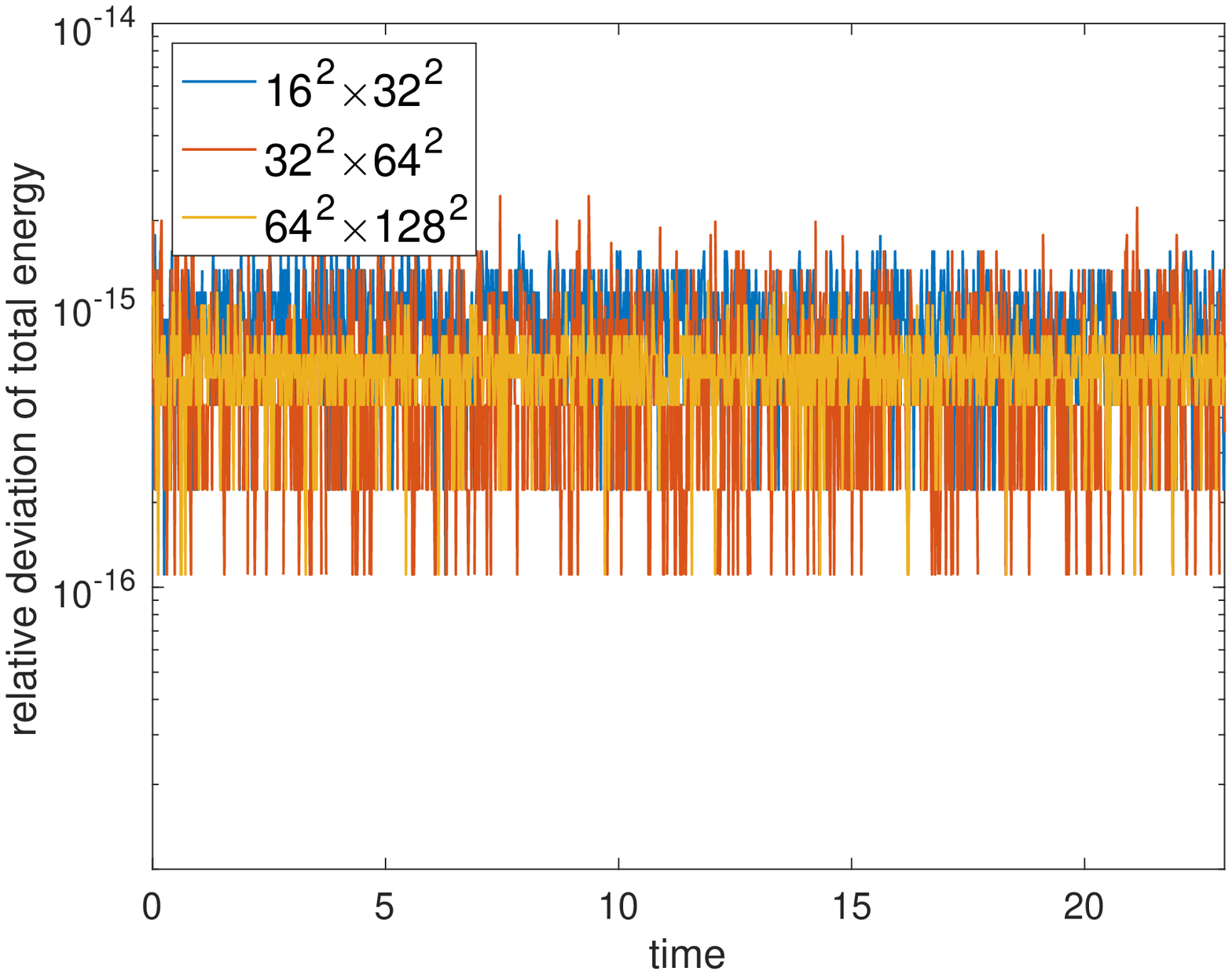}}
	\caption{Example \ref{ex:two2d}. The time evolution of  electric energy (a), hierarchical ranks of the numerical solution of mesh size $N^2_x\times N^2_v=64^2\times128^2$ (b), relative deviation of total mass (c), absolute deviation of total momentum $J_1$ (d) and total momentum $J_2$ (e), and relative deviation of total energy (f). $\varepsilon=10^{-5}$. $k=1$. }
	\label{fig:two2d_elec_con_k1}
\end{figure}

\begin{figure}[h!]
	\centering
	\subfigure[]{\includegraphics[height=40mm]{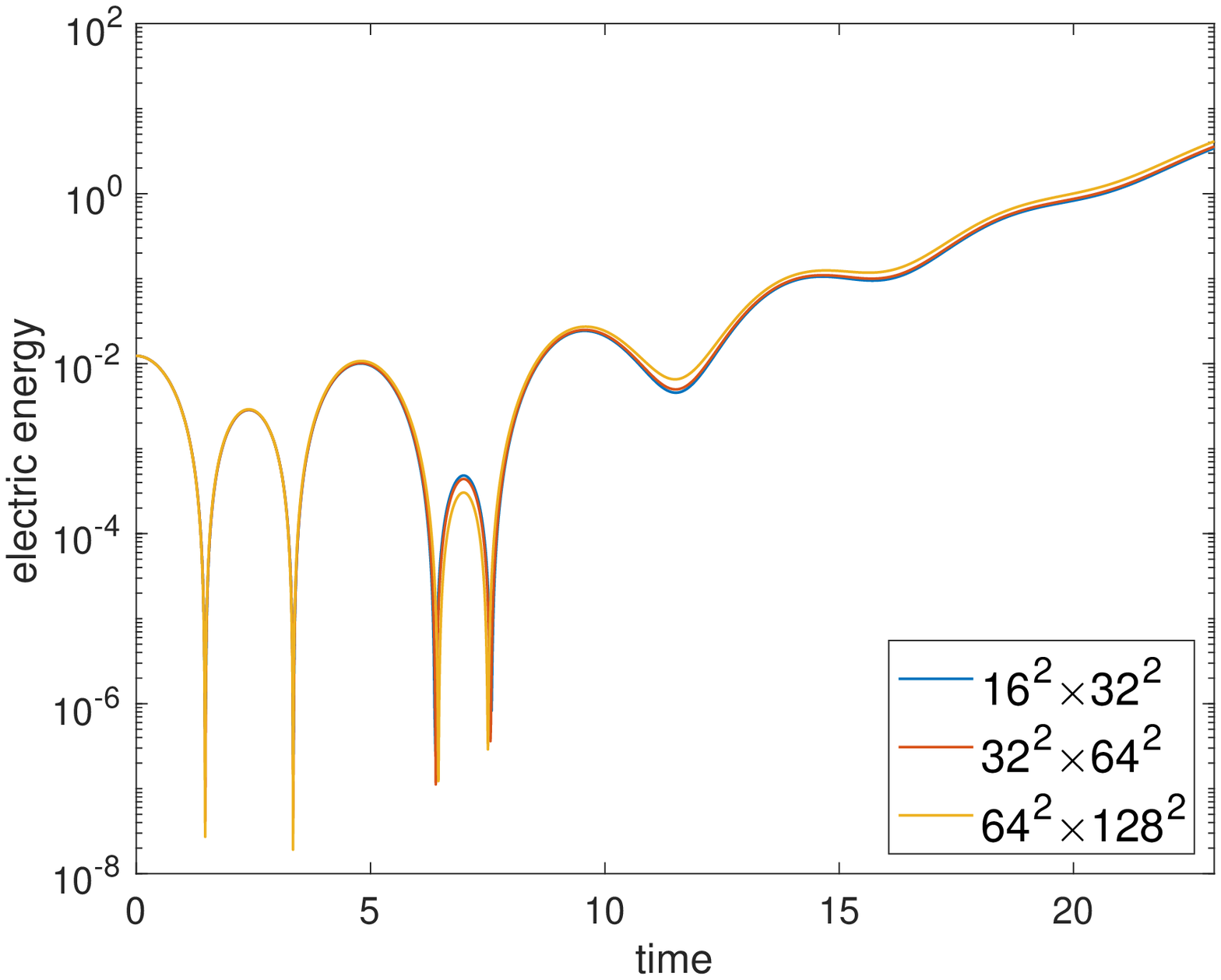}}
		\subfigure[]{\includegraphics[height=40mm]{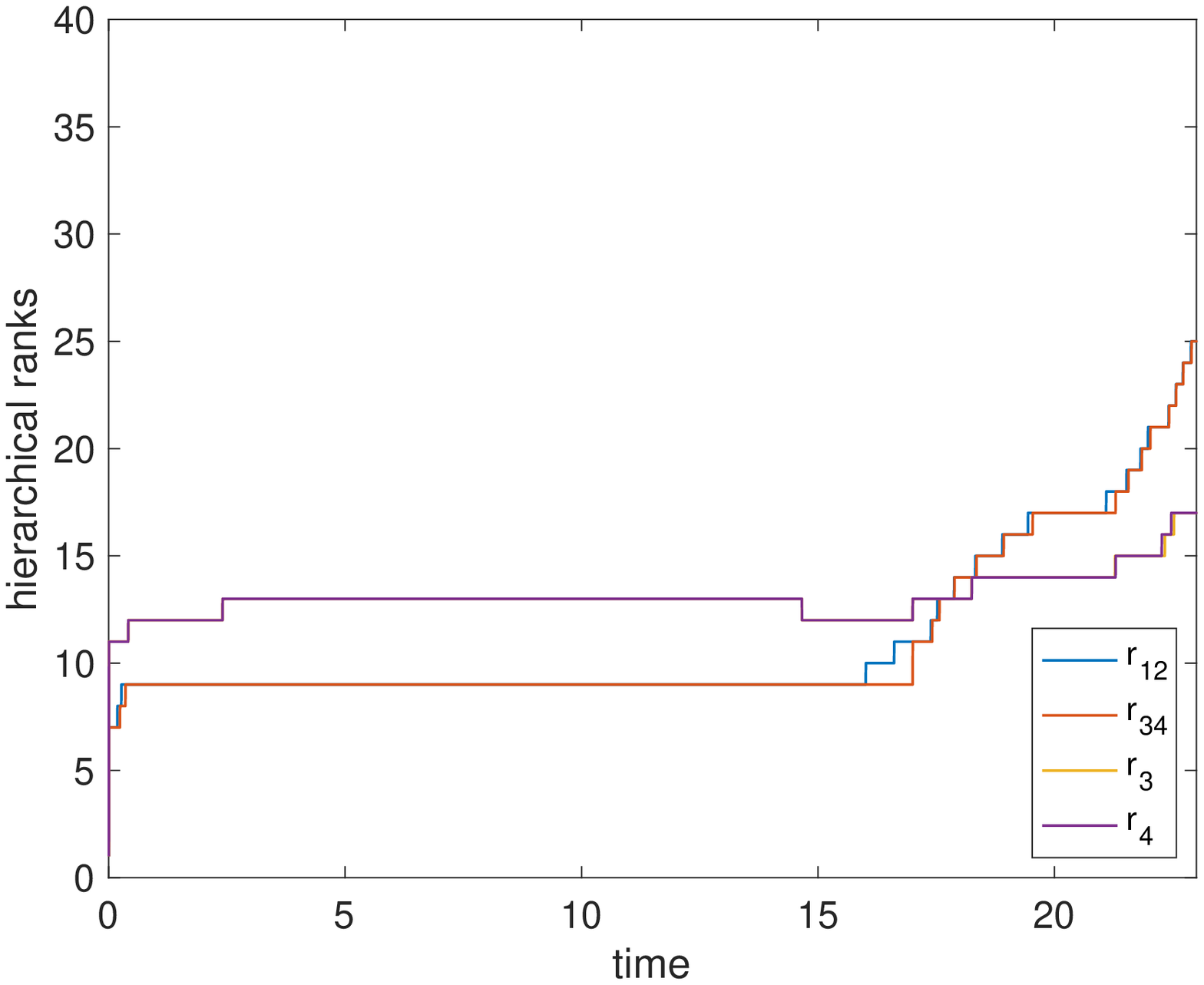}}
		\subfigure[]{\includegraphics[height=40mm]{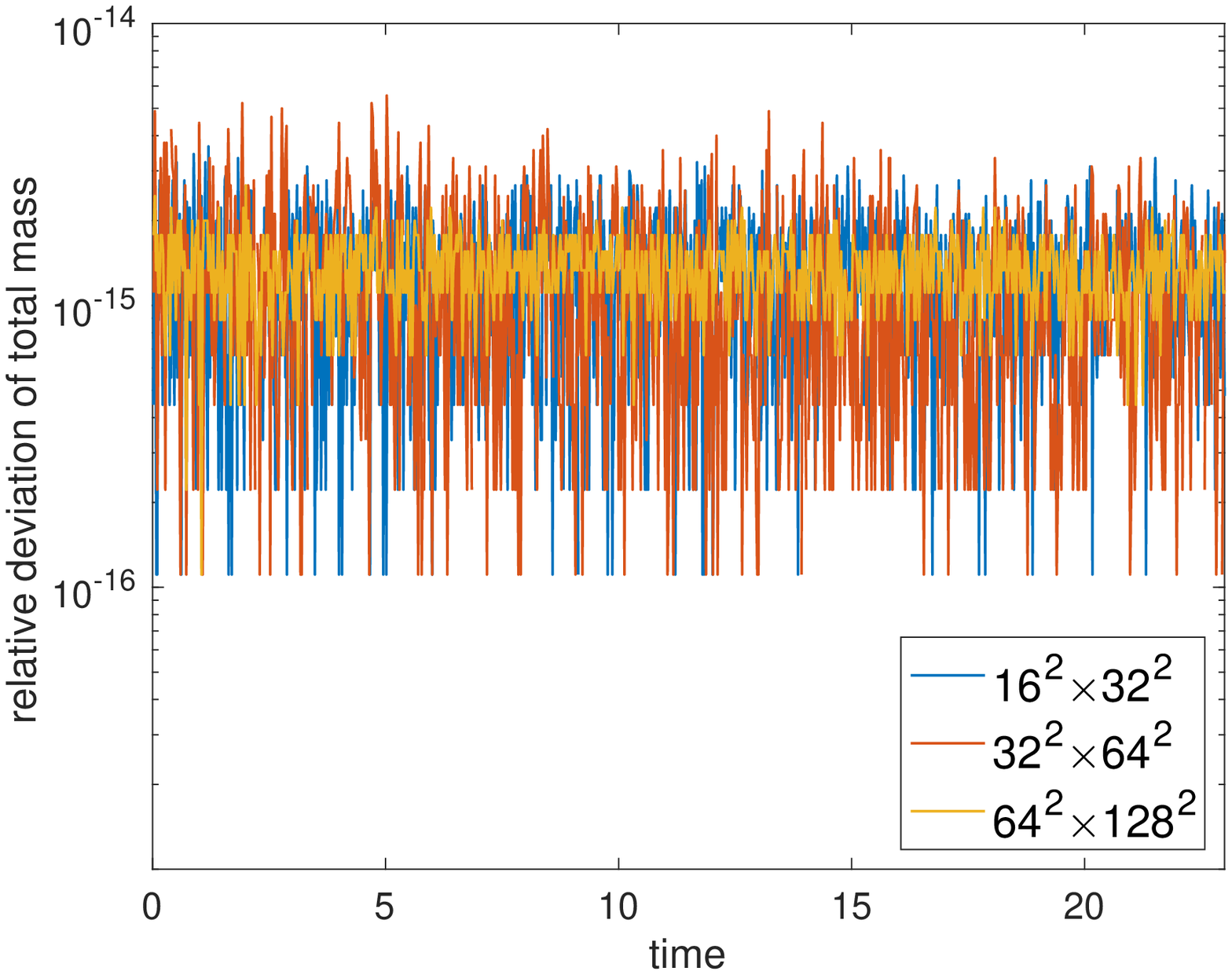}}
		\subfigure[]{\includegraphics[height=40mm]{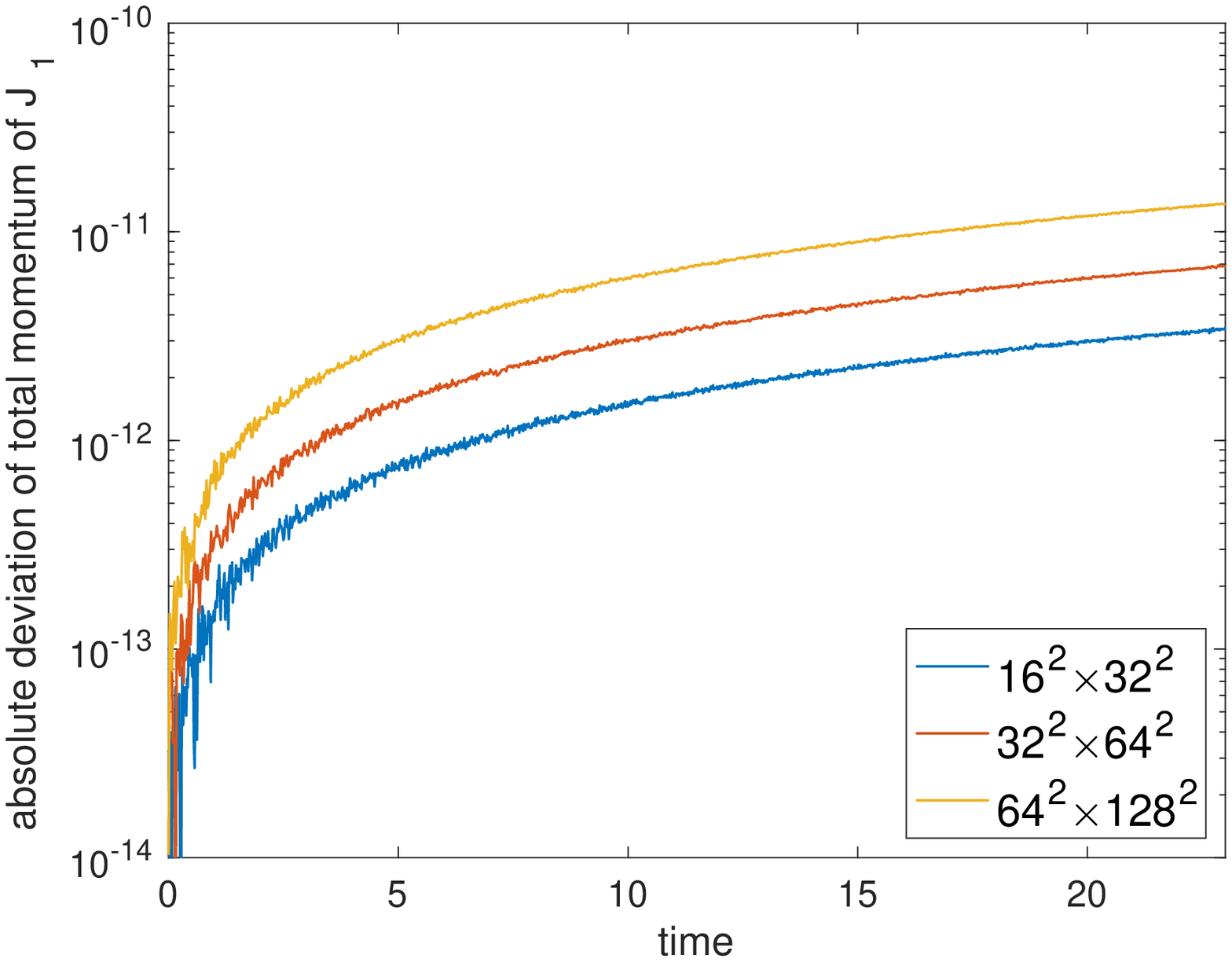}}
			\subfigure[]{\includegraphics[height=40mm]{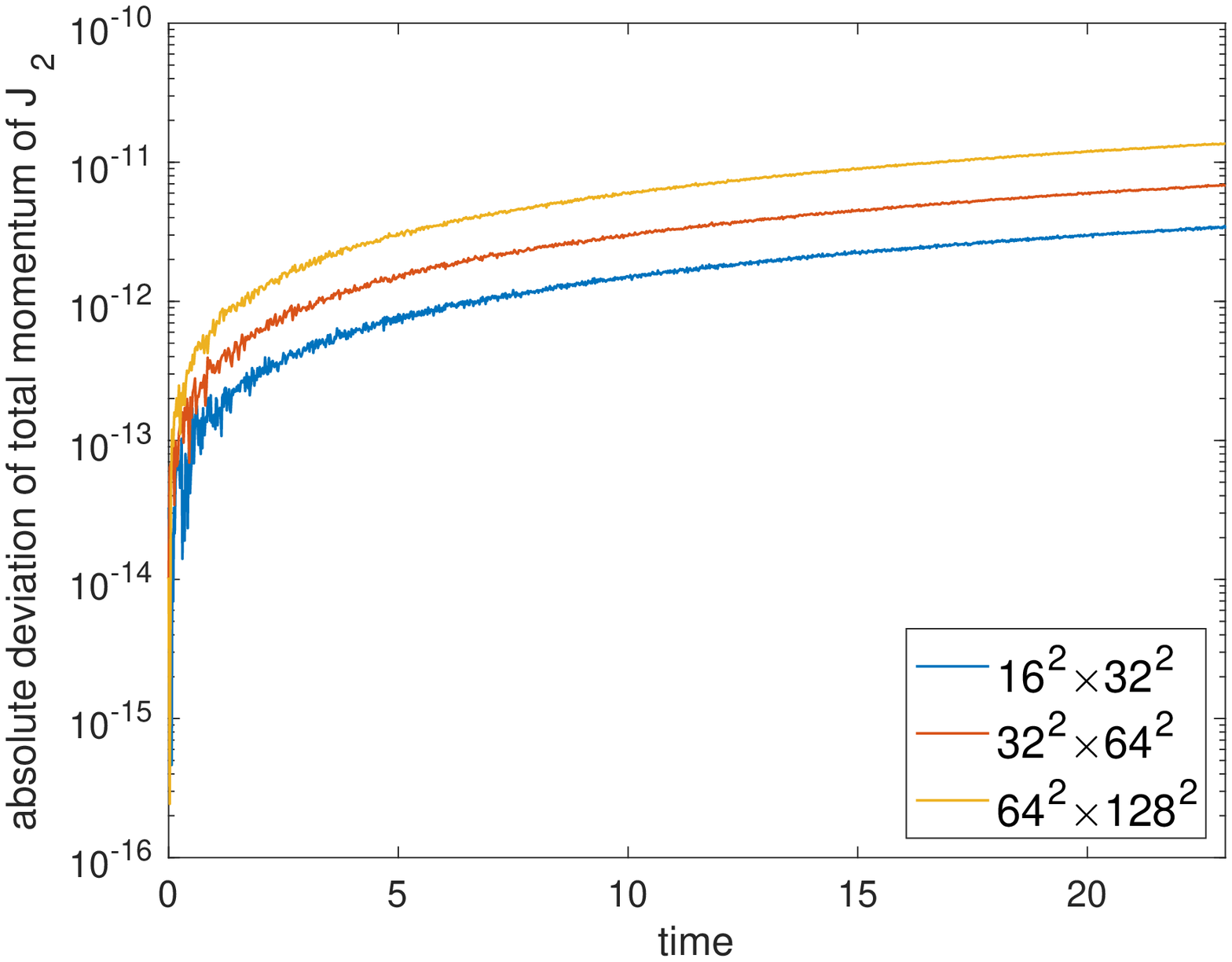}}
		\subfigure[]{\includegraphics[height=40mm]{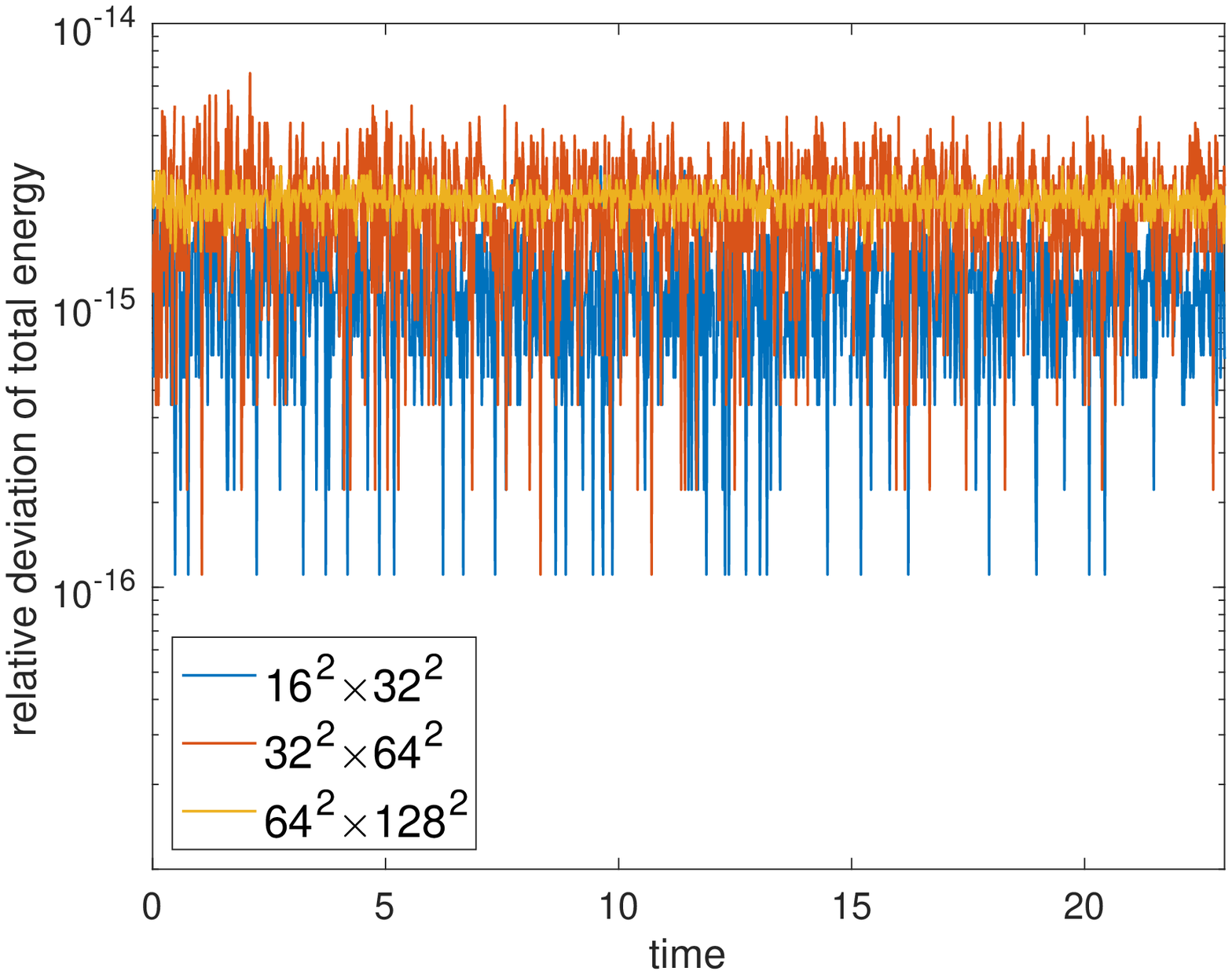}}
	\caption{Example \ref{ex:two2d}. The time evolution of  electric energy (a), hierarchical ranks of the numerical solution of mesh size $N^2_x\times N^2_v=64^2\times128^2$ (b), relative deviation of total mass (c), absolute deviation of total momentum $J_1$ (d) and total momentum $J_2$ (e), and relative deviation of total energy (f). $\varepsilon=10^{-5}$. $k=2$. }
	\label{fig:two2d_elec_con_k2}
\end{figure}

%
%

 \end{exa}

\section{Conclusion}
\setcounter{equation}{0}
\setcounter{figure}{0}
\setcounter{table}{0}

In this paper, we proposed a LoMaC low rank tensor approach with nodal DG discretization for performing high dimensional deterministic Vlasov simulations.
The introduction of DG and nodal DG discretization opens up the potential of low rank tensor algorithm in using general nonsmooth, nonuniform or unstructured meshes and for handling complex boundary conditions. The locally macroscopic conservation property, realized by a macroscopic conservative projection and correction of the kinetic solution, preserves globally total mass, momentum and energy at the discrete level using an explicit scheme. 
The algorithm is extended to the 2D2V VP system by a hierarchical Tucker structure with full rank (no reduction) in the physical space and low rank reduction for the phase space as well as for the linkage between phase and physical spaces. Further work includes the extension to unstructured mesh and in resolving complex boundary conditions arise from applications.

\bigskip
\noindent
{\bf Conflict of interest statement:} 
on behalf of all authors, the corresponding author states that there is no conflict of interest.

\bibliographystyle{abbrv}
\bibliography{refer,ref_guo,ref_cheng,ref_cheng_2}

\begin{thebibliography}{10}

\bibitem{arnold2002unified}
D.~N. Arnold, F.~Brezzi, B.~Cockburn, and L.~D. Marini.
\newblock {Unified analysis of discontinuous Galerkin methods for elliptic
  problems}.
\newblock {\em SIAM journal on numerical analysis}, 39(5):1749--1779, 2002.

\bibitem{cockburn2003enhanced}
B.~Cockburn, M.~Luskin, C.-W. Shu, and E.~S{\"u}li.
\newblock Enhanced accuracy by post-processing for finite element methods for
  hyperbolic equations.
\newblock {\em Mathematics of Computation}, 72(242):577--606, 2003.

\bibitem{de2012high}
B.~A. de~Dios and S.~Hajian.
\newblock {High order and energy preserving discontinuous Galerkin methods for
  the Vlasov-Poisson system}.
\newblock {\em arXiv preprint arXiv:1209.4025}, 2012.

\bibitem{de2000multilinear}
L.~De~Lathauwer, B.~De~Moor, and J.~Vandewalle.
\newblock A multilinear singular value decomposition.
\newblock {\em SIAM J. Matrix Anal. Appl.}, 21(4):1253--1278, 2000.

\bibitem{gottlieb2011strong}
S.~Gottlieb, D.~I. Ketcheson, and C.-W. Shu.
\newblock {\em {Strong stability preserving Runge-Kutta and multistep time
  discretizations}}.
\newblock World Scientific, 2011.

\bibitem{grasedyck2010hierarchical}
L.~Grasedyck.
\newblock Hierarchical singular value decomposition of tensors.
\newblock {\em SIAM J. Matrix Anal. Appl.}, 31(4):2029--2054, 2010.

\bibitem{guo2021lowrank}
W.~Guo and J.-M. Qiu.
\newblock A low rank tensor representation of linear transport and nonlinear
  vlasov solutions and their associated flow maps.
\newblock {\em arXiv preprint arXiv:2106.08834}, 2021.

\bibitem{guo2022lowrank}
W.~Guo and J.-M. Qiu.
\newblock A conservative low-rank tensor method for the vlasov dynamics.
\newblock {\em arXiv preprint arXiv:2106.08834}, 2022.

\bibitem{guo2022local}
W.~Guo and J.-M. Qiu.
\newblock A local macroscopic conservative (lomac) low rank tensor method for
  the vlasov dynamics.
\newblock {\em arXiv preprint arXiv:2207.00518}, 2022.

\bibitem{guo2022low}
W.~Guo and J.-M. Qiu.
\newblock A low rank tensor representation of linear transport and nonlinear
  vlasov solutions and their associated flow maps.
\newblock {\em Journal of Computational Physics}, 458:111089, 2022.

\bibitem{hackbusch2009new}
W.~Hackbusch and S.~K{\"u}hn.
\newblock A new scheme for the tensor representation.
\newblock {\em J. Fourier Anal. Appl.}, 15(5):706--722, 2009.

\bibitem{mandal1994kinetic}
J.~Mandal and S.~Deshpande.
\newblock Kinetic flux vector splitting for euler equations.
\newblock {\em Computers \& fluids}, 23(2):447--478, 1994.

\bibitem{tucker1966some}
L.~R. Tucker.
\newblock Some mathematical notes on three-mode factor analysis.
\newblock {\em Psychometrika}, 31(3):279--311, 1966.

\bibitem{xu1995gas}
K.~Xu, L.~Martinelli, and A.~Jameson.
\newblock Gas-kinetic finite volume methods, flux-vector splitting, and
  artificial diffusion.
\newblock {\em Journal of computational physics}, 120(1):48--65, 1995.

\end{thebibliography}

\end{document}